\newtheorem{definition}{Definition}
\newtheorem{lemma}{Lemma}
\newtheorem{fact}{Fact}
\newtheorem{corollary}{Corollary}
\newtheorem{proposition}{Proposition}
\newtheorem{theorem}{Theorem}
\def\logic{{\sf Ev}}
\def\pl{{\langle\peq\rangle}}
\def\nl{{[\peq]}}
\def\nc{{[E]}}
\def\ps{{\langle E\rangle}}
\def\<{\langle}
\def\>{\rangle}
\def\peq{\preccurlyeq}
\def\seq{\succcurlyeq}
\def\L{\mathcal{L}}
 \def\M{\mathcal{M}}
\def\At{\mathsf{At}}
 \newcommand{\mo}[1]{\mathcal{{#1}}}      
 \newcommand{\pmo}[1]{\mathcal{{#1}}}      
 \newcommand{\truth}[2]{[\![{#1}]\!]_{#2}}      
\def\pow{\wp}
\def\lb{\llbracket}
\def\rb{\rrbracket}
\def\nB{{[B]}}
\def\nA{{[A]}}
\def\pB{{\langle B\rangle}}
\def\pA{{\langle A\rangle}}
\def\peq{\preccurlyeq}
\def\seq{\succcurlyeq}
\def\cbra{\left \{}
\def\cket{\right \}}
\DeclareSymbolFont{AMSb}{U}{msb}{m}{n}
 \DeclareMathSymbol{\Z}{\mathbin}{AMSb}{"5A}
\DeclareMathSymbol{\R}{\mathbin}{AMSb}{"52}
\DeclareMathSymbol{\Q}{\mathbin}{AMSb}{"51}
\DeclareMathSymbol{\I}{\mathbin}{AMSb}{"49}
\def\X{\mathcal{X}}
\def\phi{\varphi}
\def\E{\mathcal{E}}
\begin{document}



\title{Evidence and plausibility in neighborhood structures}


\author{Johan van Benthem\\
University of Amsterdam\\\\
David Fern\'{a}ndez-Duque\\
Universidad de Sevilla\\\\
Eric Pacuit\\
University of Maryland}
\maketitle

\begin{abstract}
The intuitive notion of evidence has both semantic and syntactic features. In this paper, we develop an {\em evidence logic} for epistemic agents faced with possibly contradictory evidence from different sources. The logic is based on a neighborhood semantics, where a neighborhood $N$ indicates that the agent has reason to believe that the true state of the world lies in $N$. Further notions of relative plausibility between worlds and beliefs based on the latter ordering are then defined in terms of this evidence structure, yielding our intended models for evidence-based beliefs. In addition, we also consider a second more general flavor, where belief and plausibility are modeled using additional primitive relations, and we prove a representation theorem showing that each such general model is a $p$-morphic image of an intended one. This semantics invites a number of natural special cases, depending on how uniform we make the
evidence sets, and how coherent their total structure. We give a structural study of the resulting `uniform' and `flat'  models. Our main result are sound and complete axiomatizations for the logics of all four major model classes with respect to the modal language of evidence, belief and safe belief. We conclude with an outlook toward logics for the dynamics of changing evidence, and the resulting language extensions and connections with logics of plausibility change.
\end{abstract}



\section{Introduction}

It has become standard practice in Artificial Intelligence and Game Theory to use possible-worlds models to describe the knowledge and beliefs of a group of agents.  In such models, the agents' knowledge is based on what is true throughout the set of epistemically accessible   worlds, the current information range.    Following a similar pattern, the agents' beliefs are based on what is true in the set of most ``plausible" worlds.    Now, it is often implicitly assumed that the  agents arrived at these structures through some process of investigation, but these details are no longer present in the models. 

 In a number of areas, ranging from epistemology to computer science and decision theory, the need has been recognized for models that keep track of the ``reasons'', or the {\em evidence} for beliefs and other informational attitudes (cf. \cite{list-dietrich,horty,sep-logic-justification,su1}).     Encoding evidence as the current range of worlds the agent considers possible ignores how the agent arrived at this epistemic state. This also ignores the fine-structure of evidence that allows us to consider or modify just parts of it. One extreme for recording this additional structure are models with complete syntactic details of what the agent has learned so far (including the precise formulation and sources for each piece of evidence) (cf. \cite{johan-fer-synthese}). In this paper, we will explore a middle ground in between ranges of possible worlds and syntactic fine-structure, viz. neighborhood models, where the available evidence is recorded as a family of sets of worlds. 
Our models are not unlike some earlier proposals in the study of conditionals and belief revision (cf.  \cite{kratzer,veltman,gardenfors,rott}), but we quickly strike out in other directions, and provide a more in-depth logical treatment.

This paper is a continuation of our earlier work in \cite{vBP-evidence,vBFP-aiml} on a new evidence interpretation of  neighborhood models.\footnote{Neighborhood models have been used to provide a semantics for both normal and non-normal modal logics.  See \cite{Segerberg} for an early discussion of neighborhood models and their logics, and  \cite{hansen-thesis,pacuit-nbhd,helle-clemens-ep} for modern motivations and mathematical details.} In a neighborhood model, each state is assigned a collection of subsets of the set of states.  We view  these  different collections of subsets    as    the evidence that the agent has acquired -- allowing the agent to have different evidence at different states.    Given such an explicit description of the evidence that the agent has acquired, one can explore different notions of beliefs and related cognitive attitudes over neighborhood models. The logical systems that arise on natural model classes of this sort with modalities for evidence and belief have been axiomatized completely in \cite{vBFP-aiml}. In this paper, we go one step further, and add some further crucial structure to the neighborhood structures.

 In general,  there are two ways we can enrich neighborhood structures with descriptions of the agent's beliefs.  The first approach is to   add  new accessibility relations corresponding to each epistemic or doxastic attitude in  the neighborhood structure.   We then impose constraints on these new relations to ensure that they are ``appropriately grounded'' on the available evidence.   The second approach is to define the agent's beliefs and related cognitive attitudes directly using no more than the given evidence structure in the  neighborhood structures. The latter ``intended models'' may be considered as a special case of the former ``general models''. This paper offers a careful study of these two approaches.

 Our second new contribution is to  elaborate on the relationship between our neighborhood models and another    general framework for belief change in the modal tradition.   Originally used as a semantics for conditionals   \cite[cf.][]{lewis-conditionals},  {\em plausibility models} are wide-spread in modal logics of belief  \cite{vb-logbelrev,vanbenthem-newbook,baltag-smets,girard}.  The main idea is to endow epistemic ranges with  an ordering $w\peq v$ of relative {\em plausibility} on worlds (usually uniform across epistemic equivalence classes): ``(according to the agent) world $v$ is at least as plausible as $w$".\footnote{In conditional semantics, such plausibility or `similarity' orders are typically world-dependent.}   Plausibility orders are typically assumed to be reflexive and transitive, and often even {\em connected}, making every two worlds comparable. Connections between evidence structure and plausibility order will occur throughout this paper, and their reflection in logical axioms will be determined. 
 
In all, we shall consider four variants of the logic of evidence-based belief, which depend on some fundamental assumptions one may make about evidence models, in terms of uniformity of evidence across worlds, and total coherence of maximally consistent sets of evidence. For each of the resulting logics, we prove two main results. The first is a characterization theorem for general evidence models as being {\em p-morphic images} of intended models. The second result determines a complete deductive calculus for each logic. Here our representation using extended evidence models is crucial, since it permits us to employ familiar techniques from modal logic.

\medskip

Anyone familiar with Sergei Artemov's work will have seen a similarity by now. It is very natural to attach to every believed proposition a ``justification" for that proposition.  This idea was first studied in Artemov's seminal paper \cite{artemov} and applied to epistemic logic in   \cite{artemov:2005} (see \cite{sep-logic-justification} for an overview and pointers to the relevant literature). In particular, $t\!:\!\phi$ is intended to mean that the agent believes $\phi$ and that $t$ is the justification for this belief. Here  $t$ is a proof term, and sophisticated logical systems have been developed that extend traditional ``provability logic'' to reason explicitly about the logical structure of these justifications.\footnote{By now, these systems have been applied to a variety of issues in epistemology and game theory, and other fields: see \cite{sep-logic-justification} for the relevant references.} Moreover, an appealing semantics exists for these systems (cf. \cite{fitting-jl}) that combines ideas from possible worlds semantics and the syntax of proof terms. The approach in the current paper is more coarse-grained, since we do not give ourselves proof or evidence terms that can be manipulated in our deductive systems. Nevertheless, we hope to show that even at our chosen level of modeling evidence, an amazing amount of fine-structure exists. A deeper comparison between our approach and justification logic seems a well-worth effort, but it is one step beyond the horizon of this paper.\footnote{For the moment, we can only refer to \cite{baltag-etal:12:jbc-wollic} for a system that merges features of justification logic with plausibility models and syntax dynamics in the style of ``dynamic-epistemic logic'' \cite{vanbenthem-newbook}, an enterprise close to the spirit of the present paper.}

\medskip

This paper is organized as follows.  The next section (Section \ref{basiclogic}) introduces the logical systems that we will study.  Each of these logical systems is defined using sublanguages of a single modal language, which includes  both  a  non-normal modality (the ``evidence-for'' modality) and normal modalities (two belief modalities and a universal modality).  One important theme in this paper  is that there are two different types of models that can be used as a semantics for this language.     The first type of model is a   neighborhood structure where the relations used to interpret the normal modalities are {\em derived} from the neighborhood function.    The second type of model  extends neighborhood models with relations, one for each belief operator.      Our first technical contribution   is to clarify the  relationship between these two types of models  (Sections \ref{derived} and \ref{representation}).   More generally, we compare these models   with the more standard   {\em plausibility models} often used to represent a rational agent's belief in Section \ref{belplaus}.    Sections \ref{syntactic-properties} and \ref{completeness} contain our second main contribution of this paper: proofs that our axiom systems are complete with respect to their intended class of models.  Section \ref{lang-ext} returns to  the original motivation for studying evidence logic from \cite{vBP-evidence}: developing dynamic logics of evidence management.     We offer some concluding remarks in Section \ref{conclusion}.

 \section{Logics of evidence and belief}\label{basiclogic}

    We start by presenting   the logical system that we will study in this paper.  Given a set $W$ of possible worlds or states, one of which represents the ``actual" situation, an agent gathers evidence about this situation from a variety of sources.    To simplify things, we assume these sources provide {\em binary} evidence, i.e., subsets of $W$ which (may) contain the actual world.           The following modal language can be used to describe what the agent believes given her available evidence (cf.  \cite{vBP-evidence}).  
  
  \begin{definition}\label{ev-lang} Let $\At$ be a fixed set of atomic propositions.   Let $\L=\L_{ABE\peq}$ be the smallest set of formulas generated
by the following grammar $$p\ |\ \neg\phi\ |\ \phi\wedge\psi\  |\ \nB\phi\ |\ \nc\phi \  |\ \nA\phi \  |\ \nl\phi$$
where $p\in\At$.  The propositional connectives ($\wedge, \rightarrow,\leftrightarrow$) are defined as usual and the duals\footnote{In other words, $\pB=\neg \nB\neg$, and similarly for other operators.} of $\nc, \nB,\nl$ and $\nA$ are $\ps, \nB$, $\pl$ and  $\pA$, respectively.

A {\em signature} is a subset of $\{A,B,E,\peq\}$, written usually as a string rather than a set. Sublanguages $\L'\subseteq\L$ will be denoted by $\L_{\sigma}$, where $\sigma$ is a signature, and $\L'$ contains only formulas with modalities in the signature.
\end{definition}

The intended interpretation of $\nc\phi$ is ``the agent has evidence for $\phi$" and   $\nB\phi$ says that ``the agents believes that $\phi$ is true."  The modality $\nl\phi$ refers to the plausibility order to be introduced below, and it has become standard to read it as ``the agent {\em safely believes} that $\phi$ is true", though we will really explore this interpretation only in Section \ref{belplaus}.      We also include the universal modality ($\nA\phi$: ``$\phi$ is true in all states'') for technical convenience.\footnote{A natural interpretation of $\nA\phi$ in the single-agent context of this paper is as ``the agent knows that $\phi$".  }  
   
Since we do not assume that the sources of the evidence are jointly consistent (or even that a single source is guaranteed to be consistent and provide {\em all} the available evidence), the ``evidence for" operator ($\nc\phi$) is not a normal modal operator.   That is, the  agent may have evidence for $\phi$ and evidence for $\psi$ ($\nc\phi\wedge\nc\psi)$ without having  evidence for their conjunction ($\neg\nc(\phi\wedge\psi)$).  Of course,   the two belief and  universal operators are normal modal operators.   So, the logical systems we study in this paper  {\em  combine} a non-normal modal logic with a normal one.

      \subsection{Models for $\L$ and its fragments}\label{ev-models}
      
     Our background assumption is that the   agent   uses the evidence she has gathered at each state to form her beliefs. There are two ways to make this precise.    The first is to ground the agent's beliefs (defined as relations on the set of states) on the evidence available at each state via a number of technical assumptions.   The second way is to define the agent's beliefs directly in terms of the evidence available at each state.    These different options lead us to consider several different signatures.   This motivates the following general definition of a {\em model}:

  \begin{definition}\label{ev-model} Let $\sigma$ be a signature containing $E$ and not containing\footnote{The universal modality will {\em always} be interpreted according to its intended meaning and need not be represented explicitly in the models.  In addition, we consider evidence the most fundamental notion in our setting and assume it always in our signature.} $A$. A {\bf $\sigma$-structure} is a tuple $\mo{M}=\langle W,  \langle I(m)\rangle_{m\in\sigma}, V\rangle$, where $W$ is a non-empty set of worlds and
\begin{itemize}
\item if $E\in\sigma$ then $I(E)\subseteq W\times \pow(W)$ is an {\em evidence relation,}
\item if $\mathrel\peq\in\sigma$ then $I(\peq)$ is a plausibility order on $W$: $I(\peq)$ is reflexive and transitive.\footnote{In the literature $\peq$ is usually assumed to be converse well-founded, but we shall work in a more general setting. Note that the plausibility relation need not be {\em connected}.}
\item if $B\in \sigma$ then $I(B)$ is an (arbitrary) binary relation on $W$, and\footnote{For readers familiar with the more standard literature on modal logics of belief, it is natural to wonder why we include a separate relation for our belief modalities (rather than using the plausibility ordering to define belief in the usual way).       The reason for working with this more general definition is discussed in Sections \ref{derived} and  \ref{belplaus}.  }
\item $V:\At\rightarrow \pow(W)$ is a valuation function.
\end{itemize}

We will not distinguish between $m\in\sigma$ and its denotation $I(m)$ in what follows, and we will write $E(w)$ for the set $\{X\ |\ wEX\}$.

\smallskip

A $\sigma$-structure is a {\bf $\sigma$-model} if it further satisfies the following constraints:
\begin{enumerate}
\item for each $w\in W$, $\varnothing \not\in E(w)$ and  $W\in E(w)$ and
\item if $\mathord\peq\in \sigma$ then for all $w,v,u\in W$, if $w\peq v$ and $w\in X\in E(u)$ it follows that $v \in X$ and
\item whenever $w\peq v$ and $u\mathrel B w$, it follows that $u\mathrel B v$.
\end{enumerate} 
\end{definition}

 An {\bf evidence model} is a $\sigma$-model where $\sigma=E$. We may also write simply {\bf model} instead of $\sigma$-model when the value of $\sigma$ is clear from context.
 
 The following basic assumptions are implicit in the above definition: 

\begin{itemize}

\item Sources may or may not be {\em reliable}: a subset recording a piece of evidence need not contain the actual world. Also, agents need not know which evidence is reliable.

\item The evidence gathered from different sources (or even the same source) may be jointly inconsistent.   And so, the intersection of all the gathered evidence may be empty. 

\item Despite the fact that sources may not be reliable or jointly inconsistent, they are all the agent has for forming beliefs.\footnote{Modeling sources and agents'  {\em trust} in these is quite feasible in our setting -- but we will not pursue this topic here.}
 
\end{itemize} 

 The {\em evidential state} of the agent is the set of all propositions  (i.e., subsets of $W$) identified by the agent's sources. In general, this could be any collection of subsets of $W$; but we  do impose some  mild conditions, that were stated in Constraint 1 in the above definition of our models:
 
 \begin{itemize}
 \item No evidence set is empty (evidence per se is never contradictory),
 \item The whole universe $W$ is an evidence set (agents know their `space').
 \end{itemize}
 In addition, one might expect a   `monotonicity'  assumption: 
\begin{quote} 
  If the agent has evidence $X$ and $X\subseteq Y$ then the agent has evidence $Y$.  
\end{quote}
To us, however, this is a property of propositions supported by evidence, not of the evidence itself. Therefore, we model this feature differently through the definition of our ``evidence for" modality (see Definition \ref{truthL}).

 Next, constraint 2 in the definition of our models ensures that  the agent's evidence ``coheres'' with her opinions about the relative plausibility of the states.  The intended interpretation of  $w\peq v$ is that, although the agent does not know which of $w$ or $v$ is the ``actual situation", she considers $v$ at least as plausible as $w$.  Thus, if   $X$ is evidence that the actual state may be $w$ (i.e., $w\in X$), then $X$ is also evidence for all states that the agent considers at least as plausible as $w$.   The underlying idea is that the agent uses her plausibility ordering to ``fill-out" the sets of states identified as evidence.  We will also  consider the even more constrained situation where the agent defines her plausibility ordering directly from the evidence (this is discussed in Section \ref{derived}). 
 
Finally, constraint 3 on our evidence models generalizes the usual assumption that the agent believes what holds throughout the  set of most plausible worlds:   Any state the agent believes is possible must be among the most plausible states overall, though we will not require that the converse always holds. 

\medskip
Let us define {\em truth} for formulas of $\L$ in a given model:

\begin{definition} \label{truthL} Let $\M=\langle W, E,B,\peq, V\rangle$ be an evidence model. Truth of a formula $\phi\in\L$ is defined inductively as follows: 
 \begin{itemize}
\item $\M,w\models p$ iff $w\in V(p)$\hspace{.3in} ($p\in \At$)
\item $\M,w\models \neg\phi$ iff $\M,w\not\models\phi$
\item $\M,w\models\phi\wedge\psi$ iff $\M,w\models\phi$ and $\M,w\models\psi$
\item  $\M,w\models\nc\phi$ iff  there exists $X$ such that $wEX$ and for all $v\in X$, $\M,v\models\phi$
\item  $\M,w\models \nB\phi$ if for all $v$, if  $w \mathrel B v$, then  $\M,v\models\phi$  
\item $\M,w\models \nl\phi$ iff for all $v$, if $v\seq w$, then  $\M,v\models\phi$ 
\item $\M,w\models \nA\phi$ iff for all $v\in W$, $\M,v\models\phi$ 
\end{itemize}
The truth set of $\phi$ is the set $\truth{\phi}{\M}=\{w\ |\ \M,w\models\phi\}$.   The standard logical notions of {\bf satisfiability} and {\bf validity} are defined as usual.  
 \end{definition}

 \subsection{From general models to intended models}\label{derived}

In this section, we consider an alternative way of interpreting our modal language $\L$, specializing the above ``general models'' to a special class where belief is entirely and explicitly evidence-driven.   
Rather than interpreting each modality   as a new relation on the set of states, we will now {\em derive} the relevant relations from the evidence sets.   As a result,  we can interpret the entire language $\L$ on an evidence model $\M=\langle W, E, V\rangle$.  
 
Recall that we do not assume that the collection of evidence sets $E(w)$ is closed under supersets. Also, even though evidence pieces are non-empty, their combination through the obvious operations of taking {\em intersections} need not yield consistent evidence: we allow for disjoint evidence sets, whose combination may lead (and should lead) to trouble. But importantly, even though an agent may not be able to consistently combine {\em all} of her evidence, there will be maximal collections of admissible evidence that she can safely put together to form {\em scenarios}:

 \begin{definition} \label{fip}  A {\bf $w$-scenario} is a maximal collection $\X\subseteq E(w)$ that has the fip (i.e., the finite intersection property:   for each finite subfamily $\{X_1,\ldots,X_n\}$ $\subseteq \X$ we have that
$\bigcap_{1\le i\le n} X_i\ne\varnothing$).  A collection is called a {\bf scenario} if it is a $w$-scenario for some state $w$.  
\end{definition}
Our notion   of having evidence for $\phi$ need not imply that the 
agent {\em believes}   $\phi$.  In order to believe a proposition  $\phi$, the agent must consider {\em all} her evidence for or against $\phi$.  The idea is that each $w$-scenario  represents a maximally consistent theory based on (some of) the evidence collected at  $w$. \footnote{Analogous ideas occur in semantics of conditionals \cite{kratzer,veltman} and belief revision  \cite{gardenfors,rott}.}  Note that the  definition of truth of the ``evidence for" operator   builds in monotonicity.  That is,   the agent has evidence for $\phi$ at $w$ provided there is some evidence  available at $w$ that implies $\phi$.   This motivates the following definition:

\begin{definition}
Given an evidence function $E:W\to \pow (W)$, we define $B_E\subseteq W\times W$ by $w \mathrel B_E v$ if $v\in\bigcap \mathcal X$ for some $w$-scenario $\mathcal X$.
\end{definition}

In order to derive a plausibility ordering, we  borrow a ubiquitous idea, occurring in point-set topology, but also in recent theories of relation merge   (cf. \cite{ARS,fenrong-book}): the so-called {\em specialization (pre)-order}.  Under this ordering,  $v$ is more plausible than $w$ if every set that is evidence for $w$ is also evidence for $v$. Formally, we define the following:

\begin{definition}\label{derived-plaus}
Given a evidence function $E:W\to \pow (W)$, we define $\peq_E\subseteq W\times W$ by $w \peq_E v$ if whenever $u,X$ are such that $w\in X\in E(u)$, then $v\in X$.
\end{definition}
To make this definition a bit more concrete, here is a simple illustration. 
 
\begin{center}
\begin{tikzpicture} 

\node[ellipse,draw,fill=gray!30,opacity=0.6,minimum height=0.6in,minimum width=1.2in] at (0.75,0) {$ $}; 
\node[ellipse,draw,fill=gray!30,opacity=0.6,minimum height=0.6in,minimum width=1.4in] at (2.5,0) {$ $}; 
\node[ellipse,draw,fill=gray!30,opacity=0.6,minimum height=0.6in,minimum width=1.2in] at (4.25,0) {$ $}; 

\draw[fill] (0,0) circle (2pt);
\node at (-0.2,-0.2) {$w_1$}; 

\draw[fill] (1.5,0) circle (2pt);
\node at (1.3,-0.2) {$w_2$}; 

\draw[fill] (3.5,0) circle (2pt);
\node at (3.3,-0.2) {$w_3$}; 

\draw[fill] (5,0) circle (2pt);
\node at (4.8,-0.2) {$w_4$}; 

\node at (2.5,-1.5) {$\E$}; 

\draw[fill] (7,0.5) circle(2pt); 
\node at (6.6,0.5) {$w_2$}; 

\draw[fill] (7,-1) circle(2pt); 
\node at (6.6,-1) {$w_1$};
\path[<-,thick,draw] (7,0.4) to (7,-1); 

\draw[fill] (8,0.5) circle(2pt); 
\node at (8.4,0.5) {$w_3$}; 

\draw[fill] (8,-1) circle(2pt); 
\node at (8.4,-1) {$w_4$};
\path[<-,thick,draw] (8,0.4) to (8,-1); 
\node at (7.5,-1.5) {$\peq_\E$};

\node[ellipse,draw,fill=gray!30,opacity=0.6,minimum height=0.6in,minimum width=1.2in] at (0.75,-3) {$ $}; 
\node[ellipse,draw,fill=gray!30,opacity=0.6,minimum height=0.6in,minimum width=1.2in] at (2.25,-3) {$ $}; 
 \node[circle,draw,fill=gray!30,opacity=0.6,minimum width=0.5in] at (4.7,-3) {$ $}; 
\draw[fill] (0,-3) circle (2pt);
\node at (-0.2,-3.2) {$w_1$}; 

\draw[fill] (1.5,-3) circle (2pt);
\node at (1.3,-3.2) {$w_2$}; 

\draw[fill] (3,-3) circle (2pt);
\node at (2.8,-3.2) {$w_3$}; 

\draw[fill] (4.7,-3) circle (2pt);
\node at (4.5,-3.2) {$w_4$}; 

\node at (2.5,-4.5) {$\E'$};

 \draw[fill]  (7,-4) circle (2pt); 
 \node at (6.6,-4) {$w_1$}; 

  \draw[fill]  (8,-4) circle (2pt); 
\node at (8.4,-4) {$w_3$}; 

\draw[fill] (7.5,-2.5) circle(2pt);
\node at (7.9,-2.5) {$w_2$}; 
 
  \draw[fill]  (9,-2.5) circle (2pt); 
  \node at (9.4,-2.5) {$w_4$}; 
\path[->,thick,draw] (7,-4) to (7.4,-2.6); 
\path[->,thick,draw] (8,-4) to (7.6,-2.6); 

\node at (7.5,-4.5) {$\peq_{\E'}$}; 

\end{tikzpicture}
\end{center}

Note that the derived relation $\peq_E$ is uniform throughout the model, even though evidence itself is not. It is indeed possible to define a point-wise variant of the specialization order with $u$ as parameter, but we shall not explore this option in this paper.

Given an evidence model $\mathcal M=\langle W,E,V\rangle$, we now define the extended structure
\[\mathcal M^\vartriangle=\langle W,E,B_E,\peq_E,V\rangle.\]
We extend the truth-definitions on $\mathcal M$ to all of $\L$ by setting $\lb\cdot\rb_\mathcal M=\lb\cdot
\rb_{\mathcal M^\vartriangle}$.
If $\mo M=\langle W,E,V\rangle^\vartriangle$, we say $\mo M$ is an {\em intended model.}

We will be interested in the precise relationship between intended models $\M^\vartriangle$ and our earlier general models $\M=\langle W, E, B, \peq, V\rangle$.  This issue will be discussed in detail in the remainder of the paper, but we conclude this section with a brief remark. Notice that, in general, $\M = \langle W,E,B,\peq_E,V\rangle$ is not necessarily a model according to Definition \ref{ev-model}. The problem is that the  constraint stating that if $w \mathrel B v\peq_E u$ then $w\mathrel B u$ is not necessarily satisfied. A particularly simple example is a uniform model where $W=\{w,v\}$ and $E(w)=E(v)=\{W\}$. If we take, for example, $B=\{(w,w)\}$, then it should be clear that $w\mathrel B w\peq_E v$ yet $w\not\mathrel B v$.

However, this can never happen over an intended model:

\begin{lemma} Suppose that  $\M=\langle W, E, V\rangle $ is an evidence model, then $\M^\vartriangle$ is a model according to Definition \ref{ev-model}.  
\end{lemma}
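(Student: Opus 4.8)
The plan is to verify directly that $\M^\vartriangle=\langle W,E,B_E,\peq_E,V\rangle$ satisfies each of the three numbered constraints in Definition~\ref{ev-model}, since it already has the right type: $E$ is an evidence relation, $\peq_E$ is clearly reflexive and transitive by construction, $B_E$ is a binary relation, and $V$ is unchanged. Constraint~1 ($\varnothing\notin E(w)$ and $W\in E(w)$) is immediate because $\M$ is assumed to be an evidence model, and the evidence relation is not altered when passing to $\M^\vartriangle$. So the content is in constraints~2 and~3.

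For constraint~2 we must show that if $w\peq_E v$ and $w\in X\in E(u)$, then $v\in X$. But this is simply the \emph{definition} of $\peq_E$ (Definition~\ref{derived-plaus}): $w\peq_E v$ says precisely that for all $u$ and all $X$ with $w\in X\in E(u)$ we have $v\in X$. So constraint~2 holds by unwinding definitions, with no real work.

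The main step — and the one I expect to be the only nontrivial obstacle — is constraint~3: whenever $w\peq_E v$ and $u\mathrel{B_E} w$, we must conclude $u\mathrel{B_E} v$. Here I would unfold $u\mathrel{B_E} w$: there is a $u$-scenario $\X$ with $w\in\bigcap\X$. I want $v\in\bigcap\X$, i.e.\ $v\in X$ for every $X\in\X$. Fix $X\in\X$; then $w\in X$ and $X\in E(u)$, so taking the particular witness $u$ (for the "$\exists u$" in the definition of $\peq_E$) and the set $X$, the hypothesis $w\peq_E v$ gives exactly $v\in X$. Since $X\in\X$ was arbitrary, $v\in\bigcap\X$, and hence $u\mathrel{B_E} v$ via the same scenario $\X$. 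The key insight making this clean is that $w\peq_E v$ quantifies over \emph{all} worlds $u'$ and \emph{all} evidence sets $X\in E(u')$ containing $w$, so in particular it applies to the evidence sets belonging to the very world $u$ witnessing $u\mathrel{B_E} w$; no change of scenario is needed, and one does not even need maximality of $\X$ for this direction. This is what fails for a general $\peq$ in place of $\peq_E$, as the two-world example before the lemma shows, and it is exactly where the intended-model structure is used.

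Putting the three verifications together establishes that $\M^\vartriangle$ meets all constraints of Definition~\ref{ev-model}, i.e.\ $\M^\vartriangle$ is a model. I would present constraints~1 and~2 in a sentence each and devote the bulk of the written proof to the short argument for constraint~3 above.
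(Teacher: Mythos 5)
Your proposal is correct and follows essentially the same route as the paper: constraints 1 and 2 are immediate (the latter being just the definition of $\peq_E$), and constraint 3 is handled by taking the $u$-scenario $\X$ witnessing $u\mathrel{B_E} w$ and observing that $w\peq_E v$ forces $v$ into every $X\in\X$, hence $v\in\bigcap\X$ and $u\mathrel{B_E} v$ via the same scenario. Your spelled-out justification that the universal quantification over worlds and evidence sets in the definition of $\peq_E$ applies in particular to $E(u)$ is exactly the step the paper states more briefly.
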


\begin{proof} Most conditions are obvious; the only one that we need to check is that if    $u\mathrel B_E w$ and $w\peq_E v$  then $u\mathrel B_E v$.   Since $u\mathrel B_E w$, there is a scenario $\X$ such that $w\in\bigcap \X$. But since $w\peq_E v$, it follows that also $v\in\bigcap \X$, and since $\X$ was a $u$-scenario, we have that $u\mathrel B_E v$.
\end{proof}

\subsection{p-morphisms}

It is important to identify maps between structures which preserve truth-sets. In our setting, the following definition will be very useful:

\begin{definition}\label{bisim}
Given evidence models $\mo{M}_1=\langle W_1,B_1,E_1,\peq_1,V_1\rangle$ and $\mo{M}_2=\langle W_2,B_2,E_2,\peq_2,V_2\rangle$, we say a function $\pi:W_1\to W_2$ is a p-{\em morphism} if the following conditions hold:
\begin{description}
\item[$\mathsf{atoms}$] $V_1=\pi^{-1}V_2$
\item[$\mathsf{forth}_R$] if $w\mathrel B_1 v$ then $\pi(w)\mathrel B_2\pi(v)$
\item[$\mathsf{back}_R$] if $\pi(w)\mathrel B_2 u$ then there is $v\in\pi^{-1}(u)$ such that $w\mathrel B_1 v$
\item[$\mathsf{forth}_E$] if $w\mathrel E_1 X$ then there is $Y\subseteq W_2$ such that $\pi(w)\mathrel E_2 Y$ and $Y\subseteq\pi[X]$
\item[$\mathsf{back}_E$] if $\pi(w)\mathrel E_2 Y$ then there is $X$ such that $\pi[X]\subseteq Y$ and $w\mathrel E_1 X$
\item[$\mathsf{forth}_\peq$] if $w\peq_1 v$ then $\pi(w)\peq_2\pi(v)$
\item[$\mathsf{back}_\peq$] if $\pi(w)\peq_2 u$ then there is $v\in\pi^{-1}(u)$ such that $w\peq_1 v$
\end{description}

We define p-morphism between structures of smaller signature by restricting the above conditions accordingly.
\end{definition}

Then we obtain the following familiar result which we present without its straightforward inductive proof:
\begin{theorem}\label{bistheo}
If $\pi$ is a p-morphism between $\sigma$-models $\pmo{M}_1$ and $\pmo{M}_2$ and $\varphi$ is any formula over the signature $\sigma$, then
\[\truth{\varphi}{\pmo{M}_1}=\pi^{-1}[\truth{\varphi}{\pmo{M}_2}].\]
\end{theorem}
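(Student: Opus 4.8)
The statement to prove is Theorem \ref{bistheo}: p-morphisms preserve truth sets, i.e. $\truth{\varphi}{\pmo{M}_1}=\pi^{-1}[\truth{\varphi}{\pmo{M}_2}]$.

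The plan is to proceed by induction on the structure of $\varphi$, showing for all $w \in W_1$ that $\pmo{M}_1, w \models \varphi$ iff $\pmo{M}_2, \pi(w) \models \varphi$. The atomic case is immediate from the $\mathsf{atoms}$ clause ($V_1 = \pi^{-1} V_2$), and the Boolean cases ($\neg$, $\wedge$) are routine since the truth conditions are defined pointwise. The interesting cases are the modal operators, and each splits into two directions corresponding to the $\mathsf{forth}$ and $\mathsf{back}$ clauses of Definition \ref{bisim}.

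For the normal modalities I would handle them in the standard way. For $\nB\phi$: if $\pmo{M}_1, w \models \nB\phi$ and $\pi(w) \mathrel{B_2} u$, then by $\mathsf{back}_R$ there is $v \in \pi^{-1}(u)$ with $w \mathrel{B_1} v$, so $\pmo{M}_1, v \models \phi$, hence by IH $\pmo{M}_2, u \models \phi$; this gives $\pmo{M}_2, \pi(w) \models \nB\phi$. Conversely if $\pmo{M}_2, \pi(w) \models \nB\phi$ and $w \mathrel{B_1} v$, then by $\mathsf{forth}_R$ we get $\pi(w) \mathrel{B_2} \pi(v)$, so $\pmo{M}_2, \pi(v) \models \phi$, hence by IH $\pmo{M}_1, v \models \phi$. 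The case of $\nl\phi$ is identical in form, using $\mathsf{forth}_\peq$ and $\mathsf{back}_\peq$ — one only needs to remember that the truth clause for $\nl$ quantifies over $v$ with $v \seq w$, i.e. $w \peq v$, which is exactly the direction in which the $\mathsf{forth}_\peq/\mathsf{back}_\peq$ clauses are phrased. The universal modality $\nA\phi$ is even simpler since it quantifies over all of $W_i$: surjectivity is not needed because we only claim the $\pi^{-1}$-pullback equality; $\pmo{M}_1, w \models \nA\phi$ means $\phi$ holds everywhere in $W_1$, which by IH is equivalent to $\phi$ holding at every point of $\pi[W_1]$, and one checks this matches $\pmo{M}_2, \pi(w) \models \nA\phi$ pulled back. (Strictly, for $\nA$ one uses that $w \models \nA\phi$ in $\pmo M_1$ iff every $\pi(v)$, $v \in W_1$, satisfies $\phi$ in $\pmo M_2$; this is implied by, but need not imply, $\pi(w) \models \nA\phi$ — however the pullback statement $\truth{\nA\phi}{\pmo M_1} = \pi^{-1}\truth{\nA\phi}{\pmo M_2}$ still holds because both sides are either all of $W_1$ or empty, and they are nonempty under the same condition once one notes the IH gives $\truth{\phi}{\pmo M_1} = \pi^{-1}\truth{\phi}{\pmo M_2}$.)

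The one genuinely non-normal case is $\nc\phi$, and this is where I expect the only real subtlety. Suppose $\pmo{M}_1, w \models \nc\phi$: there is $X$ with $w \mathrel{E_1} X$ and $X \subseteq \truth{\phi}{\pmo{M}_1}$. By $\mathsf{forth}_E$ there is $Y$ with $\pi(w) \mathrel{E_2} Y$ and $Y \subseteq \pi[X]$; then every $y \in Y$ equals $\pi(v)$ for some $v \in X$, and $\pmo{M}_1, v \models \phi$, so by IH $\pmo{M}_2, y \models \phi$; thus $Y \subseteq \truth{\phi}{\pmo{M}_2}$ and $\pmo{M}_2, \pi(w) \models \nc\phi$. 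Conversely, suppose $\pmo{M}_2, \pi(w) \models \nc\phi$: there is $Y$ with $\pi(w) \mathrel{E_2} Y$ and $Y \subseteq \truth{\phi}{\pmo{M}_2}$. By $\mathsf{back}_E$ there is $X$ with $w \mathrel{E_1} X$ and $\pi[X] \subseteq Y$; then for any $v \in X$ we have $\pi(v) \in Y$, so $\pmo{M}_2, \pi(v) \models \phi$, hence by IH $\pmo{M}_1, v \models \phi$; so $X \subseteq \truth{\phi}{\pmo{M}_1}$ and $\pmo{M}_1, w \models \nc\phi$. The point to be careful about is the asymmetry between the two $\mathsf{forth}_E/\mathsf{back}_E$ clauses — one produces a $Y$ with $Y \subseteq \pi[X]$, the other an $X$ with $\pi[X] \subseteq Y$ — and in each direction one must verify it is exactly the inclusion needed to push $\phi$ through the image under $\pi$ and invoke the induction hypothesis. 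Once these inclusions are lined up correctly the argument closes, and assembling the seven cases completes the induction; the restriction to smaller signatures (as in the last sentence of Definition \ref{bisim}) is handled by simply dropping the corresponding inductive cases.
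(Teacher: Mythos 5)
Your induction is exactly the ``straightforward inductive proof'' that the paper omits, and the cases the theorem actually demands are handled correctly: atoms via $\mathsf{atoms}$, Booleans pointwise, $\nB$ and $\nl$ via the matching forth/back clauses, and, most importantly, the non-normal $\nc$ case, where you line up the two asymmetric inclusions $Y\subseteq\pi[X]$ and $\pi[X]\subseteq Y$ in precisely the way each direction requires.

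The one genuine flaw is the parenthetical in your $\nA$ case. The claim that $\truth{\nA\phi}{\pmo M_1}=\pi^{-1}[\truth{\nA\phi}{\pmo M_2}]$ holds because ``both sides are nonempty under the same condition'' is false for non-surjective $\pi$: by the induction hypothesis the left side equals $W_1$ iff $\pi[W_1]\subseteq\truth{\phi}{\pmo M_2}$, while the right side equals $W_1$ iff $\truth{\phi}{\pmo M_2}=W_2$, and these come apart whenever $\pi$ misses a $\neg\phi$-world. Concretely, let $W_2=\{a,b\}$ with $V_2(p)=\{a\}$, $E_2(a)=\{W_2,\{a\}\}$, $E_2(b)=\{W_2\}$, and $W_1=\{x\}$ with $V_1(p)=\{x\}$, $E_1(x)=\{W_1\}$, $\pi(x)=a$; this $\pi$ satisfies every clause of Definition \ref{bisim} for the signature $E$, yet $\pmo M_1,x\models\nA p$ while $\pmo M_2,a\not\models\nA p$. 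The repair is either to observe that the case does not arise under the theorem's hypotheses---by Definition \ref{ev-model} a $\sigma$-model has $A\notin\sigma$, so a formula over the signature $\sigma$ contains no $\nA$---or, if one wants truth preservation for $\nA$ as the paper does later (the Fact on $[C]$ and $[U]$, and Theorem \ref{reptheo}), to add surjectivity of $\pi$ as a hypothesis, which is exactly what the paper's relation $\pmo M_1\gg\pmo M_2$ supplies; with $\pi$ surjective, $\pi[W_1]=W_2$ and your all-or-nothing argument goes through. Drop the false parenthetical and either omit the $\nA$ case or state it under surjectivity; everything else stands.
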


If a surjective $p$-morphism exists from $\pmo{M}_1$ to $\pmo{M}_2$, we will write $\pmo{M}_1 \gg \pmo{M}_2$. In Section \ref{representation} we will show that, given a general evidence model  $\pmo{M}$, there is an evidence model ${\pmo{M}^\triangledown}=\langle W^\triangledown,E^\triangledown,V^\triangledown\rangle$ such that $(\pmo{M}^\triangledown)^\vartriangle\gg{\pmo{M}}$.   
Thus a general evidence model may always be represented as a $p$-morphic image of an intended evidence model. This intended model $\pmo M^\triangledown$, however, will often be much larger.

\subsection{Special classes of evidence models}

The class of evidence models we have described gives the most general setting such an agent may face. However, there are natural additional assumptions one may consider:

\begin{definition} 
Suppose that $\mo{M}$ is a  $(EB\!\!\peq)$-model.   We say  $\mo{M}$ is {\bf flat} if whenever $w\mathrel E X$ there is $v\in X$ such that $w\mathrel B v$.

We say $\mo{M}$ is {\bf uniform} if $E$ and $B$ are constant (that is, given $u,v,w$, then $wBv$ if and only if $uBv$, and given $w,v$, $E(w)=E(v)$), and whenever $w\mathrel B v$ it follows that $v$ is $\peq$-maximal. In this case, we shall treat $E$ as a set $\mathcal E$ (of neighborhoods) rather than a function and $B$ as a set of points.

Finally, we say $\mo{M}$ is {\bf concise} if it is flat, uniform and, whenever $w$ is $\peq$-maximal then $w\in B$.
\end{definition}
The reason for this  terminology will show in the arguments to follow. The definition applies, {\em mutatis mutandis},  to models with different signatures (e.g., evidence models).     Lemma \ref{mfipmax} shows  that if $\mo M=\langle W,\E,V\rangle$ is a uniform intended evidence model then every point in $B_E$ is maximal, whereas if $\mo M^\vartriangle$ is flat as well, then $B_E$ coincides exactly with the set of maximal points. Note, however, that in general the property of being concise is stronger than that of merely being flat {\em and} uniform.

Flatness may seem like an odd assumption but it is actually very natural because every finite intended model is flat, though infinite models need not be. Uniformity is also natural because one often wishes to model a situation where an agent either {\em has} or does {\em not} have evidence right at the beginning, rather than obtaining different evidence at each state.


\subsection{The logics}
We now turn to logics for reasoning about distinct classes of evidence models. Our first observation is that the language $\L$ is  sensitive to flatness:

\begin{lemma}\label{flatax}
If $\mo{M}$ is a flat model, then $\mo{M}\models \nc\phi\rightarrow \pB\phi$. 
\end{lemma}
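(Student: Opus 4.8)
The plan is to unwind the definitions of flatness and of the relevant truth conditions, and show that any world satisfying $\nc\phi$ must also satisfy $\pB\phi$. So fix a flat model $\mo M$ and a world $w$ with $\mo M,w\models\nc\phi$. By the truth clause for $\nc$, there is a set $X$ with $w\mathrel E X$ and $\mo M,v\models\phi$ for every $v\in X$. Since $\mo M$ is flat and $w\mathrel E X$, there exists $v\in X$ with $w\mathrel B v$. This $v$ witnesses $\pB\phi$ at $w$: we have $w\mathrel B v$, and since $v\in X$ we get $\mo M,v\models\phi$. Hence $\mo M,w\models\pB\phi$, as required, and since $w$ was arbitrary, $\mo M\models\nc\phi\to\pB\phi$.

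I expect there to be essentially no obstacle here: the statement is an immediate consequence of matching the flatness condition against the semantics. The only point that needs a moment's care is keeping straight that $\pB$ is the \emph{existential} dual of $\nB$ (so $\mo M,w\models\pB\phi$ iff there is some $v$ with $w\mathrel B v$ and $\mo M,v\models\phi$), which is precisely the shape of what flatness hands us. One should also note that this argument uses nothing about the plausibility order or about constraints 2 and 3 in Definition \ref{ev-model}; it works for flat models over any signature containing $E$ and $B$, which matches the remark that the flatness definition applies \emph{mutatis mutandis} to other signatures.

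A secondary thing worth flagging, though not strictly part of proving the lemma, is that flatness is genuinely needed: without it one can have $w\mathrel E X$ with $X$ entirely inside $\truth{\phi}{\mo M}$ yet $w$ related by $B$ only to worlds outside $X$ (or to no world at all), so $\nc\phi$ holds while $\pB\phi$ fails. This is why the lemma is advertised as showing the language $\L$ is ``sensitive to flatness''. I would present only the short forward argument above in the actual proof.
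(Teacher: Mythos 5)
Your proof is correct, but it takes a different (and more elementary) route than the paper's. You argue directly from the definition of flatness for models carrying a belief relation: $\nc\phi$ at $w$ yields $X\in E(w)$ with $X\subseteq\truth{\phi}{\mo M}$, flatness yields $v\in X$ with $w\mathrel B v$, and since $\pB$ is the existential dual of $\nB$, that $v$ witnesses $\pB\phi$. The paper instead argues at the level of intended evidence models, where belief is the derived relation $B_E$: it extends the singleton $\{X\}$ to a $w$-scenario via Zorn's Lemma and uses the fact that in flat structures this scenario has a non-empty intersection, any element of which is a $B_E$-successor of $w$ lying in $X$ and hence satisfying $\phi$. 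Your version buys generality and economy: it works verbatim for any flat $\sigma$-model with $B$ primitive (matching the ``mutatis mutandis'' remark), needs no choice principle, and, as you note, uses none of constraints 2--3 of Definition \ref{ev-model}; when specialized to intended models it still applies, since there $\nB$ is interpreted via $B_E$ and flatness is stated with $B_E$ in place of $B$. What the paper's scenario-based argument buys is the explanation of \emph{why} flat intended models behave this way -- it exhibits the maximally consistent body of evidence containing $X$ whose non-empty intersection grounds the belief -- which is the reading used later (e.g., in Lemma \ref{mfipmax} and the discussion that every finite intended model is flat). Your closing remark that flatness is genuinely needed is also consistent with the paper, which makes exactly this point with the model $\mo{M}_\infty$ in Corollary \ref{nonfin}.
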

 \begin{proof}
If $X\in E(w)$ is an evidence set witnessing $\varphi$ (i.e., $X\subseteq \truth{\phi}{\M}$), then the singleton $\{X\}$ can be extended to a $w$-scenario using Zorn's Lemma, which, in flat structures, has a non-empty intersection.
\end{proof}

Meanwhile, the formula $\nc\phi \to \pB\varphi$ is not valid in general. To see this,  consider a uniform evidence model $\mo{M}_{\infty} = \langle W, E, V\rangle$  with domain $W=\mathbb N$ and evidence sets $E(w)=\mathcal E=\{[N,\infty)\ |\  N\in \mathbb{N}\}\cup\{W\}$ for each $w\in W$. The valuation is unimportant, so we may let $V=\varnothing$.  Clearly,  the only scenario on $\mo{M}_{\infty}$ is all of $\mathcal E$, but $\bigcap \mathcal E=\varnothing$. Hence $\mo{M}_{\infty}\models \nB\bot$, i.e.,  $\mo{M}_{\infty}\not\models\pB\top$; yet $\mo{M}_{\infty}\models\nc\top$ (this formula is universally valid), and we conclude  
$$\mo{M}_{\infty}\not\models\nc\top\to\pB\top.$$
 From this we get the following corollary:

\begin{corollary}\label{nonfin}
The logic of evidence models does not have the finite model property, nor does the logic of uniform evidence models.
\end{corollary}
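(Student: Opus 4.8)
The plan is to derive Corollary \ref{nonfin} directly from the discussion preceding it, which already did essentially all of the work. The key observation is that the model $\mo M_\infty$ constructed above is a \emph{uniform} evidence model (its evidence sets are constant across worlds), and we have shown $\mo M_\infty\not\models\nc\top\to\pB\top$. Since every uniform evidence model is in particular an evidence model, the formula $\nc\top\to\pB\top$ is refutable in both classes, so it is \emph{not} a theorem of the logic of (uniform) evidence models.

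The heart of the matter, then, is to show that $\nc\top\to\pB\top$ is valid on every \emph{finite} evidence model, so that if either logic had the finite model property, this formula would be a theorem --- contradicting the previous paragraph. First I would note that by Lemma \ref{flatax} it suffices to show that every finite intended evidence model is flat, since flatness gives $\mo M\models\nc\phi\to\pB\phi$ for all $\phi$, in particular for $\phi=\top$. (More carefully: the semantics of $\L$ on an arbitrary evidence model $\M=\langle W,E,V\rangle$ is by definition the semantics on $\M^\vartriangle$, and Lemma 1 tells us $\M^\vartriangle$ is a genuine model; so what we want is that $\M^\vartriangle$ is flat whenever $W$ is finite.) To see finite intended models are flat, suppose $w\mathrel E X$. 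Among all subfamilies of $E(w)$ containing $X$ and having the fip, pick a maximal one $\X$ --- this exists trivially since $E(w)$ is finite, no Zorn needed. Then $\X$ is a $w$-scenario with $X\in\X$, and by the fip together with finiteness, $\bigcap\X\neq\varnothing$; moreover $\bigcap\X\subseteq X$. Picking any $v\in\bigcap\X$ gives $v\in X$ and, by the definition of $B_E$, $w\mathrel B_E v$. Hence $\M^\vartriangle$ is flat.

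Assembling the argument: suppose for contradiction that the logic of evidence models (respectively, uniform evidence models) had the finite model property. Since $\nc\top\to\pB\top$ is valid on all finite evidence models --- by the flatness argument above plus Lemma \ref{flatax} --- and finite uniform evidence models are a subclass, the formula would be valid on all finite models of the relevant class, hence (by FMP) a member of the logic, hence valid on \emph{all} models of that class. But $\mo M_\infty$ refutes it, and $\mo M_\infty$ is a uniform evidence model, so this is the desired contradiction in both cases.

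I expect no real obstacle here; the only point requiring a little care is the bookkeeping about which semantics is meant (the logic of evidence models is the set of $\L$-formulas valid on all $\M^\vartriangle$ for $\M$ an evidence model, not on the bare neighborhood structures $\M$), and the trivial observation that finiteness replaces the appeal to Zorn's Lemma in Lemma \ref{flatax}'s proof. If one wants to be fully explicit, one can also remark that Corollary \ref{nonfin} is really a statement about four logics --- flat, uniform, concise, and general --- but the finite-model-property failure, as stated, concerns only the general and uniform classes, and the counterexample $\mo M_\infty$ handles both simultaneously.
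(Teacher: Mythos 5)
Your proposal is correct and follows essentially the same route as the paper: the paper's proof likewise observes that every finite (intended) model is flat, hence validates $\nc\top\to\pB\top$ by Lemma \ref{flatax}, while $\mo M_\infty$ — a uniform evidence model — refutes it. You merely make explicit the finiteness-replaces-Zorn argument for flatness and the bookkeeping about evaluating on $\M^\vartriangle$, which the paper leaves implicit.
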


\begin{proof} Every finite model is flat, and hence it validates $\nc\top\to\pB\top$; but as we have just shown, this formula of our language is not valid over all uniform evidence models.
\end{proof}

With this in mind, we state a list of axioms and rules for evidence logics:\\

\begin{center}
\scalebox{0.8}{
\begin{tabular}{lcr}
Name&Formula&Logic(s)\\
\hline &&\\
{\bf tautology}&all propositional tautologies&all\\\\

$\mathsf{S5}_A$&$\mathsf {S5}$ axioms for $A$&all\\\\

$\mathsf K_B$& $\mathsf {K}$ axioms for $B$&all\\\\

$\mathsf S4_\peq$& $\mathsf {S4}$ axioms for $\nl$&all\\\\

{\bf no empty evidence}&$\ps\top$&all\\\\

 {\bf pullout}&$\nc\varphi\wedge \nA\psi\leftrightarrow \nc(\varphi\wedge \nA\psi)$&all\\\\

{\bf universality}&$\nA\varphi \to [X]\phi$ for $X=E,B,\peq$&all\\\\

{\bf $E$-monotonicity}&$\displaystyle{\dfrac{\varphi\to\psi}{\nc\varphi\to\nc\psi}}$&all\\\\

\end{tabular} 
}
\scalebox{0.8}{
\begin{tabular}{lcr}
Name&Formula&Logic(s)\\
\hline &&\\

{\bf plausible evidence}& $\nc\varphi\to\nc(\varphi\wedge\nl\varphi)$&all\\\\

{\bf $B$-monotonicity}&$\nB\varphi\rightarrow\nB\nl\varphi$ &all\\\\

{\bf flatness}&$\nc\varphi\to \pB\varphi$&$\flat,\flat u$\\\\

{\bf $\bigcirc$-uniformity}&$\bigcirc\varphi\to \nA\bigcirc\varphi$&$u,\flat u$\\
&for $\bigcirc=\nB,\pB,\nc,\ps$&\\

{\bf maximality}&$\pB(\pl\alpha\wedge\beta)\rightarrow \pB(\alpha\wedge\pl\beta)$&$u,\flat u$\\\\

{\bf conciseness}&$\nB\varphi\to \pl\nl\varphi$&$\flat u$\\\\

{\bf MP}&Modus Ponens&all\\\\
{${\bf N}_A$}&$\displaystyle\dfrac{\phi}{\nA\phi}$&all
\end{tabular}
}
\end{center}

The above table gives a family of axioms which in turn define a family of logics; given a signature $\sigma$, we let $\logic^\sigma$ denote the logic which uses only those axioms and rules that fall within $\L^\sigma$ and are marked ``all''. The subscripts $\flat,u$ denote the addition of the respective axioms; note that the logic ${\sf Ev}^\sigma_{\flat u}$ includes the additional conciseness axiom which is not present in any other logic. We will denote derivability in the logic $\lambda$ by $\vdash_\lambda$, where $\lambda$ is any one of the four combinations that we may form from a given $\sigma$, that is, $\lambda\in\{\logic^\sigma,\logic^\sigma_{\flat},\logic^\sigma_{u},\logic^\sigma_{\flat u}\}$. If $\Theta$ is a (possibly infinite) set of formulas, $\Theta\vdash_\lambda\phi$ means that there is a finite $\Theta'\subseteq \Theta$ such that $\vdash_\lambda\bigwedge\Theta'\to\phi.$

These axioms are sound for their respective classes of models:

\begin{theorem}
The logics $\logic^\sigma,\logic^\sigma_{\flat},\logic^\sigma_{u},\logic^\sigma_{\flat u}$ are sound for the classes of all $\sigma$-models, all uniform $\sigma$-models, all flat $\sigma$-models and all concise $\sigma$-models, respectively.
\end{theorem}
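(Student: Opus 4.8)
The plan is to verify soundness axiom-by-axiom and rule-by-rule, in each case checking that validity is preserved over the relevant model class. Since the logics $\logic^\sigma_{(\flat)(u)}$ differ only in which axioms are added and over which class we evaluate, it suffices to prove: (i) every ``all''-axiom is valid on every $\sigma$-model; (ii) the rules $E$-monotonicity, $N_A$ and MP preserve validity on every $\sigma$-model; (iii) the flatness axiom is valid on flat (and flat uniform) models; (iv) the $\bigcirc$-uniformity and maximality axioms are valid on uniform (and flat uniform) models; and (v) the conciseness axiom is valid on concise models. Items (iii)--(v) restricted to the smaller classes follow from the observation that a concise model is flat and uniform, so any axiom valid on flat models or on uniform models is automatically valid on concise models.

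Most of the work is routine semantic unwinding using Definition \ref{truthL}. I would dispatch the bookkeeping items first: the $\mathsf{S5}_A$, $\mathsf K_B$ and $\mathsf S4_\peq$ axioms hold because $\nA$ is interpreted by the universal relation, $\nB$ by an arbitrary relation (hence $\mathsf K$), and $\nl$ by the converse of the reflexive-transitive $\peq$ (hence $\mathsf S4$); ``no empty evidence'' is exactly Constraint 1 ($\varnothing\notin E(w)$), while the $W\in E(w)$ half of Constraint 1 gives $\nc\top$; ``universality'' $\nA\varphi\to[X]\varphi$ holds since if $\varphi$ is true everywhere it is true at every $X$-, $B$- or $\peq$-successor; $N_A$ and MP preserve validity trivially; and $E$-monotonicity is immediate from the truth clause for $\nc$ (an evidence set witnessing $\varphi$ witnesses $\psi$ when $\varphi\to\psi$ is valid). ``Pullout'' uses that $\nA\psi$ has a constant truth value across $W$: if $\M,w\models\nc\varphi\wedge\nA\psi$, the same evidence set $X\subseteq\truth\varphi\M$ satisfies $X\subseteq\truth{\varphi\wedge\nA\psi}\M$ since $\nA\psi$ holds everywhere, and conversely. ``Plausible evidence'' $\nc\varphi\to\nc(\varphi\wedge\nl\varphi)$ is precisely Constraint 2: if $X\in E(u)$ and $X\subseteq\truth\varphi\M$, then for $v\in X$ and any $w\seq v$ we get $w\in X$ by Constraint 2, so $w\models\varphi$, hence $v\models\nl\varphi$; thus $X\subseteq\truth{\varphi\wedge\nl\varphi}\M$. ``$B$-monotonicity'' $\nB\varphi\to\nB\nl\varphi$ is Constraint 3: if $w\mathrel Bv$ and $u\seq v$ then $w\mathrel Bu$, so $u\models\varphi$, giving $v\models\nl\varphi$.

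For the class-specific axioms: ``flatness'' is exactly Lemma \ref{flatax}, already proved in the excerpt. For ``$\bigcirc$-uniformity'' over uniform models, $E$ is constant so $\truth{\nc\varphi}\M$ and $\truth{\ps\varphi}\M$ are each either all of $W$ or empty, and $B$ is constant so likewise for $\truth{\nB\varphi}\M$ and $\truth{\pB\varphi}\M$; in every case $\bigcirc\varphi\to\nA\bigcirc\varphi$ holds. For ``maximality'' $\pB(\pl\alpha\wedge\beta)\to\pB(\alpha\wedge\pl\beta)$ over uniform models: suppose $w\models\pB(\pl\alpha\wedge\beta)$, so there is $v$ with $w\mathrel Bv$, $v\models\beta$, and $v\models\pl\alpha$, i.e.\ some $u\seq v$ with $u\models\alpha$; since $\M$ is uniform, $w\mathrel Bv$ forces $v$ to be $\peq$-maximal, hence $u\seq v$ and maximality of $v$ give $v\seq u$, so $u$ is also maximal and $u\mathrel Bu$? — more carefully, from $w\mathrel Bv$ and $v\peq u$ (since $v\seq u$ means $u\peq v$, actually $u\seq v$ means $v\peq u$) one uses Constraint 3 and uniformity: $w\mathrel Bv$ and $v\peq u$ give $w\mathrel Bu$; then $u\models\alpha$ and, since $v\seq u$ i.e.\ $u\peq v$ wait — I would here pin down the direction of $\seq$ carefully using $\M,w\models\nl\phi$ iff $\forall v\seq w$, and conclude $u\models\pl\beta$ because $v\seq u$ and $v\models\beta$, so $u\models\alpha\wedge\pl\beta$ and $w\models\pB(\alpha\wedge\pl\beta)$. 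For ``conciseness'' $\nB\varphi\to\pl\nl\varphi$ over concise models: if $w\models\nB\varphi$ I must find some $v\seq w$ with $v\models\nl\varphi$, i.e.\ with all $u\seq v$ satisfying $\varphi$; the natural choice is a $\peq$-maximal $v\seq w$, which by conciseness lies in $B$, hence $v\mathrel Bv$ wait — conciseness says maximal points are in $B$, and uniformity says points in $B$ are maximal, and one invokes Lemma \ref{mfipmax} or the definitions to transfer $\nB\varphi$ to $v$; I expect this argument to lean on the interplay of flatness (to get some $B$-successor of $w$, which is maximal), $B$-monotonicity/Constraint 3, and $\bigcirc$-uniformity.

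The main obstacle is keeping the direction conventions straight between $\peq$ and $\seq$ and between $\nl$ (reads ``all $\seq$-below'') and $\pl$, especially in the maximality and conciseness arguments, where one repeatedly converts ``$v$ is $\peq$-maximal'' into ``$v\seq u$ for all comparable $u$'' and back; a single sign error collapses the proof. I would therefore fix notation once at the start — $\seq$ is the converse of $\peq$, $\nl\phi$ holds at $w$ iff $\phi$ holds at all $v$ with $w\peq v$, and $v$ is maximal iff $v\peq u$ implies $u\peq v$ — and then each of the class-specific verifications becomes a short chase through Constraints 2 and 3 plus the defining clauses of \emph{flat}, \emph{uniform}, \emph{concise}. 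The ``all''-axioms, by contrast, are essentially immediate from the constraints as indicated above, and the rules preserve validity for the usual reasons.
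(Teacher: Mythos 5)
Your verification of the ``all'' axioms, flatness, $\bigcirc$-uniformity and maximality follows essentially the same route as the paper (Constraints 1--3 plus Lemma \ref{flatax}); the $\peq$/$\seq$ direction worries in the maximality argument are only presentational, and once straightened out your witness $u$ works --- indeed your use of Constraint 3 to get $w\mathrel B u$ is, if anything, a cleaner justification than the paper's remark that maximal points lie in $B$.

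The genuine gap is the conciseness axiom $\nB\varphi\to\pl\nl\varphi$, and it sits exactly where you hesitate. You propose to ``take a $\peq$-maximal $v\seq w$'', but nothing in the definition of a concise $\sigma$-model guarantees that such a $v$ exists: flatness of a general model only says that every evidence set meets $B$, and says nothing about the primitive order $\peq$. Concretely, let $W=\mathbb N\cup\{b\}$ with $n\peq m$ iff $n\le m$, $b$ comparable only to itself, $E$ constantly $\{W\}$, and $B=W\times\{b\}$; every clause of flat, uniform and concise (and Constraints 1--3) is satisfied, yet no $n\in\mathbb N$ has a maximal point above it, so the step ``pick a maximal $v\seq w$'' is unavailable --- and with $V(p)=\{b\}$ the world $0$ satisfies $\nB p$ but not $\pl\nl p$, so no purely local rearrangement of your argument can succeed at this level of generality. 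This is precisely why the paper does not verify conciseness by a definitional chase over general concise models: it appeals to Theorem \ref{flatunitheo}, whose proof runs through Lemma \ref{flatisdir} and Corollary \ref{CorFlatMax} --- for the \emph{derived} plausibility order of a flat evidence model, directed sets are bounded (Zorn applied to scenarios), hence maximal points exist above every world --- and in the completeness argument for $\logic_{\flat u}$ only \emph{finite} concise models are ever needed, where such maximal points exist trivially. To close your proof you must import one of these extra ingredients (intended/derived plausibility with the boundedness argument, or finiteness); the bare definition of concise will not supply the maximal point your argument requires.
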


\begin{proof}
We assume familiarity with neighborhood semantics and well-known modal logics, so we will restrict ourselves to commenting on the more unusual axioms.

Let $\mo M$ be any model; by passing to $\mo M^\vartriangle$ if necessary, we may assume without loss of generality that $\mo M=\langle W,E,B,\peq,V\rangle$.

Let us begin by checking {\bf plausible evidence}. Suppose that $w\in \lb\nc\varphi\rb_\mathcal M$, so that there is $X\in E(w)$ such that $X\subseteq\lb\varphi\rb_\mathcal M$. Pick $v\in X$ and suppose $v\peq u$. Then, $u\in X$, so that $u\in \lb\varphi\rb_\mathcal M$; since $u$ was arbitrary, $v\in\lb\varphi\wedge\nl\varphi\rb_\mathcal M$ and thus $w\in \lb\nc(\phi\wedge\nl\varphi)\rb_\mathcal M$, as claimed.

The {\bf $B$-monotonicity} axiom follows easily from the condition that if $u\peq v$ and $w\mathrel B u$, then $w\mathrel B v$.

The {\bf flatness} axiom is Lemma \ref{flatax} while {\bf conciseness} follows from Theorem \ref{flatunitheo}, which we prove later, and it gives the slightly stronger $\nB\varphi\rightarrow \nA\pl\nl\varphi$.

Finally, the {\bf maximality} axiom uses the fact that, over concise models, every element of $B$ is maximal. For indeed, if $\pB(\alpha\wedge\pl\beta)$ holds, there is some $w\in B\cap\lb \alpha\wedge\pl\beta\rb_\mathcal M$ and thus some $v\seq w$ with $v\in \lb \beta\rb_\mathcal M$. Since $w$ is maximal we must have that $v\peq w$ and that $v$ is maximal as well, so that $v\in B$ and $v$ satisfies $\pl\alpha$. But it also satisfies $\beta,$ so we have that $\pB(\pl\alpha\wedge\beta)$ holds.
\end{proof}

The weakest logic $\logic$ will be called {\em general} evidence logic, while $\logic_\flat,\logic_{\flat u}$ will be called {\em flat logics} and $\logic_u,\logic_{\flat u}$ will be called {\em uniform logics}. We will write {\em $\lambda$-consistency} for consistency over the logic $\lambda$.

\section{Belief and plausibility}\label{belplaus}

 Now we step aside, and consider a different tradition in modeling beliefs. 

A {\em plausibility} model is a tuple $\langle W, \peq, V\rangle$ where $W$ is a nonempty set, $V$ is a valuation function and   $\peq$ is a reflexive, transitive and well-founded order on $W$.   We assume the reader is familiar with these well-studied models and the modal languages used to reason about them (see \cite{vanbenthem-newbook} for details and pointers to the relevant literature).    Our evidence models  are not intended to {\em replace} plausibility models, but rather to complement them.   So, what exactly is the  relationship between these two frameworks for modeling beliefs?  The answer is subtle (see \cite[Section 5]{vBP-evidence} for some initial observations).

 On plausibility models, it is commonplace to say that an  agent believes a formula if it is  true in all the $\peq$-maximal worlds.    This makes belief definable in the language containing only the universal modality $\nA$ and the plausibility modality $[\peq]$, namely, as $\nA\langle \peq \rangle [\peq]\phi$.\footnote{This is not quite correct if there  is more than one information cell (i.e., equivalence class of the knowledge relation).  The formula $\nA\langle\peq\rangle[\peq]\phi$ means that $\phi$ is believed {\em at all states in the model}.    We are implicitly assuming that there is only one ``information cell" for the agent, namely the set of all states $W$.   In  this case, the universal modality $\nA$ is best interpreted as  ``knowledge".}   This may look like a simple technical ploy, but  Baltag and Smets \cite{baltag-smets} interpret $[\peq]\phi$ independently as ``a {\em safe belief} in $\phi$". Following  \cite{stalnaker-bi}, they show that this amounts to the beliefs the agent retains under all new true information about the actual world.\footnote{For the same notion in the computer science literature on agency, see \cite{masbook}.  Also, see \cite{stalnaker-logknowbel} for related discussions. }   Our first observation is that our belief operator $[B]$ is not similarly definable over the class of all evidence models.

\begin{lemma}
The modality $\nB$ is not definable in terms of $\nA,\nc,\nl$, even over uniform intended evidence models.
\end{lemma}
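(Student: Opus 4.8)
The natural strategy is to exhibit two uniform intended evidence models $\M_1 = \langle W, E, V\rangle^\vartriangle$ and $\M_2 = \langle W', E', V'\rangle^\vartriangle$ together with a bijection (or more generally a pair of points, one in each model) that agrees on all formulas built from $\nA, \nc, \nl$ but disagrees on some formula $\nB\phi$. Since $\nA$, $\nc$ and $\nl$ depend only on $W$, $E$ and the derived order $\peq_E$ (and the valuation), while $\nB$ depends on $B_E$, I want two evidence structures with ``the same'' neighborhood-and-plausibility behavior but genuinely different scenario-intersections. The cleanest way to guarantee indistinguishability for $\nA,\nc,\nl$ is to make the $(W,E,\peq_E,V)$-reducts of $\M_1^\vartriangle$ and $\M_2^\vartriangle$ isomorphic (or connected by a p-morphism in the smaller signature $A E {\peq}$, invoking Theorem \ref{bistheo}); then any difference must show up in $B_E$.

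Concretely, I would look for a uniform evidence model in which changing the evidence family keeps both $\bigcup$-structure and the specialization order fixed but alters $\bigcap\X$ over scenarios. A promising template: take $W = \mathbb N$ (or a small finite set will not suffice here precisely because finite intended models are flat and rather rigid, so an infinite $W$, or at least a carefully chosen finite one with enough room, is advisable). For $\M_1$ use an evidence family whose unique scenario has empty intersection — e.g. the tails $\{[N,\infty) : N\in\mathbb N\}\cup\{W\}$ as in $\M_\infty$ above — so $\nB\bot$ holds; for $\M_2$ use a family with the same specialization preorder (which for the tails is the usual $\leq$ on $\mathbb N$, up to the direction convention) but whose scenarios have nonempty intersection, so $\pB\top$ holds. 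The subtlety is that fixing $\peq_E$ essentially fixes a lot of $E$'s ``shape,'' so I expect to need to add or delete sets that are unions of $\peq_E$-up-sets in a way that is invisible to $\nc$ as well; one checks $\nc$-equivalence by verifying the two families generate the same collection of supersets-closed truth conditions, i.e. for every definable (indeed every) $\phi$, some $X\in E_1(w)$ lies in $\truth\phi{}$ iff some $Y\in E_2(w)$ does. An even safer route: make the two evidence families literally equal as sets of neighborhoods but attached to different ground sets via a map that is a p-morphism for $A E {\peq}$ but not for $B$ — though with uniform models and derived relations this is delicate, so I would fall back on the explicit two-model construction.

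The main obstacle, then, is the tension between wanting the scenarios (hence $B_E$) to differ while keeping $E$ and $\peq_E$ indistinguishable by $\nc,\nl,\nA$: since $\peq_E$ is defined from $E$ and $\nc$ quantifies over all of $E(w)$, the two evidence families have limited freedom. I would resolve it by choosing the perturbation to consist of sets that are already ``$\nc$-redundant'' (each new evidence set $X$ is a union of $\peq_E$-closed sets and is $\subseteq$- or $\supseteq$-related to existing evidence in a way that adds no new $\nc\phi$) yet changes whether some maximal fip-family can be completed to one with empty intersection. After pinning down such a pair, the proof concludes routinely: verify $\M_1^\vartriangle,w \models \psi \iff \M_2^\vartriangle, w'\models\psi$ for all $\psi\in\L_{AE{\peq}}$ by induction (or by exhibiting the p-morphism and citing Theorem \ref{bistheo}), then exhibit the witnessing $\nB\phi$ (e.g. $\nB\bot$) true in one and false in the other, and note that both models are uniform intended evidence models by construction, contradicting any purported definition of $\nB$ in terms of $\nA,\nc,\nl$.
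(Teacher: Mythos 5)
Your overall strategy---two uniform intended models that agree on all formulas built from $\nA,\nc,\nl$ (via a p-morphism in the reduced signature and Theorem \ref{bistheo}) yet disagree on $\nB\bot$---is exactly the shape of the paper's argument, and your $\M_1$ (the tails model $\M_\infty$ of Corollary \ref{nonfin}) is the paper's witness. The gap is that the second model is never produced, and the concrete recipe you propose for producing it cannot work. If $\M_2$ is to have the same domain (or an isomorphic copy of it) and the \emph{same} specialization preorder as $\M_\infty$, namely $\le$ on $\mathbb N$, then its evidence family is completely determined: by Definition \ref{derived-plaus} every evidence set must be a $\peq$-up-set, the nonempty up-sets of $(\mathbb N,\le)$ are precisely the tails, and to ensure $n+1\not\peq_{E_2} n$ you need an evidence set containing $n+1$ but not $n$, which forces $[n+1,\infty)\in\E_2$ for every $n$; since $W\in\E_2$ is required anyway, $\E_2$ is exactly the family of all tails, i.e.\ $\M_2=\M_\infty$. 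So there are no ``$\nc$-redundant'' perturbations to play with (unions of up-sets are again tails), and the tension you flag as the ``main obstacle'' is not resolvable along the route you fall back on. Your other suggestion---isomorphic $(W,E,\peq_E,V)$-reducts---can never separate $\nB$ either, since $B_E$ is derived from $E$ alone, so isomorphic evidence parts force isomorphic $B_E$.

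The missing idea is to give up keeping the evidence/plausibility structure fixed and instead exploit the empty valuation to \emph{collapse} the model: the map sending every $n\in\mathbb N$ to the unique point $w$ of the one-point intended model $\mathcal A$ with evidence family $\{\{w\}\}$ is a surjective p-morphism for the signature $AE\mathord\peq$ (all atoms are false on both sides, every tail maps onto $\{w\}$, and the $\peq$-clauses hold since both orders are reflexive), so by Theorem \ref{bistheo} the two models agree on every formula in the modalities $\nA,\nc,\nl$. But $\M_\infty^\vartriangle\models\nB\bot$, while $\mathcal A^\vartriangle$ is finite, hence flat, and so satisfies $\pB\top$. This is precisely the paper's proof; your plan stalls exactly at the point where this collapse onto a smaller, non-isomorphic model is needed.
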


\proof
The model $\mathcal M_\infty$ from Corollary \ref{nonfin} will do. Observe that $\mathcal M^\vartriangle_\infty$ is bisimilar to the trivial model $\mathcal A^\vartriangle$ where $\mathcal A$ has one point $w$ and one evidence set $\{w\}$ (simply because all propositional variables are false). Yet $\mathcal M^\vartriangle_\infty\models \nB\bot$ while the model $\mathcal A^\vartriangle$ is finite, and hence it validates $ \nB\top$.
\endproof

The situation changes over the class of flat models. Here the following notion will be useful later:

\begin{definition}
Given a uniform evidence model $\mathcal M=\langle W,\mathcal E,V\rangle$, write $\E[w]=\{X\in\mathcal E\ |\ w\in X\}$.
\end{definition}
This construction suggests an extension to our language that we will briefly discuss before moving on to the main result of this section. While what follows can be skipped without loss of continuity, it does give a more concrete idea of the many natural kinds of evidence structure to be found in our models.

\paragraph{Digression: reliable and unreliable evidence} Suppose that  $\M=\langle W, \E,V\rangle$ is  a uniform evidence model.   Then  $\E[w]$ is the  set of evidence that is ``correct" or reliable at state $w$. Unlike the agent's full evidential state at $w$, the set of reliable evidence at $w$ can always be consistently combined, suggesting  a new modality  $[C]\phi$ meaning ``the agent's reliable evidence entails $\phi$" or, for lack of a better term,  ``$\phi$ is reliably believed".  The formal definition is: 
$$ \M,w\models[C]\phi\text{ iff  for all $v\in \bigcap \E[w]$, $\M,v\models\phi$}$$

There is one technical issue we need to address here, which has interesting consequences.  Note that the set of reliable evidence differs from state to state.  That is, even if the evidence function is assumed to be constant, the {\em reliable} evidence function   will not be constant.  This means that, unlike plain belief, even in  uniform evidence models, reliable belief does not satisfy introspection properties.  So, what type of doxastic attitude is $\Box^C$?  We do not have the space here for an extensive discussion, but here are a few comments.  
 \medskip

First, notice that $\Box^C$ validates the truth axiom ($\Box^C\phi\rightarrow\phi$), so believing something based on reliable evidence implies truth. Of course, it is only the modeler, from a third person perspective, who can actually determine what the agent believes based only on reliable evidence.   Since the agent does not have access to the actual world, she herself cannot determine which evidence is reliable and which is not.  Second, the restriction to   ``truthful" evidence suggests that reliable belief might be {\em  safe belief} on evidence models.  However, the derived plausibility ordering is typically not connected, and   on such models,  safe belief quantifies over all worlds not strictly less plausible than the current world. In particular, if $\phi$ is safely believed at a world $w$ in a non-connected plausibility model, then $\phi$ must be true at all worlds that are incomparable with $w$. Now, for a derived plausibility ordering, there are two reasons why a state $v$ may be incomparable to the actual state $w$: either there is reliable evidence not containing $v$, or there is evidence containing $v$ but not $w$.   This suggests yet another modality.

Let $\M=\langle W, E, V\rangle$ be an evidence model and define $E^U(w)=\{X\in E(w)\ |\ w\not\in X\}$.  This is the {\em unreliable} or incorrect evidence at state $w$.  The corresponding modality is $\Box^U$: ``$\phi$ follows from the unreliable evidence at $w$".   Of course, the agent cannot necessarily consistently combine this evidence, but in the formal definition we can take the {\em union} of these sets: 
  $$ \M,w\models[U]\phi\text{ iff  for all $v\in \bigcup E^U(w)$, $\M,v\models\phi$}$$
It is the  conjunction of these two operators  that corresponds to safe belief on evidence models.

These new operators are not definable in our basic language $\L$.

\begin{fact} The operators $[C]$ and $[U]$ are not definable in evidence belief language $\L_{ABE\peq}$. 
\end{fact}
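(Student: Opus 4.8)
The plan is to exhibit, for each of the two operators separately, a pair of evidence models together with a bisimulation (or, more carefully, a p-morphism in the sense of Definition \ref{bisim}, which preserves truth of all $\L_{ABE\peq}$-formulas by Theorem \ref{bistheo}) that fails to preserve the relevant operator. Since $[C]$ and $[U]$ depend on which evidence sets contain the \emph{actual} world, the idea is to build two models whose $\L$-theories coincide at a designated pair of related worlds, but where the ``local'' evidence structure $\E[w]$ (respectively $E^U(w)$) differs between the two worlds. The cleanest route is to reuse the phenomenon already exploited in the proof that $\nB$ is not definable: take a model that is $\L$-bisimilar to a trivial one-point model (so that all formulas of $\L$ agree), yet where the reliability structure is non-trivial.

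Concretely, for $[C]$ I would take $\mo M=\langle W,\E,V\rangle$ with $W=\{w,v\}$, $\E=\{\{v\},W\}$ (which is a legitimate uniform evidence model: no empty set, $W\in\E$), and $V=\varnothing$, and compare it with the one-point model $\mo A$ on $\{u\}$ with $\E_{\mo A}=\{\{u\}\}$. The map sending both $w$ and $v$ to $u$ is a surjective p-morphism between the intended models $\mo M^\vartriangle$ and $\mo A^\vartriangle$: the atoms condition holds since $V$ is empty, and the evidence back-and-forth conditions hold because $\pi[\{v\}]=\pi[W]=\{u\}$ and $\E_{\mo A}=\{\{u\}\}$; the derived $B_E$ and $\peq_E$ conditions then follow since they are definable from $E$, or one checks them directly. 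Hence $\truth{\phi}{\mo M^\vartriangle}=\pi^{-1}\truth{\phi}{\mo A^\vartriangle}$ for every $\phi\in\L$. But $\E[w]=\{W\}$, so $\bigcap\E[w]=W$ and $\mo M^\vartriangle,w\models[C]p\leftrightarrow\nA p$, whereas $\E[v]=\{\{v\},W\}$, so $\bigcap\E[v]=\{v\}$; choosing the valuation to make $p$ true only at $v$ (a harmless change, since we can instead compare truth of a formula like $\neg[C]\bot$ — note $[C]\bot$ fails at $v$ but, if we adjust so that $\{v\}\notin\E$... ) — more simply: take $V(p)=\{v\}$; then $w$ and $v$ are still $\L$-bisimilar to each other is false, so instead I keep $V=\varnothing$ and observe $[C]\top$ holds everywhere trivially, which defeats this instance. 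The working instance is therefore to pick $\mo M$ so that $\bigcap\E[w]=\varnothing$ at some world: e.g. two worlds $w,v$ with $\E=\{\{w\},\{v\},W\}$, $V=\varnothing$; this is uniform-bisimilar to the one-point model, yet $\bigcap\E[w]=\{w\}\ne\varnothing$ while in the one-point image $\bigcap\E[u]=\{u\}$ — again $[C]\top$ everywhere. I therefore expect the \emph{actual} witnessing models to require a propositional variable, and the main work is to arrange two models that are $\L$-bisimilar \emph{but where a fixed formula of $\L$ cannot separate the two worlds while $[C]$ (resp.\ $[U]$) can}; the standard trick is to take two \emph{disjoint isomorphic copies} of a structure, plus one extra world $w_0$ whose $E(w_0)$ contains a set meeting only one copy, glued by a p-morphism that identifies the copies — the $\L$-theory at $w_0$ is unchanged by the gluing, but $\bigcap\E[w_0]$ (resp.\ $\bigcup E^U(w_0)$) is not.

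For $[U]$ the construction is dual: build a model where some world $w$ has $E^U(w)=\varnothing$ so that $[U]\phi$ holds vacuously for all $\phi$, versus a p-morphic image (or preimage) in which the corresponding world has non-trivial unreliable evidence, so that $[U]\bot$ fails there; since all of $\L$ is preserved, $[U]$ cannot be expressed. Concretely one can take $E(w)=\{W\}$ at one world (reliable, since $w\in W$) against a world $v$ with $E(v)=\{X,W\}$ where $v\notin X$, with $X$ chosen so that the p-morphism collapsing the models leaves every $\L$-formula's truth value fixed at the relevant pair.

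The main obstacle, and the only genuinely delicate point, is the same in both cases: one must construct the two evidence models so that (i) they are genuinely $\L_{ABE\peq}$-equivalent at the witnessing worlds — which, given the subtle interaction of the derived $B_E$ and $\peq_E$ with $E$, is most safely guaranteed by exhibiting an explicit surjective p-morphism between the \emph{intended} models and invoking Theorem \ref{bistheo} — while simultaneously (ii) the reliability refinement $\E[\cdot]$ (resp.\ $E^U(\cdot)$) differs in a way that changes the truth of $[C]$ (resp.\ $[U]$) at a world. The tension is that p-morphisms are permitted to merge evidence sets (via $\mathsf{forth}_E/\mathsf{back}_E$), and merging can destroy precisely the ``does $w$ belong to $X$?'' information that $[C]$ and $[U]$ read off; so one has to verify carefully that the chosen $\pi$ satisfies the $E$-clauses (typically easy when $\pi$ is injective on each evidence set, or when $V=\varnothing$ trivializes atoms) while still collapsing two worlds with different local evidence. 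Once a correct pair of small finite models is found, the rest is a routine check of the p-morphism conditions and a one-line computation of $\bigcap\E[\cdot]$ and $\bigcup E^U(\cdot)$.
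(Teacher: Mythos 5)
You have identified exactly the paper's strategy: produce two evidence models linked by a surjective p-morphism, invoke Theorem \ref{bistheo} to get agreement on all of $\L_{ABE\peq}$ at a matched pair of worlds, and then show that $[C]$ (resp.\ $[U]$) distinguishes them. But the proposal never actually delivers the witnessing models for $[C]$. Your two concrete attempts are, by your own admission, defeated (correctly so: since $w\in\bigcap\E[w]$ always, $[C]\bot$ never holds and with $V=\varnothing$ formulas like $[C]\top$ separate nothing), and the closing ``two disjoint isomorphic copies plus an extra world'' recipe is left as a sketch with no specified models, no specified map, no verification of the $\mathsf{forth}_E$/$\mathsf{back}_E$ clauses, and no concrete formula $\phi$ for which $[C]\phi$ changes truth value across the p-morphism. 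Since exhibiting such a pair \emph{is} the entire content of the Fact, the proof is incomplete as it stands. The $[U]$ half is closer to correct: your idea of a world with $E^U(w)=\varnothing$ against a p-morphic preimage with nonempty unreliable evidence can indeed be completed (e.g.\ the uniform model $W=\{v,t\}$, $\E=\{\{t\},W\}$, $V=\varnothing$ maps onto the one-point model, and $[U]\bot$ holds at the image but fails at $v$), but again you do not check the evidence clauses or fix the separating formula.

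For comparison, the paper handles both operators simultaneously with one small pair of uniform models using a single proposition letter, confirming your instinct that a variable is needed: take $\M$ with worlds $w,m,t$, $p$ true only at $w$, and $\E=\{\{w\},\{w,m\},\{t\}\}$; and $\M'$ with worlds $v,m',a,b$, $p$ true at $v$ and $a$, and $\E'=\{\{v,m'\},\{a\},\{b\}\}$. The map sending $v,a\mapsto w$, $m'\mapsto m$, $b\mapsto t$ is a surjective p-morphism, so $(\M,w)$ and $(\M',v)$ agree on all of $\L_{ABE\peq}$; yet $\bigcap\E[w]=\{w\}$ and $\bigcup E^U(w)=\{t\}$ give $\M,w\models[C]p\wedge[U]\neg p$, while $\bigcap\E'[v]=\{v,m'\}$ and $\bigcup E'^U(v)=\{a,b\}$ give $\M',v\models\neg[C]p\wedge\neg[U]\neg p$. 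To repair your write-up, you need to supply (and check) some such explicit pair; the surrounding appeals to Theorem \ref{bistheo} are then exactly as in the paper.
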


\begin{proof}  Let $\M$ and $\M'$ be uniform evidence models with the evidence sets: 

\begin{center}

\begin{tikzpicture}[scale=0.9]
\node[circle,draw,fill=gray!30,opacity=0.8,minimum width=0.4in] at (-0.1,0) {$ $}; 
\node[ellipse,draw,fill=gray!30,opacity=0.6,minimum width=0.6in, inner sep=6mm, minimum height=1in] at (-0.1,0.55) {$ $}; 

\node[circle, draw,fill=gray!30,opacity=0.6,minimum width=0.4in] at (-0.2,2.9) {$ $}; 
\draw[fill] (0,0) circle (2pt); 
\node at (-0.3,0) {$p$}; 
\node at (0,-0.3) {$w$}; 

\draw[fill] (0,1.25) circle (2pt); 
\node at (-0.4,1.25) {$\neg p$}; 

\draw[fill] (0,2.8) circle(2pt); 
\node at(-0.4,2.8) {$\neg p$}; 

\node[ellipse,draw,fill=gray!30,opacity=0.6,minimum width=0.6in, inner sep=6mm, minimum height=1in] at (3.1,0.55) {$ $}; 
 
\node[circle,draw,fill=gray!30,opacity=0.6,minimum width=0.4in] at (3.1,3) {$ $}; 
\node[circle,draw,fill=gray!30,opacity=0.6,minimum width=0.4in] at (4.7,2.4) {$ $}; 

\draw[fill] (3,2.8) circle (2pt); 
\node at (3.3,2.8) {$p$};

\draw[fill] (4.5,2.3) circle (2pt); 
\node at (4.9,2.3) {$\neg p$}; 

\draw[fill] (3,0) circle (2pt); 
\node at (3.3,0) {$p$}; 
\node at (3,-0.4) {$v$};

\draw[fill] (3,1.25) circle (2pt); 
\node at (3.4,1.25) {$\neg p$}; 

 \path[-,dashed,draw] (0,0) to (3,0); 
 \path[-,dashed,draw] (0,0) to (3,2.6); 

\path[-,dashed,draw] (0,1.25) to (3.4,1.25); 
 \path[-,dashed,draw] (0,2.6) to[in=225,out=325] (4.5,2.3); 

\node at (-0.2,-1.2) {$\E$}; 
\node at (3.3, -1.2) {$\E'$}; 
\end{tikzpicture}
\end{center}
The dashed line is a   $p$-morphisim, so  $\M \gg \M'$.        So, $\M,w$ and $\M',v$ satisfy the same formulas of $\L_{ABE\peq}$.  However, we have\footnote{In general,  it need not be the case that the agent reliably believes $\phi$ iff the agent unreliably believes $\neg\phi$.   We do not discuss the logic of these operators here, but the reader might note that the set of reliable evidence  at a state is a {\em filter} while the set of unreliable evidence is an {\em ideal}.} $\M,w\models[C] p \wedge [U] \neg p$ while $\M',v\models\neg[C]p\wedge\neg[U] \neg p$.  
\end{proof}

\medskip

This ends our digression on reliable and unreliable evidence as an example of richer languages supported by our evidence models. This theme of language extensions will return in Section 7, but for now, we return to the earlier source of variety, the existence of natural subclasses of evidence models.
\medskip
 
We are now ready to clarify the relationship between uniform,  flat evidence models and plausibility models.  It will  be helpful to start with the following observation: 

\begin{lemma}\label{mfipmax}
If $\mathcal M=\langle W,\mathcal E,V\rangle$ is a uniform evidence model, then every $w\in B_\mathcal E$ is maximal and every non-empty scenario $\mathcal X$ is of the form $\mathcal E[w]$ for some $w \in B_\mathcal E$, so that
\[\bigcap \mathcal X=\{v:w\peq_\mathcal E v\text{ and }v\peq_\mathcal E w\}.\]

If $\mathcal M$ is flat as well as uniform, then $B_\mathcal E$ is exactly the set of maximal points and every collection of the form $\mathcal E[w]$ with $w$ $\peq_\mathcal E$-maximal is a scenario.
\end{lemma}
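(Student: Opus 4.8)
The plan is to reduce everything to two facts made available by uniformity. Recall the notation $\E[w]=\{X\in\E : w\in X\}$. Since in a uniform model $E(u)=\E$ for every $u$, the relation $w\peq_\E v$ says exactly that every member of $\E$ containing $w$ also contains $v$; equivalently $\E[w]\subseteq\E[v]$, equivalently $v\in\bigcap\E[w]$. Note also that each $\E[w]$ has the finite intersection property, as $w$ belongs to all of its members.

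The key lemma I would prove first is: \emph{if $\X$ is a scenario and $w\in\bigcap\X$, then $\X=\E[w]$.} The inclusion $\X\subseteq\E[w]$ is immediate, since $w\in\bigcap\X\subseteq X$ for each $X\in\X$. For the converse, if $X\in\E[w]$ then every finite subfamily of $\X\cup\{X\}$ contains $w$ in its intersection (because $w$ lies in $X$ and in each member of $\X$), so $\X\cup\{X\}$ has the finite intersection property, and maximality of $\X$ puts $X$ in $\X$. Everything in the first half then follows. If $w\in B_\E$, fix a scenario $\X$ with $w\in\bigcap\X$, so $\X=\E[w]$; if moreover $w\peq_\E v$, then $v$ lies in every member of $\E[w]$, so for any $Y\in\E[v]$ the family $\E[w]\cup\{Y\}$ again has the finite intersection property --- witnessed now by $v$ --- and hence $Y\in\X=\E[w]$, i.e.\ $w\in Y$; this shows $\E[v]\subseteq\E[w]$, i.e.\ $v\peq_\E w$, so $w$ is $\peq_\E$-maximal. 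For a non-empty scenario $\X$, pick any $w\in\bigcap\X$: then $w\in B_\E$ by definition and $\X=\E[w]$ by the key lemma, while $\bigcap\X=\bigcap\E[w]=\{v:w\peq_\E v\}$, which coincides with $\{v:w\peq_\E v\text{ and }v\peq_\E w\}$ exactly because $w$ is maximal.

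For the flat case, the goal is to show that $\E[w]$ is a scenario whenever $w$ is $\peq_\E$-maximal; this yields $w\in\bigcap\E[w]\subseteq B_\E$, and combined with the first half it gives that $B_\E$ is precisely the set of $\peq_\E$-maximal points. Since $\E[w]$ has the finite intersection property, Zorn's Lemma extends it to a scenario $\X\supseteq\E[w]$. The use of flatness is to conclude that $\X$ has \emph{non-empty} intersection; granting this, pick $u\in\bigcap\X$, so $\X=\E[u]$ by the key lemma, hence $\E[w]\subseteq\E[u]$, i.e.\ $w\peq_\E u$, and maximality of $w$ forces $u\peq_\E w$, so $\E[u]\subseteq\E[w]$ and therefore $\E[w]=\E[u]=\X$ is a scenario.

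The step requiring the most care is this last appeal to flatness to see that the scenario $\X$ extending $\E[w]$ has non-empty intersection; it is the same property of flat models that is used in the proof of Lemma~\ref{flatax}. Once that is granted, the remainder is routine manipulation of the finite intersection property via the key lemma, and no further hypotheses on $\M$ are needed.
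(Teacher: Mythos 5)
Your proof is correct and follows essentially the same route as the paper's: the same key observation that a scenario whose intersection contains $w$ must equal $\mathcal E[w]$ (by maximality with respect to the finite intersection property), the same identification of $\peq_\mathcal E$ with inclusion of the families $\mathcal E[\cdot]$ under uniformity, and the same Zorn-plus-flatness argument for the converse direction. The only differences are cosmetic rearrangements (e.g., you conclude $\mathcal X=\mathcal E[u]$ and then collapse $u$ to $w$ by maximality, where the paper argues by contradiction from $w\notin\bigcap\mathcal X$), and your appeal to flatness to get $\bigcap\mathcal X\neq\varnothing$ is exactly the paper's own use of it.
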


\proof
First assume that $\mathcal M$ is a uniform evidence model and $w\in\bigcap \mathcal X$ for some scenario $\mathcal X$. It follows that $\mathcal X\subseteq \mathcal E[w]$ (since $w$ lies in every element of $\mathcal X$), and by maximality of $\mathcal X$, $\mathcal X=\mathcal E[w]$. But if we had $w\prec_\mathcal E v$ for some $v$, then clearly $\mathcal E[w]\subsetneq\mathcal E[v]$, which would contradict the maximality of $\mathcal X$ and thus $w$ is $\peq_\mathcal E$-maximal.

Furthermore, the elements of $\bigcap \mathcal E[w]$ are precisely those $v$ such that $w\peq_\mathcal E v$ (by definition of $\peq_\mathcal E$ and $\mathcal E[w]$), but from the maximality of $w$ we have that $v\peq_\mathcal E w$ as well.

For the other direction, assume $\mathcal M$ is flat and uniform and $w$ is maximal. By Zorn's lemma, $\mathcal E[w]$ can be extended to a scenario $\mathcal X$. By flatness, we have that $\bigcap \mathcal X\not=\varnothing$. If $w\not\in \bigcap \mathcal X$, there is some $v\in\bigcap \mathcal X\subseteq \bigcap\mathcal E[w]$. But then $v$ lies in every evidence set containing $w$, yet there is some $Y\in \mathcal X$ with $v\in Y$ and $w\not\in Y$, so by definition $w\prec_\mathcal E v$, which contradicts the maximality of $w$. We conclude that $\mathcal X=\bigcap \mathcal E[w]$, and thus $w\in B_\mathcal E$.
\endproof

With this, we may describe the plausibility orders of evidence models.

\begin{definition}
Let $\peq$ be a plausibility order over $W$. Say $D\subseteq W$ is {\em directed} if any two elements of $D$
have an upper bound in $D$.

A plausibility order $\peq$ satisfies the {\em boundendess condition} if every directed set $D$ has an upper bound (not necessarily in $D$).
\end{definition}

\begin{lemma}\label{flatisdir}
If an evidence model is flat, then its derived plausibility relation satisfies the boundedness condition.
\end{lemma}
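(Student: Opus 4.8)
The goal is to show that if $\mathcal M = \langle W, E, V\rangle$ is a flat evidence model, then $\peq_E$ satisfies the boundedness condition: every directed $D \subseteq W$ has an upper bound. The natural strategy is to extract, from the directed set $D$, a family of evidence sets with the finite intersection property, extend it to a scenario, and use flatness to find a point in the intersection of the scenario — which will turn out to be an upper bound for $D$.

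\textbf{Key steps.} First I would recall that by Definition \ref{derived-plaus}, $w \peq_E v$ holds iff every evidence set (at any world) containing $w$ also contains $v$; equivalently, writing $\E[w] = \{X : w \in X \in E(u)\text{ for some }u\}$ for the family of all evidence sets containing $w$, we have $w \peq_E v$ iff $v \in \bigcap \E[w]$, i.e. iff $\E[w] \subseteq \E[v]$. So an upper bound of $D$ is exactly a point $v$ with $v \in \bigcap_{w \in D} \bigcap \E[w] = \bigcap \bigl(\bigcup_{w\in D}\E[w]\bigr)$. Thus it suffices to show the family $\F := \bigcup_{w \in D} \E[w]$ has nonempty intersection. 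Second, I would check $\F$ has the finite intersection property: given $X_1, \dots, X_n \in \F$ with $X_i \in \E[w_i]$ (so $w_i \in X_i$), use directedness of $D$ to find $u \in D$ above all the $w_i$; then $w_i \peq_E u$ gives $u \in X_i$ for each $i$ by the characterization above, so $u \in \bigcap_i X_i \neq \varnothing$. Note that $\F$ is a subfamily of $E(u)$ for any fixed $u \in D$ — wait, this needs care: $\E[w]$ collects evidence sets from all worlds, but the scenario machinery of Definition \ref{fip} is about $w$-scenarios, i.e. subfamilies of a single $E(w)$. Here I would instead observe that $\mathcal M$ need not be uniform, so I should work with $\F$ as a family with the fip and pick any $u_0 \in D$: by Constraint 2 of Definition \ref{ev-model} and the characterization, for each $w \in D$ we in fact have $w \peq_E u_0$ (directedness again, taking an upper bound of $w$ and $u_0$, then... hmm, that gives a common upper bound, not $w \peq_E u_0$).

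\textbf{Cleaner route and the main obstacle.} The subtlety just flagged is the crux: scenarios are defined relative to a single world's evidence family, so I cannot directly feed $\F$ (a mixed family) into Definition \ref{fip}. The fix is to fix one element $d_0 \in D$ and restrict attention to $\E[d_0] \cap E(d_0)$, extend it by Zorn's Lemma to a $d_0$-scenario $\mathcal X \subseteq E(d_0)$, and invoke flatness (Lemma \ref{flatax}'s argument: every scenario in a flat model has nonempty intersection) to get some $v \in \bigcap \mathcal X$. Then $d_0 \peq_E v$ since $v$ lies in every element of $\E[d_0] \subseteq \mathcal X$ — but I still need $v$ to dominate \emph{every} $w \in D$, and $\mathcal X$ only "sees" evidence sets through $E(d_0)$, so this only shows $v$ is an upper bound of $d_0$, not of $D$. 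Therefore the correct argument must genuinely use the full family $\F$: I would prove directly that $\F$ has the fip (as in the second step above, via directedness), then note that by flatness every fip family of evidence sets can be extended to a scenario with nonempty intersection — or more carefully, since $\F$ may not sit inside a single $E(w)$, I would instead argue that directedness lets me replace $D$ by a single ``limit'' via the following: the set $\bigcap \F$ is nonempty iff some maximal fip subfamily has nonempty intersection, and flatness guarantees this for scenarios; I would check that the relevant maximality/Zorn argument goes through for $\F$ by the same reasoning as Lemma \ref{flatax}, using only that $\F \subseteq \bigcup_{w} E(w)$ consists of nonempty sets and has the fip. Picking $v \in \bigcap \F$ then gives $w \peq_E v$ for all $w \in D$ by the characterization of $\peq_E$, so $v$ is the desired upper bound. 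The main obstacle is thus reconciling the ``global'' family $\F$ with the ``local'' (single-world) definition of scenarios, and I expect the resolution to be either (i) observing that for directed $D$ one may assume $D$ is a chain or even replace it by cofinal structure so that a single world's evidence suffices, or (ii) generalizing the flatness consequence from $w$-scenarios to arbitrary maximal-fip families of evidence sets, which holds by the identical Zorn's-Lemma-plus-flatness argument.
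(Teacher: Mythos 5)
Your proposal is, at its core, the paper's own proof: your family $\mathcal F$ is exactly the paper's $\mathcal Y=\{Y: Y\cap D\neq\varnothing\}$, your finite-intersection-property argument (take an upper bound in $D$ of the finitely many witnesses $w_i$ and push it into each $X_i$ via $\peq_E$) is the paper's, and your closing observation that any point lying in every member of $\mathcal F$ dominates all of $D$ is the paper's last line. The step you leave unresolved --- getting a point in the intersection of a maximal fip extension of $\mathcal F$ out of flatness --- is exactly where the paper is terse: it extends $\mathcal Y$ by Zorn's Lemma to \emph{a scenario} and invokes flatness in the form ``every scenario has non-empty intersection'' (the same form used in Lemmas \ref{flatax} and \ref{mfipmax}). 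That extension typechecks because in this part of Section \ref{belplaus} the evidence models are uniform: all evidence sets lie in a single constant family $\mathcal E$, so $\mathcal F\subseteq\mathcal E$ and a maximal fip subfamily of $\mathcal E$ containing $\mathcal F$ is literally a scenario. Read in the uniform setting, your option (ii) is not a generalization at all but precisely this argument, and your proposal is then complete and identical in substance to the paper's.

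In the genuinely non-uniform setting, however, your hesitation is warranted and cannot be patched: flatness is a per-world condition and gives no control over maximal fip subfamilies of the global pool $\bigcup_{w}E(w)$, so the claim in (ii) that the Zorn-plus-flatness argument transfers ``identically'' is unjustified, and (i) fails as well, since no single world's evidence family need contain all sets meeting $D$. In fact the local-to-global issue you flagged is substantive, not bookkeeping: take $W=\mathbb N\cup\{a\}$ with $E(n)=\{W\}\cup\{[m,\infty)\ |\ m\le n\}$ and $E(a)=\{W\}$. Each $E(w)$ is a finite nested family, so every scenario has non-empty intersection and the model is flat; yet $n\peq_E k$ holds iff $n\le k$ and no world lies above all of $\mathbb N$ (an upper bound would have to belong to every $[m,\infty)$), so the directed set $\mathbb N$ is unbounded. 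Thus the correct resolution of your ``main obstacle'' is neither of your two guesses but the standing uniformity assumption under which the lemma is stated and applied (cf.\ Lemma \ref{mfipmax}, Corollary \ref{CorFlatMax} and Theorem \ref{flatunitheo}); with that in place, your argument goes through verbatim.
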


\proof
Assume that $\mathcal M$ is flat and let $D$ be any directed set. Consider the family $\mathcal Y=\{Y:Y\cap D \not=\varnothing\}$. We claim that $\mathcal Y$ has the fip. Indeed, let $Y_1,\hdots, Y_n\in \mathcal Y$ and $y_i\in Y_i \cap D$. Let $y$ be an upper bound for all $y_i$ (it is an easy exercise to show that directed sets have upper bounds for finite subsets). Then, by definition, $y\in \bigcap_{i\leq n}Y_i$.

Thus $\mathcal Y$ can be extended to a scenario $\mathcal X$. Since $\mathcal W$ is flat, then $\bigcap\mathcal X$ is non-empty. But every $w\in\bigcap\mathcal X$ is an upper bound for $D$.
\endproof

\begin{corollary}\label{CorFlatMax}
If $\mathcal M$ is flat and $\peq$ is its derived plausibility relation then for every $w$ there is $v\seq w$ such that $v$ is maximal.
\end{corollary}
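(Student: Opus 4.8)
The plan is to deduce this directly from Lemma \ref{flatisdir} by a routine application of Zorn's Lemma. Fix $w\in W$ and consider the up-set $U_w=\{v\in W : w\peq v\}$, endowed with the restriction of $\peq$. Since $\peq$ is reflexive, $w\in U_w$, so $U_w$ is non-empty. I would then aim to produce a $\peq$-maximal element of $U_w$ and show that it is in fact maximal in all of $W$.

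The key step is to verify the hypothesis of Zorn's Lemma: every chain in $U_w$ has an upper bound lying in $U_w$. Given a chain $C\subseteq U_w$, note that $C\cup\{w\}$ is still a chain (as $w\peq v$ for every $v\in C$) and hence a directed subset of $W$. By Lemma \ref{flatisdir}, flatness guarantees that this directed set has an upper bound $u\in W$; since $w\in C\cup\{w\}$ we get $w\peq u$, so $u\in U_w$, and $u$ bounds $C$. Thus Zorn's Lemma applies and yields a $\peq$-maximal element $v$ of $U_w$, which by construction satisfies $w\peq v$, i.e. $v\seq w$.

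It remains to check that $v$ is $\peq$-maximal in the whole model, not merely within $U_w$. If $v\peq v'$ for some $v'\in W$, then transitivity gives $w\peq v'$, so $v'\in U_w$, and maximality of $v$ in $U_w$ forces $v'\peq v$; hence $v$ is $\peq$-maximal in $W$, completing the argument.

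I do not anticipate a real obstacle here: the only points requiring any care are that $\peq$ is a preorder rather than a partial order (so one works with "maximal" in the sense that nothing strictly dominates $v$), that chains must be enlarged by $w$ to keep their upper bounds inside $U_w$, and that transitivity of $\peq$ is what transfers maximality from $U_w$ back to $W$.
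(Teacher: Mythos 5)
Your proof is correct and takes essentially the same route as the paper: the paper's own (very terse) argument also combines Lemma \ref{flatisdir} with Zorn's Lemma to find maximal elements above any given $w$. You have simply made explicit the details the paper leaves implicit (working in the up-set of $w$, adjoining $w$ to a chain so its upper bound stays in the up-set, and using transitivity to transfer maximality back to all of $W$), and these are all handled correctly.
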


\proof
If $\peq$ is directed it satisfies the conditions of Zorn's lemma, and we may use it to find maximal elements above any given $w$.
\endproof

Now we are ready to show that belief is definable over flat models:

\begin{theorem}\label{flatunitheo}
Over the class of uniform evidence models with derived plausibility relation, $\nA\pl\nl\varphi$ implies $\nB\varphi$.

Over the class of models that are moreover flat, they are equivalent.
\end{theorem}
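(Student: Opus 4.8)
The plan is to pass to the extended model $\mo{M}^\vartriangle=\langle W,\E,B_\E,\peq_\E,V\rangle$ and prove the two implications separately, using Lemma~\ref{mfipmax} to pin down the shape of $B_\E$ and $\peq_\E$, and Corollary~\ref{CorFlatMax} in the flat case. Two observations about uniform models will be used repeatedly: a $w$-scenario is just a maximal subfamily of $\E$ with the fip, so the notion does not depend on $w$; and $u\in\bigcap\E[w]$ holds precisely when $w\peq_\E u$, directly from the definitions of $\peq_\E$ and $\E[w]$.

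\emph{From $\nA\pl\nl\varphi$ to $\nB\varphi$ (uniform case).} Assume $\mo{M}^\vartriangle\models\nA\pl\nl\varphi$ and fix $w$; it suffices to show every $B_\E$-successor of $w$ satisfies $\varphi$. So suppose $w\,B_\E\,v$. By Lemma~\ref{mfipmax}, $v$ is $\peq_\E$-maximal. Since $\pl\nl\varphi$ holds at every world, in particular $\mo{M}^\vartriangle,v\models\pl\nl\varphi$, so there is $u$ with $v\peq_\E u$ and $\mo{M}^\vartriangle,u\models\nl\varphi$. Maximality of $v$ forces $u\peq_\E v$, and then $\nl\varphi$ at $u$ gives $\mo{M}^\vartriangle,v\models\varphi$. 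As $v$ and $w$ were arbitrary, $\mo{M}^\vartriangle\models\nB\varphi$.

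\emph{From $\nB\varphi$ to $\nA\pl\nl\varphi$ (flat uniform case).} Assume $\mo{M}^\vartriangle\models\nB\varphi$ and fix $w$; I must find $v$ with $w\peq_\E v$ and $\mo{M}^\vartriangle,v\models\nl\varphi$. By flatness, Corollary~\ref{CorFlatMax} provides a $\peq_\E$-maximal $v$ with $w\peq_\E v$, and by the flat-uniform half of Lemma~\ref{mfipmax}, $\E[v]$ is a scenario. Now let $u$ be any world with $v\peq_\E u$; then $u\in\bigcap\E[v]$, and since $\E[v]$ is a scenario we get $z\,B_\E\,u$ for an arbitrary choice of $z$. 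Applying $\nB\varphi$ at such a $z$ yields $\mo{M}^\vartriangle,u\models\varphi$. Since $u$ ranges over all worlds $\peq_\E$-above $v$, this is precisely $\mo{M}^\vartriangle,v\models\nl\varphi$, whence $\mo{M}^\vartriangle,w\models\pl\nl\varphi$. As $w$ was arbitrary, $\mo{M}^\vartriangle\models\nA\pl\nl\varphi$.

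The genuinely routine parts are the translations of $\pl\nl$ into quantifiers over $\peq_\E$ and the bookkeeping with maximal points; the main obstacle is the converse direction, where it does not suffice to note that the chosen maximal $v$ lies in $B_\E$ and hence satisfies $\varphi$. One must show that \emph{every} world $u$ lying $\peq_\E$-above $v$ is a $B_\E$-successor of some world, and this is exactly where the clause of Lemma~\ref{mfipmax} guaranteeing that $\E[w]$ is a scenario for every maximal $w$, combined with the base-point independence of scenarios in uniform models, is needed.
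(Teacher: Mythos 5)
Your proof is correct and follows essentially the same route as the paper: the forward direction uses Lemma~\ref{mfipmax} to reduce to maximal points (where $\pl\nl\varphi$ forces $\varphi$), and the converse uses Corollary~\ref{CorFlatMax} to find a maximal $v\seq w$ and then the scenario $\E[v]$ from the flat-uniform half of Lemma~\ref{mfipmax} to conclude that every $u\seq v$ is a $B_\E$-successor and hence satisfies $\varphi$. The only difference is presentational: you make explicit the identification of the scenario as $\E[v]$ and the base-point independence of scenarios in uniform models, which the paper leaves implicit.
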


\proof
First assume that $\nA\pl\nl\varphi$ and let $\mathcal X$ be any scenario. Let $w\in\bigcap\mathcal X$. By Lemma \ref{mfipmax}, $w$ is maximal; but since it satisfies $\pl\nl\varphi$, it follows that $w$ satisfies $\varphi$.

Now assume that the model is flat and $\nB\varphi$ holds and let $w\in W$. We have to show that there is $v\seq w$ satisfying $\nl\varphi$. Use Corollary \ref{CorFlatMax} to find a maximal $v\seq w$. By Lemma \ref{mfipmax} we have that $v$ lies in $\bigcap \mathcal X$ for some scenario $\mathcal X$. Meanwhile, if $u\seq v$ then we also have $u\in\bigcap\mathcal X$, and by the assumption that $\nB\varphi$ holds, $u$ also satisfies $\varphi$, i.e. $v$ satisfies $\nl\varphi$, as desired.
\endproof

\section{The representation theorem}\label{representation}

It is generally convenient to work with models that are not necessarily intended, since it is easier to control accessibility relations than scenarios. Fortunately, as we shall show in this section, the two classes of models are equivalent with respect to our logics. For indeed, while it is not the case that every model is intended, we do have that every model is the $p$-morphic image of an intended model.

Our goal is then to define an intended model $\mo M^\triangledown$ given an arbitrary model $\mo M$. First, let us define our domain, which depends on the underlying logic. Below, recall that \[B[W]=\{w\in W:\exists v\in W\, \text{ such that } v\mathrel B w\}.\]

\begin{definition}
Let $\mathcal M=\langle W,E,B,\peq,V\rangle$ be a model. Define the following sets:

\begin{itemize}
\item $W^\triangledown=W^\triangledown_u$ is the set of all tuples $(w,x,f)$ where $w\in W$, $x\in B[W]\cup \mathbb N$ and $f:W\to\{0,1\}$.

\item $W^\triangledown_\flat$ is the set of all $(w,x,f)\in W^\triangledown$ with $x\in B[W]$.

\item $W^\triangledown_{\flat u}$ is the set of all $(w,0,f)$ such that $w\in W$ and $f:W\to \{0,1\}$.
\end{itemize}

Define $\pi(w,x,f)=w$.
\end{definition}

These will be the domains of our intended models. Note that $W^\triangledown_\lambda$ is defined even if $\mathcal M$ is not a $\lambda$-model, but in this case it will generally not be very useful. We remark that the parameter $0$ in the concise case is simply a dummy parameter which we use for uniformity with the other cases.

Let us begin with a simple observation about the size of $W^\triangledown$:

\begin{lemma}
The set $W^\triangledown_\lambda$ is finite if and only if $\mathcal M$ is finite and $\lambda$ is flat.
\end{lemma}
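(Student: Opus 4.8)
The plan is to analyze the three cases $\lambda\in\{u,\flat,\flat u\}$ separately according to the definition of $W^\triangledown_\lambda$, and in each case determine exactly when the set is finite. The key observation is that each element of $W^\triangledown_\lambda$ is a tuple whose last coordinate is a function $f:W\to\{0,1\}$, so the set of such functions has size $2^{|W|}$; this is finite precisely when $W$ is finite (and equals $1$ only in the degenerate case $W=\varnothing$, which is excluded since $W$ is non-empty, but still finite). Hence a necessary condition for $W^\triangledown_\lambda$ to be finite is that $W$ is finite, since $\pi$ composed with the third projection would otherwise produce infinitely many distinct functions. This already handles the ``only if'' direction as far as finiteness of $\mathcal M$ is concerned.

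Next I would examine the dependence on $\lambda$. In the concise case $\lambda=\flat u$, the element is $(w,0,f)$ with $w\in W$ and $f:W\to\{0,1\}$ arbitrary, so $|W^\triangledown_{\flat u}|=|W|\cdot 2^{|W|}$, which is finite if and only if $W$ is finite. Thus for $\lambda=\flat u$ the set is finite iff $\mathcal M$ is finite, and since $\flat u$ \emph{is} flat, this is consistent with the claim. In the flat (non-uniform) case $\lambda=\flat$, the element is $(w,x,f)$ with $x\in B[W]$; since $B[W]\subseteq W$, finiteness of $W$ bounds all three coordinates, so again $W^\triangledown_\flat$ is finite iff $W$ is finite, consistent with the claim. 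In the uniform case $\lambda=u$, however, the second coordinate ranges over $B[W]\cup\mathbb N$, which is \emph{always} infinite regardless of $\mathcal M$; hence $W^\triangledown_u$ is infinite no matter what, so it is never finite, and in particular it is not finite even when $\mathcal M$ is finite. This is exactly why the statement restricts to $\lambda$ flat: the uniform (but non-flat) construction injects a copy of $\mathbb N$ into the domain on purpose, for reasons that presumably surface in the completeness proof.

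Assembling these observations: if $\mathcal M$ is finite and $\lambda$ is flat (i.e.\ $\lambda\in\{\flat,\flat u\}$), then $W^\triangledown_\lambda$ is a finite product of finite sets, hence finite. Conversely, if $W^\triangledown_\lambda$ is finite, then (since the third coordinate ranges over all of $2^W$ and $W\neq\varnothing$) $W$ must be finite, and moreover $\lambda$ must be flat, since for $\lambda=u$ the second coordinate already forces infinitude via the $\mathbb N$ summand. The main (and really only) subtlety is bookkeeping: being careful that the ``flat'' cases are exactly $\flat$ and $\flat u$ and that in both of these the second coordinate lies in $B[W]\subseteq W$ rather than in $B[W]\cup\mathbb N$, and noting that a product is finite iff every factor is. There is no genuine obstacle here; the proof is a short case check, and I expect the write-up to be only a few lines.
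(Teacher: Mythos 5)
The paper states this lemma without proof, and your direct case check is exactly the intended ``simple observation'': the $\mathbb N$ summand in the second coordinate makes $W^\triangledown_\lambda$ infinite for the non-flat logics no matter what, while in the flat cases all three coordinates are bounded by $W$ (and the $2^{|W|}$ functions $f:W\to\{0,1\}$ force $W$ to be finite whenever $W^\triangledown_\lambda$ is). Two small bookkeeping points: your three-way split $\{u,\flat,\flat u\}$ silently covers the plain logic as well, since by definition $W^\triangledown=W^\triangledown_u$; and in the case $\lambda=\flat$ your ``finite iff $W$ finite'' tacitly assumes $B[W]\neq\varnothing$ --- for an infinite model with empty $B$ one would get $W^\triangledown_\flat=\varnothing$, which is finite. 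This degenerate case is harmless in the lemma's intended context, because there $\mathcal M$ is a flat $\lambda$-model, and flatness together with $W\in E(w)$ forces $B$ to be serial, so $B[W]\neq\varnothing$; it is worth one sentence in the write-up.
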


Before we define evidence sets, which are somewhat more elaborate, let us define the accessibility relations.


\begin{definition}
Given a model $\mathcal M=\langle W,E,B,\peq,V\rangle$ and an evidence logic $\lambda$, define a relation $\peq^\triangledown_\lambda$ on $W^\triangledown_\lambda$ by $(w,x,f)\peq^\triangledown_\lambda (v,y,g)$ if and only if $w\peq v$, either $x=y\in W$ or $x\leq y\in \mathbb N$, and $f\upharpoonright\mathop\uparrow v=g\upharpoonright\mathop\uparrow v$, where $\uparrow v=\{v'\ |\ v\peq v'\}$ is the {\em upset} of $v$.
\end{definition}

The function $\pi$ as we have defined it is a $p$-morphism on the restricted structures:

\begin{lemma}
The function $\pi:W^\triangledown_\lambda\to W$ is a surjective p-morphism with respect to the signature $\peq$.
\end{lemma}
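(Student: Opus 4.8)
The plan is to verify the four p-morphism conditions for the signature $\peq$ that are listed in Definition~\ref{bisim}, namely $\mathsf{atoms}$, $\mathsf{forth}_\peq$, $\mathsf{back}_\peq$, together with surjectivity of $\pi$. Surjectivity is immediate: given $w\in W$, pick any $f:W\to\{0,1\}$ (say the constant $0$ function) and any admissible second coordinate $x$ (in the concise case $x=0$; otherwise $x=0\in\mathbb N$, or if $B[W]\ne\varnothing$ any element thereof in the flat case — one should note $W^\triangledown_\flat$ may be empty if $B[W]=\varnothing$, but then $W$ itself contributes nothing relevant, or more carefully this case is handled by the convention that these constructions are only applied to $\lambda$-models, where $B[W]$ is suitably populated), so that $(w,x,f)\in W^\triangledown_\lambda$ and $\pi(w,x,f)=w$. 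For $\mathsf{atoms}$ we need $V^\triangledown=\pi^{-1}V$; since the valuation $V^\triangledown$ on $\mo M^\triangledown$ has not yet been defined in the excerpt, the intended reading is that it is defined precisely by $V^\triangledown(p)=\pi^{-1}[V(p)]$, making this condition hold by fiat — I would state it as such.

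Next I would check $\mathsf{forth}_\peq$: suppose $(w,x,f)\peq^\triangledown_\lambda(v,y,g)$. By the definition of $\peq^\triangledown_\lambda$ this entails in particular $w\peq v$, which is exactly $\pi(w,x,f)\peq\pi(v,y,g)$. So this direction is immediate from the first conjunct in the definition of the lifted order.

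The main work is $\mathsf{back}_\peq$: given $(w,x,f)\in W^\triangledown_\lambda$ and $u\in W$ with $\pi(w,x,f)=w\peq u$, I must produce $(u,y,g)\in W^\triangledown_\lambda$ with $(w,x,f)\peq^\triangledown_\lambda(u,y,g)$. The natural choice is to keep the second coordinate as small as possible and to define $g$ so it agrees with $f$ where required: set $y=x$ if $x\in B[W]$ (legitimate since in that case $x=y\in W$ is the allowed option), or $y=x\in\mathbb N$ otherwise (so $x\le y$ holds trivially); and define $g$ by $g(v')=f(v')$ for $v'\in\mathop\uparrow u$ and, say, $g(v')=f(v')$ everywhere (i.e.\ simply take $g=f$), which certainly satisfies $f\upharpoonright\mathop\uparrow u=g\upharpoonright\mathop\uparrow u$. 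I would then verify $(u,y,g)=(u,x,f)\in W^\triangledown_\lambda$: this is where one uses that $W^\triangledown_\lambda$ imposes no constraint linking the first coordinate to $x$ or $f$, so membership is preserved. Finally $(w,x,f)\peq^\triangledown_\lambda(u,x,f)$ holds because $w\peq u$ by hypothesis, $x=x$ (or $x\le x$), and $f\upharpoonright\mathop\uparrow u=f\upharpoonright\mathop\uparrow u$ trivially.

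The only genuine subtlety I anticipate is bookkeeping across the three cases $\lambda\in\{u,\flat,\flat u\}$ simultaneously — in particular making sure the second-coordinate choice lands in the correct set ($B[W]\cup\mathbb N$, $B[W]$, or $\{0\}$) and, in the flat cases, that $x\in B[W]$ is actually available, which relies on the standing assumption that we apply $\mo M^\triangledown_\lambda$ to an actual $\lambda$-model so the domain is non-degenerate; with $g=f$ and $y=x$ the transitivity/reflexivity checks on $\peq^\triangledown_\lambda$ are trivial, so there is no real obstacle beyond careful case management. I would present the argument uniformly, remarking once that the same witnesses work in each case because the constraints defining $W^\triangledown_\lambda$ never restrict the first coordinate relative to the other two.
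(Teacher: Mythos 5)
Your proposal is correct and follows essentially the same route as the paper: the $\mathsf{atoms}$ and $\mathsf{forth}_\peq$ conditions are immediate from the definitions (the valuation on $W^\triangledown_\lambda$ is indeed later defined as $\pi^{-1}V$ restricted to the domain), and for $\mathsf{back}_\peq$ the paper uses exactly your witness, namely $(v,x,f)$ with the second and third coordinates unchanged. Your extra bookkeeping about the second coordinate and the flat case is harmless but not needed beyond the observation you already make.
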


\begin{proof}
The ${\sf forth}$ and ${\sf atoms}$ conditions are straightforward from the definitions so we focus on ${\sf back}_\peq$. But if $w\peq v$ and $w=\pi(w,x,f)$, then $(w,x,f)\peq^\triangledown_\lambda(v,x,f)$ and $\pi(v,x,f)=v$.
\end{proof}

We can now define the neighborhoods we shall use.

\begin{definition}
We define the following sets:
\begin{enumerate}


\item Given $v\in W$, $f:W\to \{0,1\}$ and $X\subseteq W$, define $X^f(v)$ to be the set of all $(w,x,g)\in W^\triangledown_\flat$ such that $x=v$ and $g\upharpoonright\mathop\uparrow w=f\upharpoonright\mathop\uparrow w$.

\item Given $n\in \mathbb N$, $f:W\to \{0,1\}$ and $X\subseteq W$, define $X^f_\lambda(n)$ to be the set of all $(w,m,g)\in W^\triangledown_\lambda$ such that $m\geq n$ and $g\upharpoonright\mathop\uparrow w=f\upharpoonright\mathop\uparrow w$.
\end{enumerate}

Our evidence relation will be divided into several parts:

\begin{enumerate}
\item Given $(w,x,f)\in W^\triangledown$ and $v\in W$, let $E^v(w,x,f)$ be the collection of all sets of the form $X^g(v)$ such that $w\mathrel E X$ and $g\upharpoonright\uparrow v=f\upharpoonright\uparrow v$.

\item Given $(w,x,f)\in W^\triangledown$, $n\in \mathbb N$, let $E^n_\lambda(w,x,f)$ be the collection of all sets of the form $X^g_\lambda(n)$ such that $w\mathrel E X$ and $X$ is not flat for $w$. Note that $E^n_\flat(w,x,f)=\varnothing$.
\end{enumerate}

If $\mathcal M=\langle W,\mathcal E, B,\peq\rangle$ is uniform, define

\begin{enumerate}
\item Given $v\in W$, $\mathcal E^v$ to be the collection of all sets of the form $X^g(v)$ such that $v\in X\in\mathcal E$ and $g(v')=1$ for all $v'\seq v$.

\item Given $n\in \mathbb N$, $\mathcal E^n_u$ to be the collection of all sets of the form $X^g_\lambda(n)$ such that $X\in\mathcal E$ and $X$ is not flat.

\item Define $\mathcal E^0_{\flat u}$ to be the collection of all sets of the form $X^g_\lambda(0)$ such that $X\in\mathcal E$.
\end{enumerate}

For non-uniform $\lambda$ let
\[E^\triangledown_\lambda(w,x,f)= \{W^\triangledown_\lambda\}\cup \bigcup_{w\mathrel B v}E^v(w,x,f)\cup \bigcup_{n\in\mathbb N}E^n_\lambda(w,x,f).\]

Finally, let
\[\mathcal E^\triangledown_u=\{W^\triangledown_u\}\cup\bigcup_{v\in B}\mathcal E^v\cup\bigcup_{n\in\mathbb N}\mathcal E^n_u\text{ and }\mathcal E^\triangledown_{\flat u}=\{W^\triangledown_{\flat u}\}\cup\mathcal E^0_{\flat u}.\]
\end{definition}

We remark that $E^\triangledown_\lambda(w,x,f)$ does not depend on $x$ but we write it this way to conform to the definition of an evidence relation. It will take some work to check that the belief relation is intended, but less so to check that plausibility is.

\begin{lemma}\label{peqlemm}
If $\mo M=\langle W,E,B,\peq,V\rangle$ is any model then $\peq^\triangledown_\lambda=\peq_{E^\triangledown}$.

If moreover $\mo M$ is uniform then $\peq^\triangledown_\lambda=\peq_{\mathcal E^\triangledown}$ for uniform $\lambda$.
\end{lemma}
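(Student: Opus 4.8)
The plan is to unwind $\peq_{E^\triangledown}$ through Definition~\ref{derived-plaus}: $(w,x,f)\peq_{E^\triangledown}(v,y,g)$ holds iff $(v,y,g)$ lies in every set in $\bigcup_{s}E^\triangledown(s)$ containing $(w,x,f)$. Because ``$\peq$ is included in the specialization order'' is by definition the upward-closure property of Constraint~2 of Definition~\ref{ev-model}, the inclusion $\peq^\triangledown_\lambda\subseteq\peq_{E^\triangledown}$ reduces to: every basic neighborhood used to assemble $E^\triangledown_\lambda$ is upward closed under $\peq^\triangledown_\lambda$; and $\peq_{E^\triangledown}\subseteq\peq^\triangledown_\lambda$ reduces to: any two points not related by $\peq^\triangledown_\lambda$ are separated by some such neighborhood. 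I will treat the non-uniform case ($E^\triangledown_\lambda$) in full; the uniform case ($\mathcal E^\triangledown_u$, $\mathcal E^\triangledown_{\flat u}$) is the verbatim analogue, since uniformity rewrites ``$w\mathrel B v$'' as ``$v\in B$'' and makes $E$ constant, collapsing the pointed families $E^v(w,x,f)$ and $E^n_\lambda(w,x,f)$ onto $\mathcal E^v$ and $\mathcal E^n_u$ (the extra proviso ``$g\equiv 1$ on $\mathop\uparrow v$'' in $\mathcal E^v$ is harmless because all germ conditions compare functions only on upsets $\mathop\uparrow w'\subseteq\mathop\uparrow v$).

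For $\peq^\triangledown_\lambda\subseteq\peq_{E^\triangledown}$ I go through the shapes of neighborhood. The set $W^\triangledown_\lambda$ is trivial. For $X^g(v)$ and for $X^g_\lambda(n)$, the three conjuncts of $(w,x,f)\peq^\triangledown_\lambda(v,y,g)$ match the three membership conditions one for one: $w\peq v$ together with Constraint~2 of Definition~\ref{ev-model} applied inside $\mathcal M$ (evidence sets of $\mathcal M$ are $\peq$-upward closed) gives $w\in X\Rightarrow v\in X$; $w\peq v$ also gives $\mathop\uparrow v\subseteq\mathop\uparrow w$, so with $f\upharpoonright\mathop\uparrow v=g\upharpoonright\mathop\uparrow v$ the germ condition ``$h\upharpoonright\mathop\uparrow w'$ agrees with the third coordinate'' carries over from the first point to the second; and the monotone second-coordinate clause ($x=y\in W$, or $x\le y\in\mathbb N$) is exactly what ``$x'=v$'' respectively ``$m\ge n$'' ask for. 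So membership is preserved and $(w,x,f)\peq_{E^\triangledown}(v,y,g)$ follows.

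For the converse, suppose $(w,x,f)\not\peq^\triangledown_\lambda(v,y,g)$, so one of the three conjuncts fails, and I build a separating neighborhood through $(w,x,f)$ in each case. If the second-coordinate clause fails, a neighborhood of the right shape taken from $X=W$ with germ $f$ contains $(w,x,f)$ but excludes $(v,y,g)$, since $y$ then has the wrong type or breaks the index inequality. If the germ clause fails, say $f(v')\neq g(v')$ with $v'\seq v$, the same neighborhood (germ $f$) still contains $(w,x,f)$ and excludes $(v,y,g)$, because $g\upharpoonright\mathop\uparrow v\neq f\upharpoonright\mathop\uparrow v$. If $w\not\peq v$, I instead pick a germ $h$ agreeing with $f$ on $\mathop\uparrow w$ and with $h(v)\neq g(v)$ --- possible precisely because $v\notin\mathop\uparrow w$ --- and take the germ-$h$ neighborhood of the shape dictated by the second coordinate of $(w,x,f)$ (an $X^h(x)$ if $x\in B[W]$, an $X^h_\lambda(x)$ if $x\in\mathbb N$, with $X=W$); it contains $(w,x,f)$ but not $(v,y,g)$. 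In each subcase one still has to certify that the chosen set genuinely belongs to some $E^\triangledown(s)$: for the $X^g(v)$-shaped sets this uses $W\in E(s)$ for all $s$ plus, when the relevant second coordinate lies in $B[W]$, the existence of some $s$ with $s\mathrel B x$; for the $X^g_\lambda(n)$-shaped sets it uses the ``$X$ not flat for $s$'' proviso, which furnishes these neighborhoods exactly in the non-flat logics, whereas in the flat logics $W^\triangledown_\lambda$ is small enough that every second coordinate already lies in $B[W]$ and only the $X^g(v)$ family is needed.

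The coordinate-matching of the second paragraph is routine; the work is in the membership bookkeeping of the third. One has to keep straight, for each point, whether its second coordinate is a $B$-point or a natural number --- equivalently which clause of the definition of $E^\triangledown_\lambda$ (and, in the uniform case, of $\mathcal E^\triangledown$ with its tighter germ restriction) can route a separating neighborhood through that point --- and to check that the ``not flat'' proviso makes the $\mathbb N$-indexed neighborhoods available in precisely the logics whose domains contain $\mathbb N$-indexed points. I expect this interaction between the type of the second coordinate and (non-)flatness to be the only real obstacle; the rest is a direct unwinding of the definitions.
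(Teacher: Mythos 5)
Your argument is essentially the paper's own proof: the inclusion $\peq^\triangledown_\lambda\subseteq\peq_{E^\triangledown}$ is checked shape-by-shape on the generating neighborhoods ($W^\triangledown_\lambda$, the $X^g(v)$ family, the $X^g_\lambda(n)$ family), and the converse is the same three-way case split on which conjunct of $\peq^\triangledown_\lambda$ fails, with an explicitly constructed separating neighborhood anchored at a suitable point --- and your anchoring via the second coordinate $x$ (an $X^h(x)$-set when $x\in B[W]$, an $\mathbb N$-indexed set when $x\in\mathbb N$) is if anything slightly more careful than the paper's choice of $W^h(u)$ for an arbitrary $B$-successor $u$ of $w$. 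Your one imprecision --- that the $\mathbb N$-indexed neighborhoods are available ``exactly in the non-flat logics'' (their availability really hinges on the model containing evidence not flat for some point, and in the concise case the second coordinates are the dummy $0\in\mathbb N$, served by $\mathcal E^0_{\flat u}$, which carries no flatness proviso) --- occurs precisely at the junctures the paper itself waves off as ``very similar,'' so it does not distinguish your proof from the paper's.
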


\begin{proof}
First let us check that $\mathord\peq^\triangledown_\lambda\subseteq \mathord\peq_{E^\triangledown}$. Suppose that $(w,x,f)\peq^\triangledown_\lambda (v,y,g)$ and $(w,x,f)\in X\in E^\triangledown(u,z,h)$. If $X=W^\triangledown$ there is nothing do do, but otherwise consider three cases:

\begin{enumerate}
\item If $X=Y^{h'}(u')$ with $u\mathrel B u'$, then we must have $x=y=u'$, $f$ agrees with $h'$ on $\mathop\uparrow w$ and hence $g$ does on $\mathop\uparrow v$. It follows that $(v,y,g)\in Y^{h'}({u'})$.

\item If $X=Y^{h}({k})$, as before the condition on $g$ is satisfied and since $x\leq y$ we also have  $(v,y,g)\in Y^h(k)$.
\end{enumerate}

For the other direction, if $(w,x,f)\not\peq^\triangledown_\lambda (v,y,g)$, consider now three cases.
\begin{enumerate}
\item If $w\not\peq v$, then define $h$ to be identical to $f$ except that $h(v)\not= g(v)$ and define $X$ as follows: if there is $u$ such that $w\mathrel Bu$, then $X=W^h(u)$; otherwise, $W$ is not flat for $w$ and we choose $X=W^h_\lambda(0)$. In either case $X\in E(w,x,f)$, $f$ agrees with $h$ on $\mathop\uparrow w$ so $(w,x,f)\in X\in E(w,x,h)$ yet $(v,y,g)\not\in X$.

\item If $x\not=y$ and at least one of them belongs to $W$ (for example, $x$), find $u$ such that $u\mathrel B x$. Then, $(w,x,f)\in W^f(x)\in E(u,x,f)$ but $(u,y,g)\not \in W^f(x)$. The cases where either $y\in W$ but $x\in \mathbb N$ or $x>y\in\mathbb N$ are very similar.

\item If $f\upharpoonright\mathop\uparrow v\not=g\upharpoonright\mathop\uparrow v$ and $x\in W$, let $x'$ be such that $x'\mathrel B x$ if $x\in W$. Setting $X= W^f(x')$ we see that $(w,x,f)\in X\in E(w,x,f)$ but $(v,y,g)\not\in W^f(x')$. If instead $x\in\mathbb N$, we can set $X=W^f(0)$.

\end{enumerate}
We conclude that $\peq^\triangledown_\lambda=\peq_{E^\triangledown}$.
\smallskip
The uniform cases are very similar (and simpler).
\end{proof}

We can now define our intended models:
\begin{definition}
Let $\mathcal M=\langle W,E,B,\peq,V\rangle$ be a $\lambda$-model.

Define $V^\triangledown_\lambda=W^\triangledown _\lambda\cap\pi^{-1}V$ and set
\[\mathcal M^\triangledown_\lambda=\langle W^\triangledown_\lambda, E^\triangledown_\lambda, V^\triangledown_\lambda\rangle.\]
\end{definition}

These models are convenient because scenarios are easy to identify. Below, let $E^z_\lambda(w,x,f)=E^z(w,x,f)$ if $\lambda$ is not uniform, $E^z_u(w,x,f)=\mathcal E^z$.

\begin{lemma}\label{scenariolemm}
Given a non-concise $\lambda$, a $\lambda$-model $\mathcal M=\<W,E,B,\peq,V\>$ and $\mathcal M^\triangledown_\lambda=\langle W^\triangledown_\lambda,E^\triangledown_\lambda, V^\triangledown_\lambda\rangle$:
\begin{enumerate}
\item for every set $\mathcal X$ of the form
\begin{equation}\label{form1}
\mathcal X=\{W^\triangledown_\lambda\}\cup E^v_\lambda(w,x,f),
\end{equation}
where $w\mathrel B v$, $\mathcal X$ is a $(w,x,f)$-scenario, and $\pi\left(\bigcap \mathcal X\right)=\mathop\uparrow v$;
\item if $\mathcal X$ is a $(w,x,f)$-scenario that is not of the above form, then
\begin{equation}\label{form2}
\mathcal X\subseteq \{W^\triangledown_\lambda\}\cup \bigcup_{n\in\mathbb N}E^n_\lambda(w,x,f)
\end{equation}
and $\bigcap \mathcal X=\varnothing$.
\end{enumerate}
\end{lemma}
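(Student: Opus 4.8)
The plan is to verify the two parts of Lemma~\ref{scenariolemm} by unwinding the definitions of the neighborhoods $E^v_\lambda(w,x,f)$ and $E^n_\lambda(w,x,f)$ introduced above, paying close attention to which families have the finite intersection property and to the shape of their intersections. Throughout, I fix a non-concise $\lambda$, a $\lambda$-model $\mo M$, and write elements of $W^\triangledown_\lambda$ as triples $(w',x',f')$.

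\medskip

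\noindent\textbf{Part 1.} First I would take $\mathcal X=\{W^\triangledown_\lambda\}\cup E^v_\lambda(w,x,f)$ with $w\mathrel B v$ and show it has the fip together with the claimed intersection. The key computation is: given finitely many sets $X_1^{g_1}(v),\dots,X_k^{g_k}(v)$ in $E^v_\lambda(w,x,f)$ (so each $g_i$ agrees with $f$ on $\mathop\uparrow v$ and $w\mathrel E X_i$), their intersection consists of those $(w',x',g')$ with $x'=v$, with $g'$ agreeing with each $g_i$ on $\mathop\uparrow w'$, and with $w'\in\pi[X_i^{g_i}(v)]$ for all $i$; using that $\bigcap_i\mathop\uparrow (\cdot)$ issues are handled because all $g_i$ agree on $\mathop\uparrow v$ and $v\peq w'$ would be forced, one sees a witness exists (e.g.\ take any $w'$ above $v$ and an appropriate $g'$). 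In fact I would argue that $\pi\bigl(\bigcap\mathcal X\bigr)=\mathop\uparrow v$: the inclusion $\subseteq$ follows since membership in $W^h(v)$-type sets (taking $X=W$, which satisfies $w\mathrel E W$ by the no-empty-evidence/whole-universe constraint) forces $x'=v$ and hence $v\peq w'$ via the plausibility structure; the inclusion $\supseteq$ follows by exhibiting, for each $w'\seq v$, a triple $(w',v,g')$ lying in every member of $\mathcal X$, choosing $g'$ to agree with $f$ on $\mathop\uparrow v$ (which is consistent since $\mathop\uparrow w'\subseteq\mathop\uparrow v$). Maximality of $\mathcal X$ among fip subfamilies of $E^\triangledown_\lambda(w,x,f)$ is then the crux: I would show that no set from $E^n_\lambda(w,x,f)$ and no set $Y^{g'}(v')$ with $v'\neq v$ can be consistently added, because such a set either constrains the second coordinate to lie in $\mathbb N$ (incompatible with $x'=v\in W$) or to equal $v'\neq v$; combined with the fact that every element of $E^v_\lambda(w,x,f)$ is already in $\mathcal X$, maximality follows.

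\medskip

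\noindent\textbf{Part 2.} For a $(w,x,f)$-scenario $\mathcal X$ not of the above form, I would first show $\mathcal X$ cannot contain any set $Y^{g'}(v')$: if it did, then by maximality of scenarios it would contain all of $E^{v'}_\lambda(w,x,f)$ (the argument being that any two such sets $Y_1^{g_1}(v'),Y_2^{g_2}(v')$ are compatible with $\mathcal X$ since all constrain the second coordinate identically to $v'$ and the third coordinate conditions are simultaneously satisfiable above a common upper bound — here directedness is not needed because we only take finite intersections and can always move high enough in the $\peq$-order, or, when $\peq$ is not directed, the relevant finite meets still land in the fixed set $Y_1\cap\dots$ by the plausibility-coherence constraint on evidence sets). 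That would force $w\mathrel B v'$ (needed for $E^{v'}_\lambda(w,x,f)$ to be nonempty as a candidate in $E^\triangledown_\lambda$) and put $\mathcal X$ in the form~\eqref{form1}, a contradiction. Hence $\mathcal X\subseteq\{W^\triangledown_\lambda\}\cup\bigcup_{n}E^n_\lambda(w,x,f)$, giving~\eqref{form2}. Finally, to see $\bigcap\mathcal X=\varnothing$: the point is that $\mathcal X$ must in fact contain $E^n_\lambda(w,x,f)$ for cofinally many (indeed all, by the same maximality argument) $n\in\mathbb N$, since whenever $w\mathrel E X$ with $X$ not flat for $w$, all the sets $X^g_\lambda(n)$ are mutually fip-compatible; but a triple $(w',m,g')$ in $X^g_\lambda(n)$ requires $m\geq n$, so lying in $X^g_\lambda(n)$ for all $n$ is impossible. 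I should make sure there is at least one non-flat evidence set at $w$, which is exactly the hypothesis that rules out $\mathcal X$ being trivially $\{W^\triangledown_\lambda\}$ together with only $E^v$-sets — i.e.\ this is where "not of the above form" does its work, possibly combined with the fact that if every evidence set at $w$ were flat then $B[w]\neq\varnothing$ and the only scenarios would be of the form~\eqref{form1}.

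\medskip

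\noindent I expect the main obstacle to be the maximality arguments: carefully arguing that a given fip family is actually \emph{maximal}, and conversely that any scenario is forced to contain an entire block $E^v_\lambda(w,x,f)$ or the entire tail $\bigcup_n E^n_\lambda(w,x,f)$, requires checking that the $f$-coordinate conditions (agreement on various upsets $\mathop\uparrow w'$) never obstruct finite intersections. The uniform-free and non-concise hypotheses keep the bookkeeping manageable, and the plausibility-coherence constraint (Constraint~2 of Definition~\ref{ev-model}, or rather its analogue built into these neighborhoods via $\peq^\triangledown_\lambda$, cf.\ Lemma~\ref{peqlemm}) is what guarantees finite intersections of same-index sets stay inside a single prescribed set rather than shrinking uncontrollably.
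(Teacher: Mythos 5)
Your overall strategy matches the paper's: classify members of a scenario by their second coordinate, show that $\{W^\triangledown_\lambda\}\cup E^v_\lambda(w,x,f)$ is a scenario whose intersection projects onto $\mathop\uparrow v$, and show that any other scenario sits inside the numeric part and has empty intersection. Part 1 is essentially correct, though the inclusion $\pi\left(\bigcap\mathcal X\right)\subseteq\mathop\uparrow v$ does not follow from ``the plausibility structure'' alone: for $v\not\peq u$ and a candidate $(u,u',g)\in\pi^{-1}(u)$ you must use the function coordinate, flipping $f$ at $u$ to obtain $h$ with $h(u)\neq g(u)$ that still agrees with $f$ on $\mathop\uparrow v$, so that $W^h(v)\in E^v_\lambda(w,x,f)$ excludes $(u,u',g)$. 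That flipping argument is the actual mechanism, and your sketch leaves it implicit.

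The genuine gap is in Part 2. The claim that maximality forces $\mathcal X$ to contain the whole blocks $E^n_\lambda(w,x,f)$ for all (or cofinally many) $n$ is false in general: unlike $E^v_\lambda(w,x,f)$, the definition of $E^n_\lambda(w,x,f)$ imposes no compatibility condition on the superscript function, so it contains $X^{g_1}_\lambda(n)$ and $X^{g_2}_\lambda(n)$ for arbitrary $g_1,g_2$; if $g_1,g_2$ disagree at a world $t$ with $w'\peq t$ for all $w'$ (e.g.\ a $\peq$-greatest world), these two sets are disjoint, so the block itself fails the fip and no scenario can contain it. Your justification (``all the sets $X^g_\lambda(n)$ are mutually fip-compatible'') is only true for fixed $g$, and even then mutual compatibility of the added sets does not give compatibility with the rest of $\mathcal X$, which is what the maximality argument needs. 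The repair is pointwise, as in the paper: suppose $(v,n,g)\in\bigcap\mathcal X$. First, $\mathcal X$ must contain at least one set $X^h_\lambda(m)$ --- otherwise, having no $E^{v'}$-sets, $\mathcal X=\{W^\triangledown_\lambda\}$, which is not maximal, since either $w\mathrel B v'$ for some $v'$ and one may add $W^f(v')$, or no such $v'$ exists, $W\in E(w)$ is not flat for $w$ (this alternative cannot arise for flat $\lambda$, where $\mathcal M$ is a flat model), and one may add $W^f_\lambda(0)$; this is where the hypotheses you left hedged (``possibly combined with\dots'') actually do their work. Now every member of $\mathcal X$ is $W^\triangledown_\lambda$ or of the form $Y^{h'}_\lambda(k)$, and all such sets are closed under raising the numeric coordinate, so $(v,n+1,g)\in\bigcap\mathcal X$ and also $(v,n+1,g)\in X^h_\lambda(n+1)$, the function condition being inherited from $(v,n,g)\in X^h_\lambda(m)$. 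Hence $\mathcal X\cup\{X^h_\lambda(n+1)\}$ has the fip while $X^h_\lambda(n+1)\notin\mathcal X$ (it omits $(v,n,g)$), contradicting maximality; this gives $\bigcap\mathcal X=\varnothing$ with no claim about whole blocks.
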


\begin{proof} We shall only consider the non-uniform case; the uniform one is analogous.

If $(v,y,g)\in X\in E^z(w,x,f)$, then either $y=z\in W$ or $y,z\in\mathbb N$. It follows that if $X,Y\in \mathcal X$ are such that $X\in E^z(w,x,f)$ and $Y\in E^{z'}(w,x,f)$, either $z=z'\in W$, $z,z'\in \mathbb N$, or $X\cap Y=\varnothing$, where the latter is impossible if $\mathcal X$ is a scenario.

Hence every scenario is either contained in a collection of the form \eqref{form1} or is of the form \eqref{form2}. Let us first check that collections of the form \eqref{form1} are indeed scenarios and have the appropriate intersection.

So assume that $\mathcal X$ is of form \eqref{form1} for some $v,f$ with $w\mathrel B v$. First note that if $v\peq u$ then $(u,v,f)\in\bigcap\mathcal X$. For clearly, $(u,v,f)\in W^\triangledown_\lambda$ and every set in $E^v(w,x,f)$ is of the form $X^v_g$ for some $g$ which agrees with $f$ on $\mathop\uparrow v$, thus on $\mathop\uparrow u$, which means that $(u,v,f)\in X^f(v)$. It follows that $\mathop\uparrow v\subseteq\pi\left(\bigcap\mathcal X\right)$.

Moreover, if $v\not\peq u$, then any element of $\pi^{-1}(u)$ is of the form $(u,u',g)$. Define a function $h$ which is identical to $f$ except that $f(u)\not= g(u)$. Then, by definition $(u,u',g)\not\in W^h(v)$, yet $w\mathrel E W$ and $h$ agrees with $g$ on $\mathop\uparrow v$ so that $W^h(v)\in E^v(w,x,f)$ and $(u,u',g)\not\in \bigcap \mathcal X$. Since $u',g$ were arbitrary, we conclude that $u\not\in \pi\left(\bigcap\mathcal X\right)$. Thus, $\mathop\uparrow v\supseteq\pi\left(\bigcap\mathcal X\right)$, and our claim follows.

Now let us check that if $\mathcal X$ is of form \eqref{form2}, then it has an empty intersection. We begin with three simple observations:
\begin{enumerate}

\item $\mathcal X$ contains at least one element of the form $X^f(n)$ with $n\in\mathbb N$. For indeed, as discussed above, if $\mathcal X$ is not of form \eqref{form1} then it does not intersect any $E^v(w,x,f)$ with $v\in W$, and the collection $\{W^\triangledown_\lambda\}$ by itself cannot be a $(w,x,f)$-scenario, for either $w\mathrel B v$ for some $v$, in which case $\{W^\triangledown_\lambda,W^f(v)\}$ would still have a non-empty intersection (it includes $(v,v,f)$), or there is no such $v$ so that $W$ is not flat for $x$, and $\{W^\triangledown_\lambda,W^f_\lambda(0)\}$ would be a proper extension with a non-finite intersection (it includes $(w,0,f)$).

\item  Any element of $\bigcap \mathcal X$ must be of the form $(v,n,g)$ with $n\in\mathbb N$ (for $X^f(n)$ contains only elements of this form).

\item If $n<k$ and $(v,n,g)\in X\in\mathcal X$ it easily follows that $(v,k,g)\in X$ by case-by-case inspection.
\end{enumerate}

With this, towards a contradiction we assume that $(v,n,g)\in \bigcap\mathcal X$ and pick $X^f(m)\in\mathcal X$. Thus, $(v,n+1,g)\in X^f({n+1})\cap\bigcap\mathcal X$, from which it follows that $(v,n,g)\not\in X^f({n+1})\cap\bigcap\mathcal X\not=\varnothing$ and thus $X^f({n+1})\not\in\mathcal X$; but this contradicts the maximality of $\mathcal X$, and we conclude that there may be no such point in the intersection and hence it must be empty.
\end{proof}

An immediate corollary of the above is the following:

\begin{corollary}
If $\mathcal M$ is flat, then so is $\mathcal M^\triangledown_\flat$
\end{corollary}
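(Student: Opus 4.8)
The plan is to show that every scenario in $\mathcal M^\triangledown_\flat$ has non-empty intersection, and for this I will lean directly on the structural description provided by Lemma \ref{scenariolemm}. The key point is that when $\lambda = \flat$, the domain $W^\triangledown_\flat$ consists only of triples $(w,x,f)$ with $x\in B[W]$ — there are no triples with a natural-number parameter. Consequently the ``bad'' parts of the evidence relation, namely the sets $E^n_\flat(w,x,f)$, are empty by construction (this is remarked explicitly right after their definition: $E^n_\flat(w,x,f)=\varnothing$). So the only scenarios available are those falling under case (1) of Lemma \ref{scenariolemm}, i.e. of the form $\mathcal X = \{W^\triangledown_\flat\}\cup E^v_\flat(w,x,f)$ with $w\mathrel B v$; case (2), which would force an empty intersection, requires a scenario containing some $X^f(n)$ with $n\in\mathbb N$, which cannot occur here.

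First I would fix an arbitrary scenario $\mathcal X$ in $\mathcal M^\triangledown_\flat$, say a $(w,x,f)$-scenario. Since $\mathcal M$ is a flat (hence in particular an $EB\!\!\peq$-) model, by definition of flatness every $w\in W$ has some $v$ with $w\mathrel B v$, so $B[W]\neq\varnothing$ and the domain is well-behaved; moreover from the $(w,x,f)$-scenario, using the observations in the proof of Lemma \ref{scenariolemm}, $\mathcal X$ cannot be of form \eqref{form2}: such a scenario would have to contain a set $X^f(n)$ with $n\in\mathbb N$, but every set in $E^\triangledown_\flat(w,x,f)$ other than $W^\triangledown_\flat$ itself lies in some $E^v_\flat(w,x,f)$ with $w\mathrel B v$, as the $E^n_\flat$ components are empty. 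Hence $\mathcal X$ is of form \eqref{form1}, and Lemma \ref{scenariolemm}(1) tells us $\pi\big(\bigcap\mathcal X\big)=\mathop\uparrow v$, which is non-empty (it contains $v$). Therefore $\bigcap\mathcal X\neq\varnothing$.

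Since $\mathcal X$ was an arbitrary scenario and all of them have non-empty intersection, $\mathcal M^\triangledown_\flat$ is flat by definition. (One should also note that $\mathcal M^\triangledown_\flat$ is genuinely an evidence model — $\varnothing\notin E^\triangledown_\flat(w,x,f)$ because each generating set $X^g(v)$ contains $(v',v,g)$ for $v\peq v'$, and $W^\triangledown_\flat$ is always included — and that it is an intended-model-style structure $\langle W^\triangledown_\flat, E^\triangledown_\flat, V^\triangledown_\flat\rangle$, so the notion of flatness applies to it.) I do not anticipate a genuine obstacle here: the entire force of the corollary has already been packed into Lemma \ref{scenariolemm}, and the only thing to verify is that the flat restriction of the domain rules out case (2), which is immediate from $E^n_\flat(w,x,f)=\varnothing$. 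The one point requiring a line of care is confirming that the singleton $\{W^\triangledown_\flat\}$ is never itself a maximal fip-collection, so that every scenario really does pick up a genuine $E^v_\flat$ component — but this too is covered by observation (1) inside the proof of Lemma \ref{scenariolemm} (using $w\mathrel B v$, which holds by flatness of $\mathcal M$).
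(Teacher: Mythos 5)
Your proof is correct and follows essentially the same route as the paper: both deduce from Lemma \ref{scenariolemm} that, since $E^n_\flat(w,x,f)=\varnothing$, no scenario of form \eqref{form2} can occur in $\mathcal M^\triangledown_\flat$, so every scenario is of form \eqref{form1} and has non-empty intersection. Your extra care in checking that $\{W^\triangledown_\flat\}$ alone is never maximal (via flatness of $\mathcal M$ and $W\in E(w)$, so every $w$ has a $B$-successor) just makes explicit a step the paper's one-line proof leaves implicit.
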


\begin{proof}
By definition, $\mathcal M^\triangledown_\flat$ cannot contain any scenarios of form \eqref{form2}, hence every scenario must be of form \eqref{form1} and thus have a non-empty intersection.
\end{proof}

We may now collect our results into the following:

\begin{theorem}\label{reptheo}
Given a $\lambda$-model $\mathcal M$, there exists an evidence model $\mathcal M^\triangledown_\lambda=\langle W^\triangledown_\lambda, E^\triangledown_\lambda, V^\triangledown_\lambda\rangle$ such that $\mathcal M$ is a p-morphic image of $(\mathcal M_\lambda^\triangledown)^\vartriangle$.

Further, if $\mathcal M$ is (i) flat and finite, (ii) flat, (iii) uniform or (iv) finite and concise, respectively, then so is $\mathcal M^\triangledown$.
\end{theorem}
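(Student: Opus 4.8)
The plan is to verify the two assertions of Theorem \ref{reptheo} separately, since most of the hard work has already been carried out in the preceding lemmas. For the first assertion, I would argue that $\pi:W^\triangledown_\lambda\to W$ is a surjective $p$-morphism from $(\mathcal M^\triangledown_\lambda)^\vartriangle$ onto $\mathcal M$, and then invoke Theorem \ref{bistheo} implicitly for the stated representation. Surjectivity of $\pi$ is immediate from its definition. The $\mathsf{atoms}$ condition holds by the definition of $V^\triangledown_\lambda=W^\triangledown_\lambda\cap\pi^{-1}V$. The $\peq$-conditions $\mathsf{forth}_\peq$ and $\mathsf{back}_\peq$ were already established in the lemma stating that $\pi$ is a $p$-morphism with respect to the signature $\peq$, together with Lemma \ref{peqlemm} which identifies $\peq^\triangledown_\lambda$ with the derived order $\peq_{E^\triangledown}$ that $(\mathcal M^\triangledown_\lambda)^\vartriangle$ actually carries. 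The $E$-conditions $\mathsf{forth}_E$ and $\mathsf{back}_E$ follow from the definition of $E^\triangledown_\lambda$: every generating family $X^g(v)$ or $X^g_\lambda(n)$ projects under $\pi$ into the set $X$ with $w\mathrel E X$, and conversely each $X$ with $w\mathrel E X$ gives rise to such a neighborhood whose $\pi$-image is contained in $X$; the one auxiliary set $W^\triangledown_\lambda$ corresponds to $W\in E(w)$. The last and only delicate part is $\mathsf{forth}_R$ and $\mathsf{back}_R$ for the belief relation $B_{E^\triangledown}$ of $(\mathcal M^\triangledown_\lambda)^\vartriangle$, and here Lemma \ref{scenariolemm} does the work: in the non-concise case, the scenarios of $\mathcal M^\triangledown_\lambda$ with nonempty intersection are exactly those of form \eqref{form1} with $w\mathrel B v$, and $\pi(\bigcap\mathcal X)=\mathop\uparrow v$, so $(w,x,f)\mathrel B_{E^\triangledown}(v',y,g)$ holds precisely when $v\peq v'$ for some $v$ with $w\mathrel B v$; since $\mathcal M$ is a $\lambda$-model it satisfies constraint 3 (closure of $B$ under $\peq$ on the right), which translates exactly into $\mathsf{forth}_R$ and $\mathsf{back}_R$. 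The concise case $\lambda=\flat u$ must be treated by the parallel (and simpler) computation with $\mathcal E^0_{\flat u}$, where every maximal point lies in $B_{\mathcal E^\triangledown}$ and one appeals to Lemma \ref{mfipmax} and the definition of conciseness; this is the step I expect to require the most care, since Lemma \ref{scenariolemm} is explicitly stated only for non-concise $\lambda$.

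For the second assertion I would go through the four cases in turn. If $\mathcal M$ is flat and finite, then $\lambda$ is flat, so by the lemma on the size of $W^\triangledown_\lambda$ the model $\mathcal M^\triangledown$ is finite, and by the corollary following Lemma \ref{scenariolemm} it is flat (all scenarios are of form \eqref{form1}); since every finite intended model is flat anyway, the claim is in fact doubly secured. If $\mathcal M$ is flat (but possibly infinite), flatness of $\mathcal M^\triangledown_\flat$ is again the corollary to Lemma \ref{scenariolemm}. If $\mathcal M$ is uniform, I would check directly from the definitions of $\mathcal E^\triangledown_u$ and $\mathcal E^v,\mathcal E^n_u$ that $E^\triangledown$ is constant across $W^\triangledown_u$ — which is manifest since these families do not mention the base point — that $B_{\mathcal E^\triangledown}$ is constant and that every point of $B_{\mathcal E^\triangledown}$ is $\peq^\triangledown$-maximal, using Lemma \ref{mfipmax} applied to the uniform intended model $(\mathcal M^\triangledown_u)^\vartriangle$ together with Lemma \ref{peqlemm}. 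Finally, if $\mathcal M$ is finite and concise, it is in particular flat and finite so $\mathcal M^\triangledown_{\flat u}$ is finite and flat; uniformity follows from the shape of $\mathcal E^\triangledown_{\flat u}=\{W^\triangledown_{\flat u}\}\cup\mathcal E^0_{\flat u}$; and conciseness then follows because a finite flat uniform intended model is concise — here I would verify that every $\peq^\triangledown$-maximal point lies in $B_{\mathcal E^\triangledown}$ via the flat-uniform direction of Lemma \ref{mfipmax}.

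**Main obstacle.** The routine parts are the $E$- and $\peq$-clauses of the $p$-morphism, which amount to unwinding the definitions of $X^f(v)$, $X^f_\lambda(n)$ and the various $E^v$, $E^n_\lambda$. The real content — and the step I would spend the most effort on — is the belief clause: showing $B_{E^\triangledown}$ on $(\mathcal M^\triangledown_\lambda)^\vartriangle$ satisfies $\mathsf{forth}_R$ and $\mathsf{back}_R$ with respect to $B$ on $\mathcal M$. For $\mathsf{back}_R$ one is given $\pi(w,x,f)\mathrel B u$, i.e. $w\mathrel B u'$ for some $u'\peq u$ in $\mathcal M$ with $u\in\mathop\uparrow u'$; one must produce a preimage $(u,y,g)$ of $u$ with $(w,x,f)\mathrel B_{E^\triangledown}(u,y,g)$, and by Lemma \ref{scenariolemm}(1) it suffices to choose the scenario $\mathcal X$ of form \eqref{form1} built from $u'$ and an appropriate $g$ extending the constraints, noting $\pi(\bigcap\mathcal X)=\mathop\uparrow u'\ni u$. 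For $\mathsf{forth}_R$, given $(w,x,f)\mathrel B_{E^\triangledown}(v',y,g)$ one reads off from Lemma \ref{scenariolemm} that $(v',y,g)\in\bigcap\mathcal X$ for a scenario of form \eqref{form1} attached to some $v$ with $w\mathrel B v$ and $v\peq v'$; then $w\mathrel B v\peq v'=\pi(v',y,g)$ and constraint 3 on $\mathcal M$ gives $w\mathrel B\pi(v',y,g)$, as required. Packaging the concise case without a direct analogue of Lemma \ref{scenariolemm} — by instead computing scenarios of $\mathcal M^\triangledown_{\flat u}$ from scratch, which is short because $\mathcal E^\triangledown_{\flat u}$ has such a simple form — is the remaining loose end I would tie off.
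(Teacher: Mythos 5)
Your proposal is correct and follows essentially the same route as the paper: the paper's own proof simply collects the previously established facts (the $\peq$-clause lemma, Lemma \ref{peqlemm}, Lemma \ref{scenariolemm} and its corollary, and the size lemma) and notes that the remaining $B$-clauses follow from Lemma \ref{scenariolemm}, with the concise case handled via Lemma \ref{mfipmax} since finiteness gives flatness and hence $B_{\mathcal E^\triangledown_{\flat u}}$ is exactly the set of maximal worlds — which is precisely how you treat it. Your write-up just spells out in more detail what the paper leaves as an immediate consequence.
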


\begin{proof}
We have explicity stated and proven most required properties. One exception is the $B$-clauses of a p-morphism, but this follows immediately from Lemma \ref{scenariolemm}, and in the concise case from Lemma \ref{mfipmax}, using the fact that finite models are flat, so that $B_{\mathcal E^\triangledown_{\flat u}}$ is precisely the set of maximal worlds.
\end{proof}

\section{Syntactic properties of evidence logics}\label{syntactic-properties}

We now turn our attention to establishing our main completeness results. In order to do this, we shall collect in this section the preliminary syntactic results we shall need. Recall that if $\Theta$ is a (possibly infinite) set of formulas, $\Theta\vdash_\lambda\phi$ means that there is finite $\Theta'\subseteq \Theta$ such that $\vdash_\lambda\bigwedge\Theta'\to\phi.$ In this section and the next we will work exclusively over the full signature $\sigma=EBA\mathord\peq$.

\begin{lemma}\label{verybasic} Let $\Gamma$ be a set of formulas and $\lambda$ an evidence logic over $\sigma=EBA\mathord\peq$.
\begin{enumerate}
\item If $\varphi\wedge\nl\varphi, \nA\Gamma\vdash _{\lambda}\psi$, then
\[\nc\varphi,\nA\Gamma\vdash_{\lambda} \nc\psi.\]
\item If $\Phi,\nA\Gamma\vdash_{\lambda} \psi$, then \[\nB\Phi,\nA\Gamma\vdash_{\lambda} \nB\psi.\]
\item If $\Phi,[\peq]\Delta,\nA\Gamma\vdash_{\lambda} \psi$, then \[[\peq]\Phi,[\peq]\Delta,\nA\Gamma\vdash_{\lambda} [\peq]\psi.\]
\end{enumerate}
\end{lemma}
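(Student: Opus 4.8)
The statement is a standard ``regularity'' or ``monotonicity under modalities'' lemma of the kind familiar from completeness proofs in modal logic, so I would prove all three clauses by the same template, treating each modality according to its own rule. In each case the hypothesis is a derivability $\Sigma,\nA\Gamma\vdash_\lambda\psi$ for an appropriate antecedent $\Sigma$, and by definition of $\vdash_\lambda$ there is a finite $\Sigma'\subseteq\Sigma$ and a finite $\Gamma'\subseteq\Gamma$ with $\vdash_\lambda\bigwedge\Sigma'\wedge\bigwedge\nA\Gamma'\to\psi$. The key move, to be made uniformly, is to ``push'' the universal conjuncts through the modality in question: since $\nA$ is an $\mathsf{S5}$ modality, $\bigwedge\nA\Gamma'$ is (provably) equivalent to $\nA\bigwedge\Gamma'$, and $\nA$ distributes into any modal context via the \textbf{universality} axiom $\nA\varphi\to[X]\varphi$ together with the \textbf{pullout} axiom for the evidence case.

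For clause (2), which is the cleanest, I would argue as follows. From $\vdash_\lambda\bigwedge\Phi'\wedge\nA\bigwedge\Gamma'\to\psi$ apply the derived rule of $\mathsf{K}_B$-necessitation and normality to get $\vdash_\lambda\nB\big(\bigwedge\Phi'\wedge\nA\bigwedge\Gamma'\big)\to\nB\psi$, hence $\vdash_\lambda\bigwedge\nB\Phi'\wedge\nB\nA\bigwedge\Gamma'\to\nB\psi$. Now use \textbf{universality} ($\nA\chi\to\nB\chi$) and the $\mathsf{S5}$ axiom $\nA\chi\to\nA\nA\chi$ (or directly $\nA\chi\to\nB\nA\chi$ via universality applied to $\nA\chi$) to replace $\nB\nA\bigwedge\Gamma'$ by $\nA\bigwedge\Gamma'$, obtaining $\vdash_\lambda\bigwedge\nB\Phi'\wedge\nA\bigwedge\Gamma'\to\nB\psi$, which is the desired $\nB\Phi,\nA\Gamma\vdash_\lambda\nB\psi$. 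Clause (3) is identical in shape, using $\mathsf{S4}_\peq$ for $\nl$ in place of $\mathsf{K}_B$, with $[\peq]\Delta$ carried along passively: one necessitates $\bigwedge\Phi'\wedge\bigwedge[\peq]\Delta'\wedge\nA\bigwedge\Gamma'\to\psi$ under $[\peq]$, then uses the $\mathsf{S4}$ transitivity axiom $[\peq]\delta\to[\peq][\peq]\delta$ to collapse the iterated $[\peq]$ on the $\Delta$-conjuncts, and \textbf{universality} plus $\nA\chi\to\nA\nA\chi$ to collapse $[\peq]\nA\bigwedge\Gamma'$ to $\nA\bigwedge\Gamma'$.

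Clause (1) is the one requiring a little extra care because $\nc$ is non-normal, so I expect this to be the main obstacle — but it is exactly what the \textbf{E-monotonicity} rule, \textbf{plausible evidence}, and \textbf{pullout} are designed for. From the hypothesis $\varphi\wedge\nl\varphi,\nA\Gamma\vdash_\lambda\psi$ we get $\vdash_\lambda\varphi\wedge\nl\varphi\wedge\nA\bigwedge\Gamma'\to\psi$. Apply \textbf{E-monotonicity} to obtain $\vdash_\lambda\nc\big(\varphi\wedge\nl\varphi\wedge\nA\bigwedge\Gamma'\big)\to\nc\psi$. Now I must show $\nc\varphi\wedge\nA\Gamma\vdash_\lambda\nc(\varphi\wedge\nl\varphi\wedge\nA\bigwedge\Gamma')$. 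First, \textbf{plausible evidence} gives $\nc\varphi\to\nc(\varphi\wedge\nl\varphi)$. Then \textbf{pullout}, $\nc\chi\wedge\nA\xi\leftrightarrow\nc(\chi\wedge\nA\xi)$, applied with $\chi=\varphi\wedge\nl\varphi$ and $\xi=\bigwedge\Gamma'$, yields $\nc(\varphi\wedge\nl\varphi)\wedge\nA\bigwedge\Gamma'\leftrightarrow\nc(\varphi\wedge\nl\varphi\wedge\nA\bigwedge\Gamma')$. Chaining these, $\nc\varphi\wedge\nA\bigwedge\Gamma'\vdash_\lambda\nc(\varphi\wedge\nl\varphi\wedge\nA\bigwedge\Gamma')\vdash_\lambda\nc\psi$, which is precisely clause (1). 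Throughout, the only slightly delicate bookkeeping is the equivalence $\bigwedge\nA\Gamma'\leftrightarrow\nA\bigwedge\Gamma'$ and the passage from an infinite $\Gamma$ to a finite $\Gamma'$, both of which are immediate from the definition of $\vdash_\lambda$ and the $\mathsf{S5}$ axioms for $A$; I would state these once at the start and then apply them silently in all three clauses.
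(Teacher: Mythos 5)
Your proposal is correct and follows essentially the same route as the paper: reduce to a finite $\Gamma'$ and a single formula $\nA\gamma$ via the $\mathsf{S5}$ equivalence, then use $E$-monotonicity, \textbf{pullout} and \textbf{plausible evidence} for clause 1, normality of $\nB$ together with $\nA\chi\to\nA\nA\chi$ and universality for clause 2, and the same pattern with $\mathsf{S4}$ transitivity for clause 3. The only cosmetic difference is the order in which pullout and plausible evidence are chained in clause 1, which does not affect the argument.
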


\begin{proof} Without loss of generality we can assume $\Gamma,\Phi$ to be finite, since in general we have that $\Theta\vdash_\lambda\alpha$ if and only if for some finite $\Theta'\subseteq \Theta$, $\Theta'\vdash_\lambda\alpha$. Note further that 
\[\bigwedge \nA\Gamma\leftrightarrow \nA \bigwedge \Gamma\]
is derivable in $\mathsf{S5}$, so we can replace $\nA\Gamma$ by a single formula $\nA\gamma$.

\paragraph{1}
If $\varphi\wedge\nl\varphi,\nA\gamma\vdash_\lambda \psi$, then
\[\vdash_\lambda (\varphi\wedge\nl\varphi)\wedge \nA\gamma\to\psi.\]

By the monotonicity rule
\[\vdash_\lambda\nc\left((\varphi\wedge\nl\varphi)\wedge \nA\gamma\right)\to\nc\psi.\]

Applying the {\bf pullout} axiom we get
\[\vdash_\lambda\nc(\varphi\wedge\nl\varphi)\wedge \nA\gamma\to\nc\psi,\]
that is,
\[\nc(\varphi\wedge\nl\varphi),\nA\gamma\vdash_\lambda\nc\psi.\]
Finally, using {\bf plausible evidence} we see that
\[\nc\varphi,\nA\gamma\vdash_\lambda\nc\psi.\]

\paragraph{2} If $\Phi,\nA\gamma\vdash_\lambda \psi$, since $\nB$ is a normal operator, we also have
\[\nB\Phi,\nB \nA\gamma\vdash_\lambda \nB\psi.\]

But since $\nA\alpha\to\nA\nA\alpha$ and $\nA\nA\alpha\to \nB\nA\alpha$ are axioms, we also get
\[\nB\Phi,\nA\gamma\vdash_\lambda \nB\psi.\]

\paragraph{3} As before we may assume that $\Delta$ is finite. Also, $[\peq]$ is a normal operator and we obtain 
\[[\peq]\Phi,[\peq][\peq]\Delta,[\peq] \nA\gamma\vdash_\lambda [X]\psi\]
and, from this,
\[[\peq]\Phi,[\peq][\peq]\Delta, \nA\gamma\vdash_\lambda [X]\psi.\]

Finally, we use the transitivity of $[\peq]$ to obtain 
\[[\peq]\Phi,[\peq]\Delta, \nA\gamma\vdash_\lambda [X]\psi,\]
as desired.
\end{proof}

It will be very useful to draw some dual conclusions from the above result. Given a set of formulas $\Phi$, define $\Phi^\nA$ as $\cbra \psi \ | \ \nA\psi\in \Phi\cket$, and similarly for the other modalities. For a set of formulas $\Gamma$ and a signature $\sigma$ containing $A$, if $\lambda\in\{\logic^\sigma_{},\logic^\sigma_{\flat}\}$, define $\Gamma^\lambda=\nA\Gamma^\nA\cup \pA\Gamma^{\pA};$ if $\lambda\in\{\logic^\sigma_{u},\logic^\sigma_{\flat u}\}$, define
\[\Gamma^\lambda=\bigcup \big\{\bigcirc \Gamma^\bigcirc:\bigcirc=[X],\langle X\rangle\text{ with }X\in\{E,B,A\}\big\}.\]

\begin{lemma}\label{cons} Let $\Gamma$ be a set of formulas and $\lambda$ an evidence logic over $\sigma=EBA\mathord\peq$.

\begin{enumerate}
\item If $\Gamma$ is $\lambda$-consistent, $\alpha\in\Gamma^\nc$ and $\delta\in \Gamma^\ps$, then $\cbra\alpha\wedge \nl\alpha,\delta\cket\cup \Gamma^\lambda$ is $\lambda$-consistent.\label{cons1}

\item If $\Gamma$ is $\lambda$-consistent and $\psi\in\Gamma^{\pB}$, then $\cbra\psi\cket\cup\Gamma^{\nB}\cup\Gamma^\lambda$ is $\lambda$-consistent.

\item If $\Gamma$ is $\lambda$-consistent and $\psi\in\Gamma^{\langle \peq\rangle}$, then $\cbra\psi\cket\cup[\peq]\Gamma^{[\peq]}\cup\Gamma^\lambda$ is $\lambda$-consistent.

\item If $\lambda\in\{\logic^\sigma_{\flat },\logic^\sigma_{\flat u}\}$, $\Gamma$ is $\lambda$-consistent and $\psi\in\Gamma^\nc$, then \[\cbra\psi\wedge\nl\psi\cket\cup\Gamma^\nB\cup \Gamma^\lambda\]
is $\lambda$-consistent.
\end{enumerate}
\end{lemma}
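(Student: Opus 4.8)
The plan is to derive each clause as the contrapositive dual of the corresponding clause of Lemma \ref{verybasic}, exactly as one does in the standard canonical-model arguments for normal modal logics, with the twist that the non-normal ``evidence for'' modality forces us to route through the {\bf pullout} and {\bf plausible evidence} axioms already packaged inside Lemma \ref{verybasic}(1). Concretely, to prove clause \ref{cons1}, suppose for contradiction that $\{\alpha\wedge\nl\alpha,\delta\}\cup\Gamma^\lambda$ is $\lambda$-inconsistent. Since $\delta$ and all formulas in $\Gamma^\lambda$ are (when $\lambda$ is non-uniform) of the form $\nA\chi$ or $\pA\chi$, and since $\pA\chi$ is equivalent to $\neg\nA\neg\chi$, a little $\mathsf{S5}_A$-massaging lets me absorb $\delta$ together with the relevant $\pA$-formulas: I will show that $\varphi\wedge\nl\varphi,\nA\Gamma'\vdash_\lambda \neg\delta'$ for suitable $\nA\Gamma'\subseteq\Gamma^\lambda$ built from $\Gamma^\nA$, where $\delta=\pA\delta'$. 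Feeding this into Lemma \ref{verybasic}(1) yields $\nc\varphi,\nA\Gamma'\vdash_\lambda\nc\neg\delta'$, i.e.\ $\nc\varphi,\nA\Gamma'\vdash_\lambda\neg\ps\delta'$; but $\nc\varphi\in\Gamma$ (as $\alpha\in\Gamma^\nc$ means $\nc\alpha\in\Gamma$), $\nA\Gamma'\subseteq\Gamma$, and $\ps\delta'\in\Gamma$ (as $\delta\in\Gamma^\ps$), contradicting the $\lambda$-consistency of $\Gamma$. Clauses 2 and 3 are the same move applied to Lemma \ref{verybasic}(2) and (3): assume the target set inconsistent, push the inconsistency through the appropriate part of Lemma \ref{verybasic} to get $\nB\psi$ (resp.\ $[\peq]\psi$) derivable from formulas in $\Gamma$, and contradict the presence of $\pB\psi$ (resp.\ $\langle\peq\rangle\psi$) in $\Gamma$.

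For clause 4, which is only claimed for the flat logics $\logic^\sigma_\flat$ and $\logic^\sigma_{\flat u}$, the extra ingredient is the {\bf flatness} axiom $\nc\varphi\to\pB\varphi$. Here I start from $\psi\in\Gamma^\nc$, so $\nc\psi\in\Gamma$, and by {\bf flatness} also $\pB\psi\in\Gamma$; moreover by {\bf plausible evidence} $\nc(\psi\wedge\nl\psi)\in\Gamma$ and hence $\pB(\psi\wedge\nl\psi)\in\Gamma$ — actually it is cleaner to note that from $\nc\psi\in\Gamma$ and {\bf plausible evidence} plus {\bf flatness} we get $\pB(\psi\wedge\nl\psi)\in\Gamma$ directly. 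Now suppose $\{\psi\wedge\nl\psi\}\cup\Gamma^\nB\cup\Gamma^\lambda$ is $\lambda$-inconsistent; then $\psi\wedge\nl\psi,\Gamma^\nB\vdash_\lambda\bot$ modulo absorbing $\Gamma^\lambda$ into an $\nA$-formula as before, so by Lemma \ref{verybasic}(2) we obtain $\nB(\psi\wedge\nl\psi),\nA\Gamma'\vdash_\lambda\nB\bot$, i.e.\ $\vdash_\lambda$ (from things in $\Gamma$) $\neg\pB(\psi\wedge\nl\psi)$ — contradicting $\pB(\psi\wedge\nl\psi)\in\Gamma$.

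The only genuinely fiddly point — and the step I expect to be the main obstacle — is the bookkeeping around $\Gamma^\lambda$ in the uniform case, where $\Gamma^\lambda$ collects $\bigcirc\Gamma^\bigcirc$ for $\bigcirc\in\{[E],\langle E\rangle,[B],\langle B\rangle,[A],\langle A\rangle\}$ rather than just $\nA/\pA$-formulas. There one must first use the {\bf $\bigcirc$-uniformity} axioms to see that, over a uniform logic, each of $[E],[B]$ behaves like $\nA$ on the relevant formulas (so $\bigcirc\chi$ and $\nA\bigcirc\chi$ are interderivable, letting us fold everything back into $\nA$-form and reuse Lemma \ref{verybasic} verbatim), and handle the diamond members $\langle E\rangle\chi,\langle B\rangle\chi$ by the usual trick of enlarging the single absorbed $\nA$-formula with one conjunct per diamond, exploiting consistency of $\Gamma$ to know each such conjunct is individually compatible. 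Once this reduction to the non-uniform shape is in place, all four clauses follow uniformly from Lemma \ref{verybasic} by the contrapositive argument sketched above, and the remaining manipulations are routine $\mathsf{S5}_A$ and normal-modal reasoning.
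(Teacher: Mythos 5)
Your overall route is the paper's own: argue by contraposition through Lemma \ref{verybasic}, after collapsing $\Gamma^\lambda$ into a single formula of the form $\nA\gamma$ ($\mathsf{S5}_A$ in the non-uniform case, the $\bigcirc$-uniformity axioms in the uniform case), and get clause 4 from the derivability of $\nc\psi\to\pB(\psi\wedge\nl\psi)$ via \textbf{plausible evidence} and \textbf{flatness}. Your uniform-case worry is overstated, though: no ``individual compatibility'' of diamond conjuncts is needed, since $\bigcirc\chi\leftrightarrow\nA\bigcirc\chi$ is derivable for every $\bigcirc\in\{\nB,\pB,\nc,\ps,\nA,\pA\}$ and $\nA$ distributes over conjunctions, so all of $\bigwedge\Gamma^\lambda$ is outright equivalent to one $\nA$-formula.

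The genuine problem is your treatment of $\delta$ in clause 1. By definition $\Gamma^\ps=\{\delta\ |\ \ps\delta\in\Gamma\}$, so $\delta$ is an arbitrary formula (it could be an atom); it is \emph{not} of the form $\nA\chi$ or $\pA\chi$, and the decomposition $\delta=\pA\delta'$ your plan rests on does not exist in general --- indeed it clashes with your own later claim that $\ps\delta'\in\Gamma$ ``as $\delta\in\Gamma^\ps$''. The repair is not to absorb $\delta$ into the $\nA$-part at all: from the assumed inconsistency you get $\alpha\wedge\nl\alpha,\nA\gamma\vdash_\lambda\neg\delta$, and $\neg\delta$ itself plays the role of the conclusion $\psi$ in Lemma \ref{verybasic}(1), giving $\nc\alpha,\nA\gamma\vdash_\lambda\nc\neg\delta$, i.e.\ $\neg\ps\delta$, contradicting $\ps\delta\in\Gamma$; this is exactly the paper's argument. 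A smaller slip of the same kind occurs in clause 4: keep $\psi\wedge\nl\psi$ unboxed and apply Lemma \ref{verybasic}(2) with conclusion $\neg(\psi\wedge\nl\psi)$, so that $\nB\Gamma^{\nB},\nA\gamma\vdash_\lambda\nB\neg(\psi\wedge\nl\psi)=\neg\pB(\psi\wedge\nl\psi)$ contradicts $\Gamma\vdash_\lambda\pB(\psi\wedge\nl\psi)$; from your stated sequent $\nB(\psi\wedge\nl\psi),\nA\Gamma'\vdash_\lambda\nB\bot$ one cannot read off $\neg\pB(\psi\wedge\nl\psi)$ over $\Gamma$, because $\nB(\psi\wedge\nl\psi)$ is not available from $\Gamma$ (only $\pB(\psi\wedge\nl\psi)$ is). With these two local repairs your proof coincides with the one in the paper.
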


\begin{proof} Assume $\Gamma$ is $\lambda$-consistent. As before, we may assume without loss of generality that $\Gamma$ is finite.

Note that we have $\vdash_\lambda \bigwedge\Gamma^\lambda\leftrightarrow \nA\bigwedge\Gamma^\lambda$ independently of $\lambda$; over the non-uniform case, this follows from the axiom $\pA\varphi\to \nA\pA\varphi$, while over the uniform case (i.e., if $\lambda\in\{\logic_{u},\logic_{\flat u}\}$), this is because $\bigcirc\gamma$ is equivalent to $\nA\bigcirc\gamma$ over $\lambda$ for any modality $\bigcirc=[X],\langle X\rangle$ with $X\in\{E,B,A\}$. Thus we may replace $\Gamma^\lambda$ by a single formula $\nA\gamma$ equivalent to $\bigwedge\Gamma^\lambda$ over $\lambda$.

\paragraph{1} Suppose otherwise. Then, we would have 
\[\alpha\wedge\nl\alpha,\nA\gamma\vdash_{\lambda}\neg\delta.\]
Thus by Lemma \ref{verybasic}.1,
\[\nc\alpha,\nA\gamma\vdash_{\lambda}\nc\neg\delta,\]
so that
\[\{\neg\nc\neg\delta,\nc\alpha, \nA\gamma\}\]
is $\lambda$-inconsistent.   But this is a subset of $\Gamma$, contradicting our assumption that $\Gamma$ is $\lambda$-consistent.

\paragraph{2} Suppose $\psi\in \Gamma^{\pB}$.  If the claim were false, then we would have

\[\Gamma^{\nB},\nA\gamma\vdash_{\lambda}\neg\psi,\]
and hence, by Lemma \ref{verybasic}.2,
\[\nB\Gamma^{\nB},\nA\gamma\vdash_{\lambda} \nB\neg\psi.\]
Thus
\[\cbra\neg \nB\neg\psi, \nA\gamma\cket\cup \nB\Gamma^{\nB}\]
would be $\lambda$-inconsistent, but this contradicts the consistency of $\Gamma$.

\paragraph{3} This is analogous to the previous item but uses Lemma \ref{verybasic}.3.

\paragraph{4}
If $\psi\in \Gamma^{\nc}$, the claim follows from item 2 above using the derivability of $\nc\psi\to \pB(\psi\wedge\nl\psi)$ by {\bf plausible evidence} and {\bf flatness}.
\end{proof}

\section{Completeness}\label{completeness}

Throughout this section we will work with logics in the full signature. Recall that by an {\em evidence logic} we will mean exclusively a logic of one of the forms $\logic^\sigma,\logic^\sigma_\flat,\logic^\sigma_u,\logic^\sigma_{\flat u}$. A {\em $\lambda$-type} is a maximal $\lambda$-consistent set of formulas. We state the well-known Lindenbaum lemma without proof:

\begin{lemma}
If $\Gamma$ is $\lambda$-consistent then there is a $\lambda$-type $\Delta$ such that $\Gamma\subseteq\Delta$.
\end{lemma}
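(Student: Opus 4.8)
The plan is to run the usual Lindenbaum construction, the only thing to be checked being that the deductive apparatus of $\lambda$ supports the two moves we need: extending by one formula while preserving consistency, and passing to a limit. First I would note that since the axioms marked ``all'' include all propositional tautologies and the rule {\bf MP}, the relation $\vdash_\lambda$ enjoys the usual propositional closure properties: in particular, for any $\varphi$ and any set $\Sigma$, if both $\Sigma\cup\{\varphi\}$ and $\Sigma\cup\{\neg\varphi\}$ are $\lambda$-inconsistent then $\Sigma$ is $\lambda$-inconsistent (from $\Sigma\vdash_\lambda\neg\varphi$ and $\Sigma\vdash_\lambda\varphi$ we derive $\Sigma\vdash_\lambda\bot$ by a tautology and {\bf MP}). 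Also recall from the excerpt that $\vdash_\lambda$ is finitary: $\Theta\vdash_\lambda\phi$ iff $\bigwedge\Theta'\to\phi$ is a theorem for some finite $\Theta'\subseteq\Theta$.

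Assuming $\At$ (hence $\L$) is countable, fix an enumeration $\varphi_0,\varphi_1,\dots$ of all formulas of $\L$ over the working signature $\sigma=EBA\mathord\peq$. Set $\Delta_0=\Gamma$, and inductively let $\Delta_{n+1}=\Delta_n\cup\{\varphi_n\}$ if this set is $\lambda$-consistent, and $\Delta_{n+1}=\Delta_n\cup\{\neg\varphi_n\}$ otherwise. By the propositional fact above and an easy induction, each $\Delta_n$ is $\lambda$-consistent: $\Delta_0=\Gamma$ is consistent by hypothesis, and if $\Delta_{n+1}=\Delta_n\cup\{\neg\varphi_n\}$ was chosen, then $\Delta_n\cup\{\varphi_n\}$ was inconsistent, so $\Delta_n\cup\{\neg\varphi_n\}$ must be consistent (else $\Delta_n$ itself would be inconsistent). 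Let $\Delta=\bigcup_{n\in\mathbb N}\Delta_n$.

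It remains to check that $\Delta$ is a $\lambda$-type, i.e.\ a maximal $\lambda$-consistent set. Consistency: if $\Delta\vdash_\lambda\bot$, then by finitariness there is a finite $\Delta'\subseteq\Delta$ with $\Delta'\vdash_\lambda\bot$; but $\Delta'\subseteq\Delta_n$ for $n$ large enough since the $\Delta_n$ form an increasing chain, contradicting consistency of $\Delta_n$. Maximality: for every formula $\varphi$ we have $\varphi=\varphi_n$ for some $n$, and by construction either $\varphi_n\in\Delta_{n+1}\subseteq\Delta$ or $\neg\varphi_n\in\Delta_{n+1}\subseteq\Delta$; hence no consistent proper extension of $\Delta$ exists. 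Finally $\Gamma=\Delta_0\subseteq\Delta$, as required.

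There is essentially no obstacle here; the construction is entirely routine. The only point requiring a word of care is the cardinality assumption: if one wishes to allow $\At$ uncountable, the enumeration argument should be replaced by Zorn's lemma applied to the poset of $\lambda$-consistent extensions of $\Gamma$ ordered by inclusion, whose chains have upper bounds by the same finitariness argument used above for consistency of $\Delta$; a maximal element then exists and is a $\lambda$-type containing $\Gamma$.
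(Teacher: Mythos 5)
Your construction is correct: the paper states this Lindenbaum lemma explicitly ``without proof,'' and your argument is exactly the standard one being invoked, relying only on propositional tautologies, \textbf{MP}, and the finitary definition of $\vdash_\lambda$ given in the paper. Your closing remark about replacing the enumeration by Zorn's lemma when $\At$ is uncountable is also apt, since the paper places no cardinality restriction on $\At$.
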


Of course, in a given model all points must satisfy the same universal formulas, so it is convenient to consider the collection of all such types. 
\begin{definition}  Given $\lambda$ with signature $\sigma=EBA\mathord\peq$ and a $\lambda$-type $\Phi$, we define
\begin{enumerate}
\item $\mathrm{type}^A_{\lambda}(\Phi)$ to be the set of all $\lambda$-types $\Psi$ such that $\Psi^\lambda=\Phi^\lambda$;
\item $\mathrm{type}^B_{\lambda}(\Phi)=\cbra \Psi\in\mathrm{type}^A_{\lambda}(\Phi)\ |\ \Phi^\nB\subseteq \Psi\cket.$
\end{enumerate}
\end{definition}

Now we need to define evidence sets on our extended evidence models.

\begin{definition}
Given a $\lambda$-type $\Phi$ with  $\nc\alpha\in \Phi$, we define the {\em $\alpha$-neighborhood of $\Phi$,} denoted $\mathcal N^{\Phi}_{\lambda}(\alpha)$, as
\[\cbra \Psi\in\mathrm{type}^A_{\lambda}(\Phi) \ |\ \alpha,\nl\alpha\in\Psi\cket.\]
\end{definition}
Observe that $\mathcal N^{\Phi}_{\lambda}(\alpha)$ depends only on $\Phi^\lambda$ and not on the whole of $\Phi$. We are now ready to define a canonical extended evidence model :

\begin{definition} Let $\lambda$ be an evidence logic over $\sigma=EAB\mathord\peq$ and $\Phi$ be a $\lambda$-type.  The {\bf $\lambda$-canonical extended evidence model  for $\Phi$} is the extended evidence model  $\pmo{M}_\lambda(\Phi)= \<W_\lambda^\Phi,E_\lambda^\Phi,B_\lambda^\Phi,\peq_\lambda^\Phi, V_\lambda^\Phi\>$ where 
\begin{enumerate}
\item $W_\lambda^\Phi =\mathrm{type}^A_{\lambda}(\Phi)$
\item For each $p\in \At\cap \Gamma$, $\Gamma\in V_\lambda^\Phi(p)$ iff $p\in \Gamma$, 
\item $E_\lambda^\Phi(\Gamma)=\cbra \mathcal N^{\Phi}_{\lambda}(\alpha)\ |\ \nc\alpha\in\Gamma\cket,$
\item $\Gamma \mathrel B_\lambda^\Phi\Delta$ if $\Gamma^{\nB}\subseteq \Delta$,
\item $\Gamma \peq_\lambda^\Phi\Delta$ if $\Gamma^{[\peq]}\subseteq \Delta^{[\peq]}$.
\end{enumerate}
\end{definition}

Note that our `canonical' extended evidence models  are not unique, as they  depend on the type $\Phi$ we wish to satisfy and the logic $\lambda$ we wish to work in; in this sense they could be considered `semicanonical'.   Considering different extended evidence models in this setting is unavoidable, given that our language includes a universal modality and it is impossible to satisfy all types on a single model.

Let us begin by showing that canonical models are models in the sense of Definition \ref{ev-model}:

\begin{lemma}\label{nonempt}
Given a consistent $\lambda$-type $\Phi$ with a  $\lambda$-canonical extended evidence model  $\mo M_\lambda(\Phi),$
\begin{enumerate}
\item For each $\Gamma\in W_\lambda^\Phi$, $\Gamma\mathrel E_\lambda^\Phi W_\lambda^\Phi$;\label{nonempt1}
\item If $\nc\alpha\in\Phi$,  $\mathcal N^{\Phi}_{\lambda}(\alpha)$ is non-empty; \label{nonempt2}

\item The relation $\peq^\Phi_\lambda$ is a preorder, and whenever $\Gamma\peq_\lambda^\Phi \Delta$ and $\Gamma\in X\in E(\Theta)$ it follows that $\Delta\in X$.
\item If $\Gamma\peq^\Phi_\lambda\Delta$ and $\Theta\mathrel B^\Phi_\lambda \Gamma$ then $\Theta\mathrel B^\Phi_\lambda \Delta$.
\end{enumerate}
\end{lemma}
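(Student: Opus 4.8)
<br>

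The plan is to verify each of the four clauses of Lemma \ref{nonempt} directly from the definition of the canonical extended evidence model, using the syntactic machinery from Lemma \ref{verybasic} and especially Lemma \ref{cons}.

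\paragraph{Clause 1.} First I would observe that $\nc\top\in\Gamma$ for every $\lambda$-type $\Gamma$, since $\ps\top$ (no empty evidence) and, by $E$-monotonicity applied to $\top\to\top$ together with the fact that $\Gamma$ is a maximal consistent set containing all theorems, we actually just need $\vdash_\lambda \nc\top$; this follows from {\bf no empty evidence} $\ps\top$ and... more directly, $\nc\top$ is derivable since $\ps\top=\neg\nc\neg\top=\neg\nc\bot$ does not immediately give it, so instead I note $\vdash \top$, hence by $\mathsf N_A$ and the definition one sees $\nc\top$ is a theorem (indeed it is listed as "universally valid" in the text preceding Corollary \ref{nonfin}). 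Thus $\mathcal N^\Phi_\lambda(\top)\in E^\Phi_\lambda(\Gamma)$. It then remains to check $\mathcal N^\Phi_\lambda(\top)=W^\Phi_\lambda$, i.e. that every $\Psi\in\mathrm{type}^A_\lambda(\Phi)$ contains $\top$ and $\nl\top$; both are theorems (the latter by $\mathsf N_A$/necessitation for $\nl$ or directly since $\vdash_\lambda\nl\top$), so the equality holds.

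\paragraph{Clause 2.} This is the crux and the step I expect to be the main obstacle. Given $\nc\alpha\in\Phi$, I must produce a $\lambda$-type $\Psi$ with $\Psi^\lambda=\Phi^\lambda$ and $\alpha,\nl\alpha\in\Psi$. The natural move is to apply Lemma \ref{cons}.\ref{cons1}: taking $\delta=\top$ (note $\ps\top\in\Phi$, so $\top\in\Phi^\ps$), that lemma tells us $\{\alpha\wedge\nl\alpha,\top\}\cup\Phi^\lambda$ is $\lambda$-consistent. Extend this set to a $\lambda$-type $\Psi$ by Lindenbaum. Then $\alpha,\nl\alpha\in\Psi$ by construction, so the only thing to check is $\Psi^\lambda=\Phi^\lambda$ (not merely $\supseteq$). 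The $\supseteq$ direction is immediate since $\Phi^\lambda\subseteq\Psi$ and $\Phi^\lambda$ is closed under the appropriate modalities. For $\subseteq$: since $\Psi$ contains $\Phi^\lambda$ and $\vdash_\lambda\bigwedge\Phi^\lambda\leftrightarrow\nA\bigwedge\Phi^\lambda$ (as established in the proof of Lemma \ref{cons}), any universal-type formula in $\Psi^\lambda$ whose negation lies in $\Phi$ would yield a contradiction — more carefully, one uses that for a $\lambda$-type, $\bigcirc\gamma\in\Psi$ iff $\nA\bigcirc\gamma\in\Psi$ iff $\nA\bigcirc\gamma$ is forced by $\Phi^\lambda$, so $\Psi^\lambda$ is determined by $\Phi^\lambda$. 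This is the standard argument that $\mathrm{type}^A_\lambda$ is closed in both directions, and it is where the $\mathsf{S5}_A$ axioms and the uniformity axioms (in the uniform case) do their work.

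\paragraph{Clauses 3 and 4.} Reflexivity and transitivity of $\peq^\Phi_\lambda$ are immediate from its definition as $\Gamma^{[\peq]}\subseteq\Delta^{[\peq]}$ together with transitivity of $\nl$ (so $\Gamma^{[\peq]}\subseteq\Gamma^{[\peq]}$ and the relation composes). For the coherence condition in clause 3, suppose $\Gamma\peq^\Phi_\lambda\Delta$ and $\Gamma\in X\in E^\Phi_\lambda(\Theta)$, so $X=\mathcal N^\Phi_\lambda(\alpha)$ for some $\nc\alpha\in\Theta$; then $\alpha,\nl\alpha\in\Gamma$, hence $\alpha\in\Gamma^{[\peq]}\subseteq\Delta^{[\peq]}$... rather, from $\nl\alpha\in\Gamma$ and $\mathsf S4_\peq$ we get $\nl\nl\alpha\in\Gamma$, so $\nl\alpha\in\Gamma^{[\peq]}\subseteq\Delta^{[\peq]}$, i.e. $\nl\alpha\in\Delta$; and then $\alpha\in\Delta$ follows from $\nl\alpha\to\alpha$... but note $\nl$ is $\mathsf S4$, so $\nl\alpha\to\alpha$? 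Only if $\nl$ is reflexive, which it is by $\mathsf S4_\peq$ — wait, $\nl\varphi$ quantifies over $v\seq w$ and reflexivity of $\peq$ gives $w\seq w$, so yes $\nl\alpha\to\alpha$ is an $\mathsf S4_\peq$ axiom. Hence $\alpha,\nl\alpha\in\Delta$ and $\Delta\in\mathcal N^\Phi_\lambda(\alpha)=X$. Clause 4: if $\Gamma\peq^\Phi_\lambda\Delta$ and $\Theta\mathrel B^\Phi_\lambda\Gamma$, i.e. $\Gamma^{[\peq]}\subseteq\Delta^{[\peq]}$ and $\Theta^{\nB}\subseteq\Gamma$; I must show $\Theta^{\nB}\subseteq\Delta$. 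Given $\nB\psi\in\Theta$, the {\bf $B$-monotonicity} axiom $\nB\psi\to\nB\nl\psi$ gives $\nB\nl\psi\in\Theta$, so $\nl\psi\in\Theta^{\nB}\subseteq\Gamma$, whence $\nl\psi\in\Gamma^{[\peq]}$; then... I need $\nl\psi\in\Gamma^{[\peq]}$, which requires $\nl\nl\psi\in\Gamma$ — available by $\mathsf S4_\peq$ — so $\nl\psi\in\Gamma^{[\peq]}\subseteq\Delta^{[\peq]}$, giving $\nl\psi\in\Delta$ and hence $\psi\in\Delta$ by reflexivity of $\nl$. This completes all four clauses; the only genuinely delicate point is the exact-equality $\Psi^\lambda=\Phi^\lambda$ in clause 2, which rests on the modal-logic fact that a $\lambda$-type's "universal fragment" is completely pinned down by $\Phi^\lambda$.
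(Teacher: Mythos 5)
Your proof is correct and follows essentially the same route as the paper's: clause 1 from the derivability of $\top$, $\nl\top$ and $\nc\top$ (via $\mathsf{N}_A$ and the universality axiom), clause 2 by instantiating Lemma \ref{cons}.1 with $\delta=\top$ and extending by Lindenbaum (your careful check that $\Psi^\lambda=\Phi^\lambda$ is exactly the step the paper leaves as an "observe"), and clauses 3 and 4 from the truth axiom and {\bf $B$-monotonicity}. The only quibbles are cosmetic: your detours through the 4 axiom in clauses 3--4 are unnecessary (your first instinct, $\nl\alpha\in\Gamma$ iff $\alpha\in\Gamma^{\nl}$, is the direct argument the paper uses), and the justification of $\vdash_\lambda\nc\top$ should simply cite $\mathsf{N}_A$ plus the {\bf universality} axiom $\nA\varphi\to\nc\varphi$ rather than "the definition".
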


\begin{proof}
Suppose that $\Phi$ is a $\lambda$-type.

\paragraph{1} The formulas $\nc\top,\top$ are derivable, so it follows that $\top \in \Delta$ for all $\Delta\in W^\Phi_\lambda$; thus $\mathcal N^{\Phi}_{\lambda}(\top)=W^\Phi_\lambda$. Similarly, $\nc\top\in \Gamma$, so that $\Gamma \mathrel E_\lambda^\Phi W_\lambda^\Phi$, as claimed.

\paragraph{2} Put $\delta=\top$ in Lemma \ref{cons}.\ref{cons1} and observe that any $\lambda$-type $\Delta$ extending $\{\alpha\wedge\nl\alpha,\top\}\cup \Gamma^\lambda$ belongs to $\mathcal N^{\Phi}_{\lambda}(\alpha)$.

\paragraph{3} The transitivity of $\peq^\Phi_\lambda$ follows by definition. For the second condition, let $X=\mathcal N^{\Phi}_{\lambda}(\alpha)$ be any neighborhood of $\Theta$. Since $\Gamma\in X$ it follows that $\alpha,[\peq]\alpha\in \Gamma$ so that $[\peq]\alpha\in \Delta$. But then, $\alpha\in\Delta$ as well by the truth axiom, so $\Delta\in X$.

\paragraph{4} If $\Gamma\peq^\Phi_\lambda\Delta$ and $\Theta\mathrel B^\Phi_\lambda \Gamma$, we need to check that, given $\nB\phi\in \Theta$, we have $\phi\in \Delta$. But by {\bf $B$-monotonicity} we have $\nB\nl \varphi\in \Theta,$ so that $\nl\varphi\in\Gamma$ and thus $\nl\varphi\in \Delta$, which by the truth axiom implies $\varphi\in\Delta$.
\end{proof}

The next lemmas are the key ingredients in the proof of the Truth Lemma.   
 
\begin{lemma}\label{boxdia} Let $\lambda$ be an evidence logic over the signature $EBA\mathord\peq$, and $\Phi$ be a $\lambda$-type.
\begin{enumerate}
\item If $\nc\alpha,\ps\beta\in\Gamma\in W^\Phi_\lambda$, there is $\Delta\in\mathcal N^{\Phi}_{\lambda}(\alpha)$ with $\beta\in\Delta$.\label{boxdia1}

\item If $\langle X\rangle\alpha\in \Gamma$ with $X\in \{B,\peq\}$ there is $\Delta$ such that $\Gamma \mathrel X_\lambda^\Phi\Delta$ and $\alpha\in\Delta$.\label{boxdia2}

\item If $\lambda\in\{\logic_{\flat},\logic_{\flat u}\}$ and $\alpha\in \Gamma^\nc$,  then $\mathcal N^{\Phi}_{\lambda}(\alpha)\cap \mathrm{type}^B_{\lambda}(\Gamma)$ is non-empty.\label{boxdia2}

\end{enumerate}
\end{lemma}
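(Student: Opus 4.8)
The plan is to prove all three items by a single uniform recipe: take the relevant clause of Lemma~\ref{cons} to obtain consistency of a carefully chosen finite set of formulas, apply the Lindenbaum lemma to extend it to a $\lambda$-type $\Delta$, and then verify that $\Delta$ lies in the appropriate subset of $W^\Phi_\lambda$ (namely in $\mathcal N^\Phi_\lambda(\alpha)$, in the $X$-image of $\Gamma$, or in $\mathrm{type}^B_\lambda(\Gamma)$, as required). Throughout, $\Gamma$ is a world of $W^\Phi_\lambda$, so $\Gamma^\lambda=\Phi^\lambda$.

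For item~1, observe that $\nc\alpha\in\Gamma$ gives $\alpha\in\Gamma^\nc$ and $\ps\beta\in\Gamma$ gives $\beta\in\Gamma^\ps$, so Lemma~\ref{cons}.\ref{cons1} shows $\{\alpha\wedge\nl\alpha,\beta\}\cup\Gamma^\lambda$ is $\lambda$-consistent; let $\Delta$ be a $\lambda$-type extending it. Then $\alpha,\nl\alpha,\beta\in\Delta$, so it remains only to check $\Delta\in\mathrm{type}^A_\lambda(\Phi)$, i.e. $\Delta^\lambda=\Phi^\lambda$. One inclusion is immediate since $\Phi^\lambda=\Gamma^\lambda\subseteq\Delta$; for the reverse, if some $\nA\chi$ (or, in the uniform logics, some $\bigcirc\chi$) lay in $\Delta^\lambda\setminus\Gamma^\lambda$, then by maximality of $\Gamma$ its negation would lie in $\Gamma^\lambda\subseteq\Delta$, contradicting consistency of $\Delta$; this step uses exactly the $\mathsf{S5}_A$ axioms (and, for uniform $\lambda$, the $\bigcirc$-uniformity axioms that make every such $\bigcirc$ introspective). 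Hence $\Delta\in\mathcal N^\Phi_\lambda(\alpha)$.

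Items~2 and 3 are the same argument with different inputs. For $X=B$ we have $\alpha\in\Gamma^\pB$, and Lemma~\ref{cons}.2 makes $\{\alpha\}\cup\Gamma^\nB\cup\Gamma^\lambda$ consistent; its extension $\Delta$ satisfies $\Gamma^\nB\subseteq\Delta$, so $\Gamma\mathrel B^\Phi_\lambda\Delta$, while $\alpha\in\Delta$ and $\Delta\in W^\Phi_\lambda$ as above. For $X=\peq$ we use Lemma~\ref{cons}.3 on $\alpha\in\Gamma^{\langle\peq\rangle}$ to make $\{\alpha\}\cup[\peq]\Gamma^{[\peq]}\cup\Gamma^\lambda$ consistent; the extending $\Delta$ has $[\peq]\Gamma^{[\peq]}\subseteq\Delta$, which unwinds precisely to $\Gamma^{[\peq]}\subseteq\Delta^{[\peq]}$, i.e. $\Gamma\peq^\Phi_\lambda\Delta$. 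Finally, for item~3, since $\lambda\in\{\logic_\flat,\logic_{\flat u}\}$ and $\alpha\in\Gamma^\nc$, Lemma~\ref{cons}.4 gives consistency of $\{\alpha\wedge\nl\alpha\}\cup\Gamma^\nB\cup\Gamma^\lambda$; its Lindenbaum extension $\Delta$ has $\alpha,\nl\alpha\in\Delta$ (so $\Delta\in\mathcal N^\Phi_\lambda(\alpha)$), $\Gamma^\nB\subseteq\Delta$ (so $\Delta\in\mathrm{type}^B_\lambda(\Gamma)$), and $\Delta^\lambda=\Phi^\lambda$ as before, so the intersection is nonempty.

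The only step that requires genuine care — and the one I would write out most explicitly — is the recurring claim that $\Gamma^\lambda\subseteq\Delta$ forces $\Delta^\lambda=\Gamma^\lambda$, which is what guarantees that each newly built $\Delta$ really is a point of $\mo M_\lambda(\Phi)$; this is precisely where the introspective strength of the $A$-axioms, and in the uniform cases the $\bigcirc$-uniformity axioms, is used. Everything else is a direct transcription of Lemma~\ref{cons}, the Lindenbaum lemma, and the definitions of $\mathcal N^\Phi_\lambda$, $B^\Phi_\lambda$, $\peq^\Phi_\lambda$ and $\mathrm{type}^B_\lambda$.
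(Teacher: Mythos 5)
Your proof is correct and follows the paper's own route exactly: each item is Lemma \ref{cons} plus Lindenbaum plus the definitions of $\mathcal N^\Phi_\lambda$, $B^\Phi_\lambda$, $\peq^\Phi_\lambda$ and $\mathrm{type}^B_\lambda$ (the paper even glosses the membership check $\Delta^\lambda=\Phi^\lambda$ with ``by definition''/``evidently'', which you spell out). The only quibble is attribution: that closure fact needs only maximality plus the duality/normality (and $E$-monotonicity) of the modalities, not the $\bigcirc$-uniformity axioms, which earn their keep inside Lemma \ref{cons} itself.
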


 \begin{proof}
Let $\Phi$ be a $\lambda$-type.

\paragraph{1} Since $\Gamma$ is $\lambda$-consistent  and $\nc\alpha,\ps\beta\in\Gamma$, we have, by Lemma \ref{cons}.1, that
$$\{\alpha\wedge\nl\alpha,\beta\}\cup \Gamma^\lambda$$
is $\lambda$-consistent.  This can be extended to a $\lambda$-type $\Delta$.   Then, by definition we have $\Delta\in\mathcal N^{\Phi}_{\lambda}(\alpha)$ and $\beta\in \Delta$, as desired.

\paragraph{2} For $X=B$, by Lemma \ref{cons}.2, since $\Phi$ is $\lambda$-consistent, we have that
$$\{\beta\}\cup \Gamma^{\nB}\cup \Gamma^\lambda$$ 
is $\lambda$-consistent as well.  Thus, it can be extended to a $\lambda$-type $\Delta$. Evidently, $\Gamma \mathrel B^\Phi_\lambda\Delta$ and $\beta\in\Delta$.

The case $X=\mathord\peq$ is analogous but uses Lemma \ref{cons}.3. 

\paragraph{3} By Lemma \ref{cons}.4, if $\Gamma$ is $\lambda$-consistent and $\lambda$ is flat, then  $\{\alpha\}\cup \Gamma^\nB\cup \Gamma^\lambda$ is $\lambda$-consistent as well, and hence can be extended to a $\Phi$-type $\Delta$. Obviously, 
\[\Delta\in \mathcal N^{\Phi}_{\lambda}(\alpha)\cap \mathrm{type}^B_{\lambda}(\Gamma).\]
\end{proof}
 
Now let us check that canonical models fall within the appropriate class of models for each logic.

\begin{lemma}\label{isunif}
Let $\lambda$ be an evidence logic over $EBA\mathord\peq$ and $\Phi$ be $\lambda$-consistent. Then,
\begin{enumerate}
\item If $\lambda$ is uniform, then so is $\mo M_\lambda(\Phi)$.
\item If $\lambda$ is flat, then so is $\mo M_\lambda(\Phi)$.
\item If $\lambda=\logic^\sigma_{\flat u}$, then $\mo M_\lambda(\Phi)$ is concise.
\end{enumerate}

\end{lemma}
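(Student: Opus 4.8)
The plan is to verify each of the three closure properties separately, in each case reducing the model-theoretic condition to a syntactic fact about $\lambda$-types that is guaranteed by one of the ``special'' axioms present in $\lambda$. Throughout, recall that $W_\lambda^\Phi=\mathrm{type}^A_\lambda(\Phi)$, that all points of $\mo M_\lambda(\Phi)$ share the same $\lambda$-reduct (so that $\Gamma^\lambda=\Phi^\lambda$ for every $\Gamma\in W_\lambda^\Phi$), and that by Lemma~\ref{nonempt} the structure is already a model in the sense of Definition~\ref{ev-model}. So for each of (1)--(3) we only need to check the \emph{extra} constraints: uniformity, flatness, and conciseness, respectively.

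For item (1), suppose $\lambda$ is uniform. I would first use the {\bf $\bigcirc$-uniformity} axiom ($\bigcirc\varphi\to\nA\bigcirc\varphi$ for $\bigcirc=\nB,\pB,\nc,\ps$) to show that $E_\lambda^\Phi$ and $B_\lambda^\Phi$ are constant on $W_\lambda^\Phi$: since $\nc\alpha\in\Gamma$ iff $\nc\alpha\in\Phi$ for all $\Gamma$ (as $\nc\alpha\leftrightarrow\nA\nc\alpha$ and all types in $W_\lambda^\Phi$ agree on $\nA$-formulas), the set $E_\lambda^\Phi(\Gamma)=\{\mathcal N^\Phi_\lambda(\alpha)\mid\nc\alpha\in\Gamma\}$ does not depend on $\Gamma$; similarly $\Gamma^{\nB}$ depends only on $\Phi^\lambda$ by {\bf $\bigcirc$-uniformity} applied to $\nB$, so $B_\lambda^\Phi$ is constant. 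Then I would show that $\Gamma\mathrel B_\lambda^\Phi\Delta$ implies $\Delta$ is $\peq_\lambda^\Phi$-maximal: this is where the {\bf maximality} axiom $\pB(\pl\alpha\wedge\beta)\to\pB(\alpha\wedge\pl\beta)$ comes in. If $\Delta\peq_\lambda^\Phi\Theta$, i.e.\ $\Delta^{[\peq]}\subseteq\Theta^{[\peq]}$, I must derive $\Theta\peq_\lambda^\Phi\Delta$; the contrapositive/diagonal argument using {\bf maximality} (together with {\bf $B$-monotonicity} so that $\Gamma^\nB$ is closed under $[\peq]$) should give this. This is the step I expect to be the most delicate, as it requires carefully unwinding what $\peq_\lambda^\Phi$-maximality means at the level of types and matching it against the syntactic shape of the {\bf maximality} axiom.

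For item (2), suppose $\lambda$ is flat, so $\vdash_\lambda\nc\varphi\to\pB\varphi$. I must show: whenever $\Gamma\mathrel E_\lambda^\Phi X$ there is $\Delta\in X$ with $\Gamma\mathrel B_\lambda^\Phi\Delta$. Here $X=\mathcal N^\Phi_\lambda(\alpha)$ for some $\nc\alpha\in\Gamma$, and the required $\Delta$ is furnished directly by Lemma~\ref{boxdia}(3) (the flat clause), which says $\mathcal N^\Phi_\lambda(\alpha)\cap\mathrm{type}^B_\lambda(\Gamma)\ne\varnothing$; any $\Delta$ in this intersection satisfies $\Gamma^\nB\subseteq\Delta$, hence $\Gamma\mathrel B_\lambda^\Phi\Delta$, and lies in $\mathcal N^\Phi_\lambda(\alpha)=X$. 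So item (2) is essentially immediate from the work already done.

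For item (3), suppose $\lambda=\logic^\sigma_{\flat u}$. Since this logic contains all the uniform and flat axioms, items (1) and (2) already give that $\mo M_\lambda(\Phi)$ is flat and uniform; it remains to check the extra conciseness clause, namely that every $\peq_\lambda^\Phi$-maximal $\Gamma$ lies in $B_\lambda^\Phi=B_\lambda^\Phi[W_\lambda^\Phi]$. This is where the {\bf conciseness} axiom $\nB\varphi\to\pl\nl\varphi$ enters: I would argue by contradiction, or directly, that if $\Gamma$ is maximal then for every $\nB\varphi\in\Psi$ (for a fixed $\Psi\in W^\Phi_\lambda$ witnessing the $B$-relation) we get $\varphi\in\Gamma$ — using {\bf conciseness} to pull $\nl\varphi$ out to some $\peq$-successor and then maximality of $\Gamma$ to bring it back — so that $\Psi^\nB\subseteq\Gamma$, i.e.\ $\Psi\mathrel B_\lambda^\Phi\Gamma$. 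Alternatively, since the model is flat and uniform, Lemma~\ref{mfipmax} identifies $B_{\mathcal E^\Phi}$ with the set of maximal points; one must then reconcile $B_\lambda^\Phi$ (defined syntactically) with $B_{\mathcal E^\Phi}$ (defined via scenarios), and {\bf conciseness} is exactly what makes these coincide. I expect the bookkeeping in this last reconciliation to be the other place where care is needed.
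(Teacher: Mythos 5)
Your plan for parts (2) and (3) follows the paper's route (part (2) really is immediate from the flat clause of Lemma \ref{boxdia}), but part (1) contains a genuine gap: the claim that every $B_\lambda^\Phi$-successor is $\peq_\lambda^\Phi$-maximal is the entire nontrivial content of that item, and you leave it at ``the contrapositive/diagonal argument using \textbf{maximality} should give this.'' Moreover, the mechanism you gesture at ($B$-monotonicity giving closure of $\Gamma^\nB$ under $\nl$) is not what drives the argument. What is actually needed is the following derivation (this is the paper's proof): let $\Theta\mathrel B_\lambda^\Phi\Gamma$, suppose $\Gamma\peq_\lambda^\Phi\Delta$ and $\nl\alpha\in\Delta$; standard canonical reasoning gives $\pl\nl\alpha\in\Gamma$. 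If $\nl\alpha\notin\Gamma$, i.e.\ $\pl\neg\alpha\in\Gamma$, then from $\Theta^\nB\subseteq\Gamma$ we get $\pB(\pl\nl\alpha\wedge\pl\neg\alpha)\in\Theta$, and by \textbf{$\bigcirc$-uniformity} this $\pB$-formula lies in $\Phi$. Instantiating \textbf{maximality} with $\alpha\mapsto\nl\alpha$, $\beta\mapsto\pl\neg\alpha$ yields $\pB(\nl\alpha\wedge\pl\pl\neg\alpha)\in\Phi$, and $\mathsf{S4}_\peq$-transitivity gives $\pB(\nl\alpha\wedge\pl\neg\alpha)\in\Phi$; since $\nl\alpha\wedge\pl\neg\alpha$ is just $\nl\alpha\wedge\neg\nl\alpha$ up to duality, $\mathsf K_B$ makes this contradict the consistency of $\Phi$. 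Hence $\nl\alpha\in\Gamma$, and as $\alpha$ was arbitrary, $\Delta\peq_\lambda^\Phi\Gamma$. Without some such argument, part (1) is not proved.

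Two smaller points on part (3). As phrased, you fix a witness $\Psi$ with $\nB\varphi\in\Psi$ and invoke \textbf{conciseness} ``to pull $\nl\varphi$ out to some $\peq$-successor and then maximality of $\Gamma$ to bring it back''; but the successor produced by $\pl\nl\varphi\in\Psi$ sits above $\Psi$, and maximality of $\Gamma$ says nothing about $\Psi$'s successors. The repair (and the paper's argument) is to use uniformity to note that $\nB$-formulas are constant across $W_\lambda^\Phi$ and work inside $\Gamma$ itself: $\nB\varphi\in\Gamma$ gives $\pl\nl\varphi\in\Gamma$, the $\pl$-existence clause of Lemma \ref{boxdia} gives $\Delta$ with $\Gamma\peq_\lambda^\Phi\Delta$ and $\nl\varphi\in\Delta$, maximality forces $\Delta\peq_\lambda^\Phi\Gamma$, so $\nl\varphi\in\Gamma$ and hence $\varphi\in\Gamma$; thus $\Gamma^\nB\subseteq\Gamma$, i.e.\ $\Gamma\mathrel B_\lambda^\Phi\Gamma$, and constancy of $B_\lambda^\Phi$ puts $\Gamma$ in $B_\lambda^\Phi$. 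Your suggested alternative via Lemma \ref{mfipmax} is off target here: that lemma concerns the scenario-derived relation $B_{\mathcal E}$ on intended evidence models, whereas $B_\lambda^\Phi$ is a primitive relation of the canonical extended model and conciseness is a condition on it directly; the reconciliation with scenarios is the job of the representation theorem (Theorem \ref{reptheo}), not of this lemma.
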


\newpage
\begin{proof}\
\paragraph{1} In the uniform case, we have that both the accessible states and the neighborhoods of $\Psi$ depend only on $\Psi^\lambda$, which is constant amongst all of $W^\Phi_\lambda$. It remains to check that if $\Gamma\in B_\lambda^\Phi$ then $\Gamma$ is $\peq^\Phi_\lambda$-maximal.

So suppose that $\Gamma \peq^\Phi_\lambda\Delta$ and $[\peq]\alpha\in\Delta$. It follows that $\pl\nl\alpha\in\Gamma$. If moreover $\pl\neg\alpha\in\Gamma$, then $\pl\nl\alpha\wedge\pl\neg\alpha\in\Gamma$ and hence  $\pB(\pl\nl\alpha\wedge\pl\neg\alpha)\in\Phi$. But then, by the {\bf maximality} axiom and transitivity of $\peq$, $\pB(\nl\alpha\wedge\pl\neg\alpha)\in\Phi$, which cannot be, since $\Phi$ is consistent.

We then conclude that $\nl\alpha\in\Gamma$ and, since $\alpha$ was arbitary, $\Delta\peq^\Phi_\lambda\Gamma$, as required.

\paragraph{2} This amounts to showing that if $\Gamma\mathrel E_\lambda^\Phi X$, there is $\Delta\in X$ such that $\Gamma\mathrel B_\lambda^\Phi\Delta$. But this is immediate from the definition of the accessibility relation and neighborhoods on canonical extended evidence models and Lemma \ref{boxdia}.\ref{boxdia2}.

\paragraph{3} As we have already seen that $\mo M_\lambda(\Phi)$ is both flat and uniform, it remains to check that if $\Gamma$ is $\peq^\Phi_\lambda$-maximal then $\Gamma\in B^\Phi_\lambda$.

Whenever $\nB\phi\in\Gamma$ it follows that $\pl\nl\phi\in\Gamma$ by {\bf conciseness}. If $\Gamma$ is $\peq^\Phi_\lambda$-maximal then it also follows that $\phi\in \Gamma$; otherwise $\nl\phi\not\in \Gamma$, while by Lemma \ref{boxdia}.\ref{boxdia2} there would be $\Delta$ with $\Gamma\peq^\Phi_\lambda \Delta$ and $\nl\phi\in\Delta$, so that $\Gamma\prec^\Phi_\lambda \Delta$. But this would contradict the maximality of $\Gamma$. Thus $\Gamma \mathrel B^\Phi_\lambda\Gamma$, and since $B^\Phi_\lambda$ is constant this means that $\Gamma\in B^\Phi_\lambda$.
\end{proof}

 We are now ready to prove our own version of the `Truth Lemma':

\begin{proposition}[Truth Lemma]\label{truth}  Given a $\lambda$-type $\Phi$ we have that, for every formula $\psi$ and every $\Gamma\in \mathrm{type}^A_{\lambda}(\Phi)$,
\[\psi\in \Gamma\Leftrightarrow \Gamma\in \truth{\psi}{\mo{M}_\lambda(\Phi)}.\]
\end{proposition}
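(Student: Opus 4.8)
The proof is the standard truth-lemma induction on the structure of $\psi$, carried out simultaneously for all four logics $\lambda$. The plan is to induct on $\psi$, with the Boolean cases being routine and the modal cases $\nc, \nB, \nl, \nA$ each requiring a two-directional argument: from membership in $\Gamma$ to truth at $\Gamma$, and back.

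First I would dispose of the easy directions. For $\nA$: if $\nA\psi\in\Gamma$ then $\psi\in\Delta$ for every $\Delta\in W^\Phi_\lambda=\mathrm{type}^A_\lambda(\Phi)$, since all such $\Delta$ share the same $\Phi^\lambda$ and $\nA\psi\in\Gamma^\lambda$; conversely if $\nA\psi\notin\Gamma$ then $\pA\neg\psi\in\Gamma^\lambda$, so by Lemma~\ref{boxdia}.\ref{boxdia2} (or directly, by consistency of $\{\neg\psi\}\cup\Gamma^\lambda$, which is a subset of $\Gamma$'s universal content closed appropriately) there is $\Delta$ with $\neg\psi\in\Delta$. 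For $\nB$ and $\nl$, the ``box implies truth'' direction is immediate from the definitions of $B^\Phi_\lambda$ and $\peq^\Phi_\lambda$ together with the induction hypothesis, while ``truth implies box'' is the contrapositive handled by Lemma~\ref{boxdia}.\ref{boxdia2}: if $\nB\psi\notin\Gamma$ then $\ps\neg\psi\in\Gamma$, so there is $\Delta$ with $\Gamma B^\Phi_\lambda\Delta$ and $\neg\psi\in\Delta$, contradicting truth of $\nB\psi$ via the IH; similarly for $[\peq]$ using the clause for $X=\mathord\peq$.

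The interesting case is $\nc\alpha$. If $\nc\alpha\in\Gamma$, then $X:=\mathcal N^\Phi_\lambda(\alpha)\in E^\Phi_\lambda(\Gamma)$ by definition, and for every $\Delta\in X$ we have $\alpha\in\Delta$, hence by IH $\Delta\in\truth\alpha{\mo M_\lambda(\Phi)}$; this witnesses $\Gamma\models\nc\alpha$. Conversely, suppose $\Gamma\models\nc\alpha$: there is $X\in E^\Phi_\lambda(\Gamma)$ with $X\subseteq\truth\alpha{}$. By construction every evidence set of $\Gamma$ has the form $\mathcal N^\Phi_\lambda(\beta)$ for some $\beta$ with $\nc\beta\in\Gamma$, so $X=\mathcal N^\Phi_\lambda(\beta)$. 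It then suffices to show $\vdash_\lambda\beta\to\alpha$ in the appropriate relativized sense, so that by $E$-monotonicity (and {\bf pullout} to carry along the universal context) $\nc\alpha\in\Gamma$. To get this, I would argue: if not $\vdash_\lambda \beta\wedge\nl\beta\wedge\nA\gamma\to\alpha$ where $\nA\gamma$ abbreviates $\bigwedge\Gamma^\lambda$, then $\{\beta\wedge\nl\beta,\neg\alpha\}\cup\Gamma^\lambda$ is $\lambda$-consistent, extend to a type $\Delta$; then $\Delta\in\mathcal N^\Phi_\lambda(\beta)=X$ but $\neg\alpha\in\Delta$, so by IH $\Delta\notin\truth\alpha{}$, contradicting $X\subseteq\truth\alpha{}$. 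Hence $\beta\wedge\nl\beta,\nA\gamma\vdash_\lambda\alpha$, and Lemma~\ref{verybasic}.1 gives $\nc\beta,\nA\gamma\vdash_\lambda\nc\alpha$, so $\nc\alpha\in\Gamma$ since $\nc\beta\in\Gamma$ and $\Gamma\supseteq\Gamma^\lambda$.

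The main obstacle I anticipate is bookkeeping around the universal context: making sure that in each ``back'' step the relevant $\lambda$-consistent set is genuinely a subset of (the deductive closure of) $\Gamma$'s content, which requires the observation—already noted before Lemma~\ref{cons}—that $\bigwedge\Gamma^\lambda\leftrightarrow\nA\bigwedge\Gamma^\lambda$ uniformly over $\lambda$, so that $\Gamma^\lambda$ and its consequences sit inside every type of $\mathrm{type}^A_\lambda(\Phi)$. A secondary point is that for the uniform logics one must check the evidence/belief/plausibility relations are constant across $W^\Phi_\lambda$ (which is Lemma~\ref{isunif}.1) so that the semantic clauses evaluated at $\Gamma$ really do only see data determined by $\Phi^\lambda$; but this has already been established, so the induction itself is then uniform across all four cases with only the $\nc$-case doing real work.
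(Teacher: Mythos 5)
Your proposal is correct and takes essentially the same route as the paper's proof: the same induction with the box directions immediate from the definition of $\mo M_\lambda(\Phi)$, and the backward directions resting on the same syntactic machinery (your direct consistency argument plus Lemma \ref{verybasic}.1 for the $\nc$ case is just the contrapositive packaging of the paper's use of Lemma \ref{boxdia}.1, which itself goes through Lemma \ref{cons}.1). The only blemishes are cosmetic: in the $\nB$ case you wrote $\ps\neg\psi$ where you mean $\pB\neg\psi$, and the paper simply omits the $\nA$ case you spell out.
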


\begin{proof} The proof is by induction on the structure of $\psi$.   The only interesting cases are the modalities $\nc$, $\nB$ and $\nl$ (and their duals).  

Suppose first that $\nc\psi\in \Gamma$, and let $X=\mathcal N^{\Phi}_{\lambda}(\psi)$. By definition, we have  $\psi\in \Theta$ for every $\Theta\in X$, and by the induction hypothesis this implies that for each $\Theta\in X$, $\Theta\in \truth{\psi}{\mo{M}_\lambda(\Phi)}$.   Thus,   $X$ is a neighborhood of $\Gamma$ ($X\in E^\Phi_\lambda(\Gamma)$)  with $X\subseteq \truth{\psi}{\mo{M}_\lambda(\Phi)}$, so $\Gamma\in\truth{\nc\psi}{\mo{M}_\lambda(\Phi)}$, as desired.

Now, assume that $\ps\psi\in \Gamma$. Let $X$ be any neighborhood of $\Gamma$. Then $X= \mathcal N^{\Phi}_{\lambda}(\alpha)$ for some $\alpha$. Hence, $\nc\alpha\in\Gamma$ and $\ps\psi\in\Gamma$.  By Lemma \ref{boxdia}.1, there is $\Theta\in \mathcal N^{\Phi}_{\lambda}(\alpha)$ with $\psi\in\Theta$. By the induction hypothesis, $\Theta\in \truth{\psi}{\mo{M}_\lambda(\Phi)}$, and,  since $X$ was arbitrary, we conclude that $\Gamma\in\truth{\ps\psi}{\mo{M}_\lambda(\Phi)}$.

Let us now consider $\nB\psi\in\Gamma$.  Suppose that $\Gamma \mathrel B_\lambda^\Phi\Delta$.   Then, by definition $\psi\in\Delta$, which by the induction hypothesis implies $\Delta\in \truth{\psi}{\mo{M}_\lambda(\Phi)}$.   Thus, $\Gamma\in \truth{\nB\psi}{\mo{M}_\lambda(\Phi)}$

Assume next that  $\pB\psi\in \Gamma$.  By  Lemma \ref{boxdia}.3, $\psi\in\Theta$ for some $\Theta\in \mathrm{type}^B_\lambda(\Gamma)$. But then $\Gamma \mathrel B_\lambda^\Phi\Theta$, and thus $\Gamma\in\truth{\pB\psi}{\mo{M}_\lambda(\Phi)}$.

The cases for $\nl\psi,\pl\psi$ are similar and we skip them.
 \end{proof}

We recall that for $\mathcal M^\triangledown_{\flat u}$ to be concise, we require that $\mathcal M$ be concise {\em and} finite. Thus we need to produce finite models; we shall do so by a filtration argument, which requires only a slight modification from the standard construction.

\begin{definition}
Given a $\lambda$-type $\Phi$ and a formula $\varphi$, we define $\mathcal M_\lambda(\Phi)/\varphi=\langle W^{\Phi/\varphi}_\lambda,E^{\Phi/\varphi}_\lambda, B^{\Phi/\varphi}_\lambda,\peq^{\Phi/\varphi}_\lambda, V^{\Phi/\varphi}_\lambda\rangle$ as follows:
\begin{enumerate}
\item Write $\Gamma\equiv_\varphi\Delta$ if
\begin{enumerate}
\item for every subformula $\psi$ of $\varphi$, $\psi\in \Gamma$ if and only if $\psi\in \Delta$, and
\item $\Gamma$ is $\peq^\Phi_\lambda$-maximal if and only if $\Delta$ is.
\end{enumerate} 
Then, let $W^{\Phi/\varphi}_\lambda$ be the set of equivalence classes of ${\rm type}^A(\Phi)$ modulo $\equiv_\varphi$. We denote the equivalence class of $\Gamma$ by $[\Gamma]_\varphi.$
\item For $X=B,\peq$, $[\Gamma]_\varphi\mathrel X^{\Phi/\varphi}_\lambda[\Delta]_\varphi$ if and only if there are $\Gamma'\equiv_\varphi\Gamma$ and $\Delta'\equiv_\varphi\Delta$ with $\Gamma'\mathrel X^\Phi_\lambda\Delta'$.

\item $[\Gamma]_\varphi\mathrel E^{\Phi/\varphi}_\lambda X$ if there is $\Gamma'\equiv_\varphi\Gamma$ and $Y\in E^\Phi_\lambda(\Gamma)$ such that $X=[Y]_\phi=\{[\Delta]_\varphi:\Delta\in Y\}$.

\item If $p$ is a subformula of $\varphi$ then $V^{\Phi/\varphi}_\lambda(p)=\{[\Gamma]_\varphi:p\in \Gamma\}$; otherwise, $V^{\Phi/\varphi}_\lambda(p)=\varnothing$.
\end{enumerate}
\end{definition}

The following is standard and we present it withouth proof:

\begin{lemma}\label{filtertruth}
If $\psi$ is a subformula of $\varphi$, then $\Gamma\in \lb\psi\rb_{\mathcal M_\lambda(\Phi)}$ if and only if $[\Gamma]_\varphi\in\lb\psi\rb_{\mathcal M_\lambda(\Phi)/\varphi}$.
\end{lemma}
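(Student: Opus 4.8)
The plan is to prove Lemma~\ref{filtertruth} by the usual induction on the structure of the subformula $\psi$ of $\varphi$, appealing to the defining clauses of $\mathcal M_\lambda(\Phi)/\varphi$ and, crucially, to the Truth Lemma (Proposition~\ref{truth}) for the base cases and to keep the argument honest about which formulas actually appear in the filtration. Since the valuation $V^{\Phi/\varphi}_\lambda$ is defined to make atomic subformulas behave correctly, the atomic case is immediate, and the Boolean cases $\neg$ and $\wedge$ are routine because $\equiv_\varphi$ respects membership of all subformulas of $\varphi$, and $\neg\chi$, $\chi\wedge\theta$ being subformulas of $\varphi$ forces $\chi,\theta$ to be as well.

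For the modal cases one argues as in a standard filtration proof, but separately for $\nc$, $\nB$ and $\nl$. Take $\nB\psi$ a subformula of $\varphi$; I would show $[\Gamma]_\varphi\in\lb\nB\psi\rb_{\mathcal M_\lambda(\Phi)/\varphi}$ iff $\Gamma\in\lb\nB\psi\rb_{\mathcal M_\lambda(\Phi)}$. The ``only if'' direction is the easy half: if $\Gamma\in\lb\nB\psi\rb$, i.e.\ $\nB\psi\in\Gamma$ by the Truth Lemma, and $[\Gamma]_\varphi\mathrel B^{\Phi/\varphi}_\lambda[\Delta]_\varphi$, then there are $\Gamma'\equiv_\varphi\Gamma$ and $\Delta'\equiv_\varphi\Delta$ with $\Gamma'\mathrel B^\Phi_\lambda\Delta'$; since $\nB\psi$ is a subformula of $\varphi$ we get $\nB\psi\in\Gamma'$, hence $\psi\in\Delta'$, hence $\psi\in\Delta$ (again since $\psi$ is a subformula of $\varphi$), and the induction hypothesis gives $[\Delta]_\varphi\in\lb\psi\rb$. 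For the ``if'' direction, suppose $\nB\psi\notin\Gamma$; then by the Truth Lemma (via Lemma~\ref{boxdia}.2 applied to $\pB\neg\psi\in\Gamma$) there is $\Delta$ with $\Gamma\mathrel B^\Phi_\lambda\Delta$ and $\neg\psi\in\Delta$; by definition $[\Gamma]_\varphi\mathrel B^{\Phi/\varphi}_\lambda[\Delta]_\varphi$, and $[\Delta]_\varphi\notin\lb\psi\rb$ by the induction hypothesis, so $[\Gamma]_\varphi\notin\lb\nB\psi\rb$. The case of $\nl$ is handled identically, using the $\peq$-clause of the filtration and the $\peq$-part of Lemma~\ref{boxdia}. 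For $\nc\psi$: if $\nc\psi\in\Gamma$ then $\mathcal N^\Phi_\lambda(\psi)\in E^\Phi_\lambda(\Gamma)$ and $[\mathcal N^\Phi_\lambda(\psi)]_\varphi$ is a neighborhood of $[\Gamma]_\varphi$ contained in $\lb\psi\rb$ by the induction hypothesis; conversely, if $[\Gamma]_\varphi$ has a filtered neighborhood $[Y]_\varphi\subseteq\lb\psi\rb$ coming from $Y\in E^\Phi_\lambda(\Gamma)$, then every $\Delta\in Y$ has $[\Delta]_\varphi\in\lb\psi\rb$, so $\psi\in\Delta$ by the induction hypothesis, whence $Y\subseteq\lb\psi\rb_{\mathcal M_\lambda(\Phi)}$ and so $\nc\psi\in\Gamma$ by the Truth Lemma.

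I expect the main obstacle to be bookkeeping rather than a conceptual difficulty: one must be careful that the filtration of the evidence relation is ``coarse enough'' that a filtered neighborhood $[Y]_\varphi$ genuinely consists of $\equiv_\varphi$-classes each containing $\psi$ (this is where clause~3 of the filtration definition, taking $X=[Y]_\varphi$ rather than some arbitrary set of classes, matters), and that the back-and-forth clauses for $B$ and $\peq$ are stated with existential quantifiers over $\equiv_\varphi$-representatives exactly so that the filtration preserves the modal accessibility in both directions. The extra equivalence condition (b) in the definition of $\equiv_\varphi$ (that $\peq^\Phi_\lambda$-maximality be preserved) plays no role in the present lemma—it is there to make the quotient a model of the right special class later—so I would simply note that it only refines $\equiv_\varphi$ and therefore does no harm to the induction. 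Since the problem statement already flags this as ``standard'' and presented without proof in the paper, the point of writing it out is just to record that all four modal clauses go through by the schema above.
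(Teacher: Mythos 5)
Your proof is correct and is precisely the standard filtration argument that the paper invokes when it states Lemma~\ref{filtertruth} without proof: induction on subformulas of $\varphi$, the Truth Lemma (Proposition~\ref{truth}) to pass between membership in a type and truth in $\mathcal M_\lambda(\Phi)$, and the smallest-filtration clauses for $B$, $\peq$ and $E$; your remark that clause (b) of $\equiv_\varphi$ merely refines the equivalence and is harmless here is also right. The only additions worth a line are the $\nA$ case (immediate, since all types in $\mathrm{type}^A_\lambda(\Phi)$ agree on $\nA$-formulas and the filtered domain is the image of the canonical one) and, in the backward $\nc$ step, allowing the witnessing $Y$ to lie in $E^\Phi_\lambda(\Gamma')$ for a representative $\Gamma'\equiv_\varphi\Gamma$ and then transferring $\nc\psi$ from $\Gamma'$ to $\Gamma$ because $\nc\psi$ is a subformula of $\varphi$.
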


To check that filtered models preserve conciseness, we must make sure that they preserve maximal elements:

\begin{lemma}\label{maxfinlemm}
Given a $\lambda$-type $\Phi$, a formula $\varphi$ and $\Gamma\in W^\Phi_\lambda$, $[\Gamma]_\varphi$ is $\peq^{\Phi/\varphi}_\lambda$-maximal if and only if $\Gamma$ is $\peq^{\Phi}_\lambda$-maximal.
\end{lemma}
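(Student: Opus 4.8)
The plan is to prove both directions of the equivalence by exploiting the structure of the filtration, together with Lemma \ref{boxdia}.\ref{boxdia2} (the variant giving witnesses inside $\mathrm{type}^B_\lambda$) and the fact that the equivalence relation $\equiv_\varphi$ built into the filtration explicitly records $\peq^\Phi_\lambda$-maximality. First I would unwind the definition of $\peq^{\Phi/\varphi}_\lambda$: $[\Gamma]_\varphi\peq^{\Phi/\varphi}_\lambda[\Delta]_\varphi$ holds precisely when there are $\Gamma'\equiv_\varphi\Gamma$ and $\Delta'\equiv_\varphi\Delta$ with $\Gamma'\peq^\Phi_\lambda\Delta'$. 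Note also that, since $\equiv_\varphi$ is defined to preserve $\peq^\Phi_\lambda$-maximality, the statement ``$[\Gamma]_\varphi$ is maximal" only makes sense relative to $\peq^{\Phi/\varphi}_\lambda$, while ``$\Gamma$ is maximal" refers to $\peq^\Phi_\lambda$; these are a priori different notions and the lemma asserts they coincide.

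For the direction ``$\Gamma$ maximal $\Rightarrow$ $[\Gamma]_\varphi$ maximal": suppose $[\Gamma]_\varphi\peq^{\Phi/\varphi}_\lambda[\Delta]_\varphi$. Then there are $\Gamma'\equiv_\varphi\Gamma$ and $\Delta'\equiv_\varphi\Delta$ with $\Gamma'\peq^\Phi_\lambda\Delta'$. Since $\Gamma\equiv_\varphi\Gamma'$, by clause (b) of the definition of $\equiv_\varphi$ and the assumption that $\Gamma$ is maximal, $\Gamma'$ is also $\peq^\Phi_\lambda$-maximal, so $\Delta'\peq^\Phi_\lambda\Gamma'$. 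This gives $[\Delta]_\varphi\peq^{\Phi/\varphi}_\lambda[\Gamma]_\varphi$, establishing maximality of $[\Gamma]_\varphi$. This direction is essentially immediate from the construction.

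For the converse ``$[\Gamma]_\varphi$ maximal $\Rightarrow$ $\Gamma$ maximal": I would argue contrapositively. Suppose $\Gamma$ is not $\peq^\Phi_\lambda$-maximal, so there is $\Delta$ with $\Gamma\prec^\Phi_\lambda\Delta$, i.e.\ $\Gamma\peq^\Phi_\lambda\Delta$ but $\Delta\not\peq^\Phi_\lambda\Gamma$. The latter means $\Delta^{[\peq]}\not\subseteq\Gamma^{[\peq]}$, so pick $[\peq]\alpha\in\Delta$ with $[\peq]\alpha\notin\Gamma$. Passing to the filtration, $[\Gamma]_\varphi\peq^{\Phi/\varphi}_\lambda[\Delta]_\varphi$. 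If we also had $[\Delta]_\varphi\peq^{\Phi/\varphi}_\lambda[\Gamma]_\varphi$, we would get $\Delta'\equiv_\varphi\Delta$, $\Gamma'\equiv_\varphi\Gamma$ with $\Delta'\peq^\Phi_\lambda\Gamma'$ — but $[\peq]\alpha$ may not be a subformula of $\varphi$, so $\equiv_\varphi$ need not transport this witness, and here is where the main obstacle lies. To fix this, I would instead use clause (b) directly: it suffices to show that whenever $\Gamma$ is non-maximal, some (equivalently every) $\Gamma'\equiv_\varphi\Gamma$ is non-maximal, which is immediate from (b), and then show that a non-maximal $\Gamma$ yields a non-maximal $[\Gamma]_\varphi$. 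For the latter, I would appeal to Lemma \ref{boxdia}.\ref{boxdia2}-style reasoning or to Lemma \ref{isunif}: over the concise logic, $\Gamma$ non-maximal means there is some $\phi$ with $\pl\nl\phi\wedge\pl\neg\phi$-type behaviour, and one can find $\Delta\succ^\Phi_\lambda\Gamma$ that is itself $\peq^\Phi_\lambda$-maximal (the canonical model's plausibility order, being that of an intended model, has maximal points above every point by Corollary \ref{CorFlatMax} combined with Lemma \ref{isunif}). Then $[\Delta]_\varphi$ is maximal in the filtration by the already-proven direction, while $[\Gamma]_\varphi\peq^{\Phi/\varphi}_\lambda[\Delta]_\varphi$; and $[\Delta]_\varphi\neq[\Gamma]_\varphi$ because they differ on the maximality flag recorded by $\equiv_\varphi$. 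Hence $[\Gamma]_\varphi$ is not maximal. The key point making this work is precisely that clause (b) of $\equiv_\varphi$ was inserted into the filtration definition to guarantee that the maximality predicate descends correctly; the hard part is recognizing that one must route the argument through an actual maximal element $\Delta$ rather than trying to transport the defining $[\peq]\alpha$ witness directly through $\equiv_\varphi$.
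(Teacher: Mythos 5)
Your first direction is exactly the paper's argument and is fine. The converse also has the right overall shape (route through a maximal element above $\Gamma$ rather than transporting the witness $[\peq]\alpha$), but the step you lean on is not justified by the lemmas you cite. You claim that a maximal $\Delta\seq\Gamma$ exists in $\mo M_\lambda(\Phi)$ because ``the canonical model's plausibility order, being that of an intended model, has maximal points above every point by Corollary \ref{CorFlatMax} combined with Lemma \ref{isunif}''. But the canonical model is a {\em general} extended evidence model, not an intended one: $\peq^\Phi_\lambda$ is defined syntactically by $\Gamma^{[\peq]}\subseteq\Delta^{[\peq]}$ and is not the specialization order derived from $E^\Phi_\lambda$ (indeed, the whole point of the representation theorem is that canonical models must still be converted into intended ones). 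Corollary \ref{CorFlatMax} and Lemma \ref{flatisdir} are about derived plausibility relations of flat evidence models, so they simply do not apply here; moreover Lemma \ref{isunif} yields flatness only for the flat logics, while the present lemma is stated for all $\lambda$. The paper closes exactly this gap by a direct Zorn's lemma argument on types: any $\peq^\Phi_\lambda$-chain $X$ in $W^\Phi_\lambda$ has an upper bound, namely any $\lambda$-type extending $\nl\bigl(\bigcup X\bigr)^\nl$ together with $\Phi^\lambda$ (consistency holds because the chain is totally ordered, so any finite subset of these $\nl$-formulas lies in a single type of the chain, and persistence uses only the $\mathsf{S4}$ axioms for $\nl$). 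Without some such syntactic argument your converse does not go through.

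A second, smaller slip: at the end you conclude that $[\Gamma]_\varphi$ is not maximal from $[\Gamma]_\varphi\peq^{\Phi/\varphi}_\lambda[\Delta]_\varphi$ and $[\Delta]_\varphi\neq[\Gamma]_\varphi$. Since $\peq^{\Phi/\varphi}_\lambda$ is only a preorder, distinctness is not enough; you must rule out $[\Delta]_\varphi\peq^{\Phi/\varphi}_\lambda[\Gamma]_\varphi$. This is where clause (b) really earns its keep: if $\Delta'\peq^\Phi_\lambda\Gamma'$ with $\Delta'\equiv_\varphi\Delta$ and $\Gamma'\equiv_\varphi\Gamma$, then $\Delta'$ is maximal and transitivity would force $\Gamma'$ to be maximal too, contradicting the fact that no $\Gamma'\equiv_\varphi\Gamma$ is maximal. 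With that observation, and the Zorn-style existence of maximal types above $\Gamma$, your argument coincides with the paper's.
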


\begin{proof}
If $\Gamma$ is maximal and $[\Gamma]_\varphi \peq^{\Phi/\varphi}_\lambda[\Delta]_\lambda$, there are $\Gamma'\equiv_\varphi\Gamma$ and $\Delta'\equiv_\varphi\Delta$ such that $\Gamma'\peq^\Phi_\lambda\Delta'$; but then, $\Gamma'$ is maximal by definition so that $\Delta'\peq \Gamma'$ and thus $[\Delta]_\varphi \peq^{\Phi/\varphi}_\lambda[\Gamma]_\varphi$, establishing the maximality of $[\Gamma]_\varphi$.

For the other direction, it suffices to show that, given $\Gamma\in W^\Phi_\lambda$, there is $\Delta\seq\Gamma$ such that $\Delta$ is maximal; for then, if $\Gamma$ is not maximal, then no $\Gamma'\equiv_\phi\Gamma$ is maximal, whereas every $\Delta'\equiv_\phi\Delta$ is maximal, so $\Gamma'\not\seq\Delta'$ and $[\Delta]_\varphi\succ^{\Phi/\varphi}_\lambda [\Gamma]_\varphi$.

But if $X\subseteq W^{\Phi/\varphi}_\lambda$ is totally ordered, it is not hard to check that $X$ has an upper bound (pick any $\lambda$-type extending $\nl\left(\bigcup X\right)^\nl$). Thus by Zorn's lemma, the set of upper bounds of $\Gamma$ has a maximal element, as needed.
\end{proof}

With this we may prove the following:

\begin{lemma}\label{hasprop}
Given a $\flat u$-type $\Phi$ and a formula $\varphi$, $\mathcal M_{\flat u}(\Phi)/\varphi$ is finite and concise.
\end{lemma}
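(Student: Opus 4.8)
The plan is to handle finiteness directly from the definition of $\equiv_\varphi$, and then to obtain conciseness by transferring each of its defining ingredients from the canonical extended evidence model $\mo M_{\flat u}(\Phi)$ — which is concise by Lemma \ref{isunif} — across the filtration, using Lemma \ref{maxfinlemm} to keep track of $\peq$-maximal worlds. Finiteness is the easy half: an $\equiv_\varphi$-class of a $\lambda$-type is completely determined by which subformulas of $\varphi$ belong to the type together with the single extra bit recording whether the type is $\peq^\Phi_\lambda$-maximal, so $W^{\Phi/\varphi}_{\flat u}$ has at most $2^{\,k+1}$ elements, where $k$ is the number of subformulas of $\varphi$.

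For conciseness I would verify in turn the four things the definition asks for: that $\mo M_{\flat u}(\Phi)/\varphi$ is a model in the sense of Definition \ref{ev-model}, and that it is flat, uniform, and has every $\peq$-maximal world in $B$. The model constraints: $W^{\Phi/\varphi}_{\flat u}=[W^\Phi_\lambda]_\varphi$ is an evidence set because $W^\Phi_\lambda\in E^\Phi_\lambda(\Gamma)$ by Lemma \ref{nonempt}, and no evidence set is empty because $\mo M_{\flat u}(\Phi)$ is flat, so every $Y\in E^\Phi_\lambda(\Gamma)$ is non-empty and hence so is $[Y]_\varphi$; the coherence constraints between $\peq^{\Phi/\varphi}_{\flat u}$ and $E^{\Phi/\varphi}_{\flat u}$, and between $B^{\Phi/\varphi}_{\flat u}$ and $\peq^{\Phi/\varphi}_{\flat u}$, are pulled back through the filtration definitions to the corresponding facts about $\mo M_{\flat u}(\Phi)$ established in Lemma \ref{nonempt}. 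Flatness transfers because if $[\Gamma]_\varphi\mathrel E^{\Phi/\varphi}_{\flat u}X$ then $X=[Y]_\varphi$ for some $Y\in E^\Phi_\lambda(\Gamma)$, and flatness of $\mo M_{\flat u}(\Phi)$ gives $\Delta\in Y$ with $\Gamma\mathrel B^\Phi_\lambda\Delta$, so $[\Delta]_\varphi\in X$ and $[\Gamma]_\varphi\mathrel B^{\Phi/\varphi}_{\flat u}[\Delta]_\varphi$. Uniformity: constancy of $E^{\Phi/\varphi}_{\flat u}$ and $B^{\Phi/\varphi}_{\flat u}$ is inherited from constancy of $E^\Phi_\lambda$ and $B^\Phi_\lambda$, and if $[\Gamma]_\varphi\mathrel B^{\Phi/\varphi}_{\flat u}[\Delta]_\varphi$ then picking representatives $\Gamma'\mathrel B^\Phi_\lambda\Delta'$ and using conciseness of $\mo M_{\flat u}(\Phi)$ makes $\Delta'$ be $\peq^\Phi_\lambda$-maximal, so $[\Delta]_\varphi$ is $\peq^{\Phi/\varphi}_{\flat u}$-maximal by Lemma \ref{maxfinlemm}. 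Finally, if $[\Gamma]_\varphi$ is $\peq^{\Phi/\varphi}_{\flat u}$-maximal, then by Lemma \ref{maxfinlemm} $\Gamma$ is $\peq^\Phi_\lambda$-maximal, hence $\Gamma\in B^\Phi_\lambda$ by Lemma \ref{isunif}, and therefore $[\Gamma]_\varphi\in B^{\Phi/\varphi}_{\flat u}$.

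The main obstacle is the verification that the filtered structure is a genuine model: a minimal filtration does not automatically preserve transitivity of $\peq^{\Phi/\varphi}_{\flat u}$ nor the coherence of $\peq^{\Phi/\varphi}_{\flat u}$ with the evidence sets, so this step has to be carried out with a subformula set that is closed enough to track membership of the relevant $\nl$-formulas and by matching $\equiv_\varphi$-representatives carefully rather than naively. The one genuinely non-standard point — and the reason the definition of $\equiv_\varphi$ carries an extra clause about $\peq^\Phi_\lambda$-maximality — is precisely the interaction of filtration with $\peq$-maximal worlds, which is exactly what Lemma \ref{maxfinlemm} isolates and is what makes the flatness- and conciseness-related clauses survive the quotient.
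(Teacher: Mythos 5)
Your argument matches the paper's own proof essentially step for step: finiteness from the finitely many $\equiv_\varphi$-classes, flatness by picking a representative $\Delta\in Y$ with $\Gamma\mathrel B^\Phi_{\flat u}\Delta$ so that $[\Gamma]_\varphi\mathrel B^{\Phi/\varphi}_{\flat u}[\Delta]_\varphi\in[Y]_\varphi$, uniformity from the constancy of $B^\Phi_{\flat u}$ and $E^\Phi_{\flat u}$, and the maximality/belief clause via Lemma \ref{maxfinlemm} combined with the conciseness of the canonical model from Lemma \ref{isunif}. The extra obstacle you flag at the end --- verifying that the filtered structure satisfies the constraints of Definition \ref{ev-model} (transitivity of $\peq^{\Phi/\varphi}_{\flat u}$ and its coherence with the evidence sets and with $B$) --- is a genuine subtlety, but the paper's proof does not address it either (it treats the filtration as standard and checks only flatness, uniformity and the maximality clause), so relative to the paper's argument you are not missing any step.
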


\begin{proof}
The finiteness is obvious because there can only be finitely many $\equiv_\varphi$-equivalence classes. It is not too hard to see that $\mathcal M_{\flat u}(\Phi)/\varphi$ is flat, for if $X=[Y]_\varphi$ is any neighborhood of $[\Gamma]_\varphi$, by flatness we have that $\Gamma \mathrel B^\Phi_{\flat u}\Delta$ for some $\Delta\in Y$ and thus $[\Gamma]_\varphi \mathrel B^{\Phi/\varphi}_{\flat u}[\Delta]_\varphi\in X$. It is also easy to see that $B^{\Phi/\varphi}_{\flat u}$, $E^{\Phi/\varphi}_{\flat u}$ are constants given that $B^{\Phi}_{\flat u}$, $E^{\Phi}_{\flat u}$ are.

It remains to check the maximality clause. But this is immediate from Lemma \ref{maxfinlemm}, for indeed $[\Gamma]_\varphi$ is maximal if and only if $\Gamma$ is, which in turn is equivalent to $\Gamma\in B^\Phi_{\flat u}$ and thus to $[\Gamma]_\varphi\in B^{\Phi/\varphi}_{\flat u}$.
\end{proof}

\subsection{Completeness theorem}
With the work developed in previous sections, we are ready to state and prove our main completeness results.

\begin{theorem} Each evidence logic $\lambda$ over the full signature $EBA\mathord\peq$ is sound and strongly complete for the class of $\lambda$-models. Moreover,
\begin{enumerate}
\item $\logic, \logic_\flat,\logic_{\flat u}$ are sound and strongly complete for their class of evidence models,
\item $\logic_{\flat u}$ is sound and weakly complete for its class of finite evidence models.
\end{enumerate}
\end{theorem}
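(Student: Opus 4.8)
Soundness is already in hand from the earlier soundness theorem, together with the remark that every intended model is itself a $\lambda$-model, so the plan concerns only completeness, which I would prove in three layers. \emph{Layer~1 (general models).} Given a $\lambda$-consistent set $\Gamma$, extend it by the Lindenbaum lemma to a $\lambda$-type $\Phi\supseteq\Gamma$ and form the $\lambda$-canonical extended evidence model $\mo M_\lambda(\Phi)$. Lemma~\ref{nonempt} shows $\mo M_\lambda(\Phi)$ satisfies the constraints of Definition~\ref{ev-model}, Lemma~\ref{isunif} shows it is uniform, flat, or concise whenever $\lambda$ is, and the Truth Lemma (Proposition~\ref{truth}) gives $\mo M_\lambda(\Phi),\Phi\models\psi$ for every $\psi\in\Phi$, hence for every $\psi\in\Gamma$. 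Thus every $\lambda$-consistent set is satisfiable on a $\lambda$-model, which with soundness is strong completeness for the class of $\lambda$-models.

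\emph{Layer~2 (intended models).} To the model $\mo M$ produced above apply the representation theorem (Theorem~\ref{reptheo}): there is an evidence model $\mo M^\triangledown_\lambda$ with $\mo M$ a surjective $p$-morphic image of the intended model $(\mo M^\triangledown_\lambda)^\vartriangle$, and by the ``further'' clause of that theorem $(\mo M^\triangledown_\lambda)^\vartriangle$ inherits flatness when $\lambda=\logic_\flat$ and carries no constraint when $\lambda=\logic$. Since $p$-morphisms preserve truth-sets (Theorem~\ref{bistheo}), $\Gamma$ is still satisfiable on $(\mo M^\triangledown_\lambda)^\vartriangle$, an evidence model of the required class; this yields the strong-completeness claims of item~1 for $\logic$ and $\logic_\flat$. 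For $\logic_{\flat u}$ the same route works once one checks that the $\triangledown_{\flat u}$-construction preserves conciseness of the (possibly infinite) canonical model, which follows by repeating the scenario computation of Lemma~\ref{scenariolemm} over the domain $W^\triangledown_{\flat u}$ together with Lemma~\ref{mfipmax}, noting that on intended uniform models conciseness coincides with flatness plus uniformity.

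\emph{Layer~3 (finiteness).} For item~2, start from a single $\logic_{\flat u}$-consistent formula $\varphi$, extend it to a type $\Phi$, build $\mo M_{\flat u}(\Phi)$, and filter it through $\varphi$: by Lemma~\ref{hasprop} the quotient $\mo M_{\flat u}(\Phi)/\varphi$ is finite and concise, and by Lemma~\ref{filtertruth} it still satisfies $\varphi$. Feeding this finite concise model into Theorem~\ref{reptheo}(iv) and Theorem~\ref{bistheo} once more produces a finite concise intended evidence model satisfying $\varphi$. Only weak completeness is obtained because the filtration is relative to the fixed $\varphi$; and no finite-model statement can be had for $\logic$ or $\logic_u$, which lack the finite model property by Corollary~\ref{nonfin}.

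\emph{Main obstacle.} None of the three layers is hard in isolation, given the lemmas already proved; the real content is tracking the subclass constraints across the two transfers. Concretely one must be sure that the canonical model starts in the right class (Lemma~\ref{isunif}), that the $\triangledown$-representation lands in that same class and, in the concise case, keeps the model finite so that item~2 goes through (Theorem~\ref{reptheo} and Lemma~\ref{hasprop}), and that filtration preserves conciseness and the set of $\peq$-maximal worlds (Lemma~\ref{hasprop}). So the proof is essentially a careful assembly of these preservation facts in the correct order.
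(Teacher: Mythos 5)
Your Layer~1 (canonical model, Lemma~\ref{isunif}, Truth Lemma) and Layer~3 (filtration via Lemma~\ref{hasprop} and Lemma~\ref{filtertruth}, then Theorem~\ref{reptheo} on the finite concise quotient) coincide with the paper's proof. The genuine gap is in Layer~2, in your claim that for $\logic_{\flat u}$ ``the same route works'' once one checks that the $\triangledown_{\flat u}$-construction preserves conciseness of the possibly infinite canonical model. The tools you cite do not give this: Lemma~\ref{scenariolemm} is stated only for \emph{non-concise} $\lambda$, and in the proof of Theorem~\ref{reptheo} the $B$-clauses of the $p$-morphism in the concise case are obtained from Lemma~\ref{mfipmax} precisely by using finiteness (``finite models are flat''), which is why clause~(iv) of that theorem reads \emph{finite} and concise. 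For an infinite concise $\mo M$ there is no argument that $\mo M^\triangledown_{\flat u}$ is flat, hence no guarantee that $B_{\mathcal E^\triangledown_{\flat u}}$ is the set of maximal worlds or that the back-and-forth conditions for $B$ hold; your appeal to ``conciseness coincides with flatness plus uniformity on intended uniform models'' presupposes exactly the flatness that is in question. Indeed, whether $\logic_{\flat u}$ is strongly complete for its class of (concise) evidence models is explicitly listed by the paper as an open problem, so this step cannot be a routine repetition of the scenario computation.

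Accordingly, the paper's proof applies Theorem~\ref{reptheo} after the canonical-model step only for $\lambda\not=\logic_{\flat u}$ (which is how it obtains strong completeness with respect to evidence models for the non-concise logics, including the uniform logic $\logic_u$, which your Layer~2 does not mention), and it handles $\logic_{\flat u}$ exclusively through the filtration route, yielding only the weak completeness of item~2. So the correct assembly is: strong completeness for $\lambda$-models for all four logics (your Layer~1), transfer to evidence models via Theorem~\ref{reptheo} only for the logics other than $\logic_{\flat u}$, and the finite concise case by filtration; the strong-completeness claim for $\logic_{\flat u}$ over its evidence models should be dropped (or flagged as conjectural), not derived as in your Layer~2.
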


\begin{proof}
Let $\Phi$ be $\lambda$-consistent and let $\Theta$ be a $\lambda$-type extending $\Phi$. Then, let $\mo{M}_\lambda(\Theta)$ be its canonical $\lambda$-structure. By Lemma \ref{isunif}, $\mo{M}_\lambda(\Phi)$ is a $\lambda$-model, and by Proposition \ref{truth}, given $\phi\in\Theta$, $\Theta\in\truth{\phi}{\mo{M}_\lambda(\Theta)}$, so that $\Phi$ is satisfied by $\mo{M}_\lambda(\Theta)$. By Theorem \ref{reptheo}, $\mo {M}_\lambda (\Theta)_\lambda^\triangledown$ is a $\lambda$-evidence model also satisfying $\Theta$ if $\lambda\not=\logic_{\flat u}$.

The completeness of $\logic_{\flat u}$ for its evidence models requires a filtration. Let $\varphi$ be $\flat u$-consistent and $\Phi$ be a $\flat u$-type extending $\varphi$. Then, consider $\mathcal M_{\flat u}(\Phi)/\varphi$. By Lemma \ref{hasprop}, $\mathcal M_{\flat u}(\Phi)/\varphi$ is finite, concise and by Lemma \ref{filtertruth}, satisfies $\varphi$. It then follows from Theorem \ref{reptheo} that $(\mathcal M_{\flat u}(\Phi)/\varphi)^\triangledown_{\flat u}$ is a finite, concise evidence model which satisfies $\varphi$.
\end{proof}

We know that $\logic$ and $\logic_u$ do not have the finite evidence model property, but the question of whether they have finite extended models remains open. Other important questions left open are: does $\logic_\flat$ have the finite evidence model property? Is $\logic_{\flat u}$ strongly complete for its class of evidence models? We conjecture affirmative answers to these questions, but leave them for future inquiry.

\section{Dynamic extensions}\label{lang-ext}

The logical systems studied in the previous sections are interesting in their own right as they combine features of both normal and non-normal modal logics in a non-trivial way. An additional appealing feature of the systems studied here, following their original introduction in \cite{vBP-evidence}, is our fresh interpretation of neighborhood structures.   Viewing neighborhoods as bodies of evidence suggested a number of new and interesting modalities beyond the usual repertoire studied in modal neighborhood logics. We will now show briefly how this process of language elicitation can be taken further when we take a further step, toward a dynamic perspective.  

\medskip
Evidence is not a static substance that we have once and for all. It is continually affected by new incoming information, and also by processes of internal re-evaluation.    Taking a cue  from recent dynamic logics of knowledge update \cite{vDvdHK,vanbenthem-newbook} and belief revision \cite{vb-logbelrev, baltag-smets}, van Benthem and Pacuit \cite{vBP-evidence} study a number of dynamic extensions of evidence models.  They develop a  rich repertoire of dynamic logics of   ``evidence management".  We will follow one such thread here, showing the new logical issues that arise. 

 The simplest type of information change is   receiving information form an infallible source.     This operation is called {\em public announcement}   \cite{Pl:89,G:98} and it transforms a model by removing all states where the announced formula is false.   From the perspective of our evidence models, such a basic action in possible worlds models is actually a compound of various transformations. A public announcement of $\phi$ can be naturally ``deconstructed'' into a complex combination of three distinct operations: 

\begin{enumerate}
\item {\bf Evidence addition}: the agent accepts that $\phi$ is an ``admissible" piece of evidence (perhaps on par with the other available evidence).  
 
\item  {\bf Evidence removal}: the agent removes any  evidence for $\neg \phi$.  
 
\item  {\bf Evidence modification}: the agent incorporates $\phi$ into each piece of evidence gathered so far, making $\phi$ the most important piece of evidence. 

\end{enumerate}
Our richer evidence models allows us to study each of these operations individually (cf. \cite{vBP-evidence}).   In this section, we focus on just one of these: accepting an input from a trusted source, leaving other dynamic operations for further study. Let $\M$ be an evidence model  and $\phi$ a new piece of evidence which the agent decides 
 to {\em accept}.  Here ``acceptance" does not necessarily mean that the agent believes that $\phi$ is true, but rather that she agrees that $\phi$ should be considered when ``weighing" her evidence.  The formal definition of this action is straightforward: 
 
\begin{definition}[Evidence Addition]  Let $\M=\langle W, E, V\rangle$ be an evidence model, and $\phi$ a formula in $\L$.   The model $\M^{+\phi}=\langle W^{+\phi},E^{+\phi},V^{+\phi}\rangle$ has $W^{+\phi}=W$, $V^{+\phi}=V$ and for all $w\in W$, 
\begin{center} $E^{+\phi}(w)=E(w)\cup\{\truth{\phi}{\M}\}.$   \end{center}
\end{definition}
This operation can be described explicitly in our modal language with a dynamic modality  $[+\phi]\psi$ which is intended to mean ``$\psi$ is true after $\phi$ is accepted as an admissible piece of evidence".   The truth condition for this formula is straightforward:  

\begin{center} (EA)\hspace{.2in} $\M,w\models [+\phi]\psi$ iff    $\M,w\models E\phi$ implies $\M^{+\phi},w\models\psi$.  \end{center}
Here, since evidence sets cannot be empty, the precondition for evidence addition is that $\phi$ is true at some state. Compare this with the better-known precondition for public announcement which requires the  accepted formula to be true.    The logical analysis of even this simple operation is surprisingly subtle, suggesting  a number of interesting extensions to the modal languages for evidence studied in this paper.  

\medskip
Let us start with laws of reasoning for the  dynamic modalities, extending the logics that we had so far. For languages only containing the evidence and universal modality, we have the following new kind of validities, that may be viewed as ``recursion laws'' for evidence and knowledge modalities after an act of evidence addition: 

\begin{center}
\begin{tikzpicture}

\node[rectangle,  , rounded corners] {\begin{minipage}{4in}
\vspace{.1in}\begin{tabular}{lll}  
 $[+\phi]p$ & $\leftrightarrow$  & $(\langle A\rangle\phi\rightarrow p)$\hspace{.2in} ($p\in\At$)\\[0.2cm]
   $[+\phi](\psi\wedge\chi)$ & $ \leftrightarrow$ & $ ([+\phi]\psi\wedge [+\phi]\chi)$\\[0.2cm]
  $[+\phi]\neg\psi$ & $\leftrightarrow $ & $(\langle A\rangle\phi\rightarrow \neg[+\phi]\psi)$\\[0.2cm]
   $[+\phi] [E]\psi$ & $ \leftrightarrow  $ & $(E\phi\rightarrow ([E][+\phi]\psi\vee [A](\phi\rightarrow [+\phi]\psi)))$\\[0.2cm]
    $[+\phi] A\psi $ & $\leftrightarrow$& $ (\langle A\rangle \phi\rightarrow [A][+\phi]\psi)  $\\[0.2cm]

\end{tabular}

\end{minipage}};
 \end{tikzpicture}
\end{center}
But things can quickly get more complicated. Finding a similar recursion law for the belief operator of this paper requires an extension to our basic language.    This is the operator $B^\phi\psi$ of {\em conditional belief}: ``the agent believes $\psi$ conditional on $\phi$", which is well-known, but now with a new twist in our setting. Some of the agent's current evidence may be inconsistent with $\phi$ (i.e., disjoint with $\truth{\phi}{\M}$).  If one is restricting attention to situations where $\phi$ is true, then such inconsistent evidence must be ``ignored''.  Here is how we can deal with this in our models:

\begin{definition}[Relativized scenarios.]  Suppose that $X\subseteq W$.  Given a collection $\X$ of subsets of $W$ (i.e., $\X\subseteq \pow(W)$), the relativization of $\X$ to $X$ is the set $\X^X=\{Y\cap X\ |\ Y\in\X\}$.   We say that a collection $\X$ of subsets of $W$ has the  {\bf finite intersection property relative to $X$} ($X$-f.i.p.) if, for each $\{X_1,\ldots,X_n\}\subseteq \X^X$, $\bigcap_{1\le i\le n} X_i\ne\emptyset$.   We say that $\X$ is a $w$-scenario relative to $X$ when $\X\subseteq E(w)$ and   $\X$ has the {\bf maximal $X$-f.i.p.}. \end{definition}
To simplify notation, when $X$ is the truth set of formula $\phi$, we write  ``$\X^\phi$" for ``$\X^{\truth{\phi}{\M}}$" and ``$\X$ is a $\phi$-scenario (for $w$)" for ``$\X$ is a $w$-scenario relative to $\truth{\phi}{\M}$".    Now we define a natural notion of conditional belief: 
 
  \begin{itemize}  
\item $\M,w\models \nB^\phi\psi$ iff for each   $\phi$-scenario $\X\subseteq E(w)$, for each $v\in\bigcap\X^\phi$, $\M,v\models\psi$
\end{itemize}

Conditional belief is an entirely natural notion in probability and belief revision theory. Still, this notion suggests a logical investigation beyond what we have provided in this paper. First of all, strikingly, $\nB\phi\rightarrow \nB^{\psi}\phi$ is not valid.  One can compare this to the failure of monotonicity for antecedents in conditional logic.  In our more general setting which allows inconsistencies among accepted evidence, even the following variant is not valid: $\nB\phi\rightarrow (\nB^{\psi}\phi\vee \nB^{\neg\psi}\phi$).  To see this, consider an evidence model with $E(w)=\{X_1, Y_1, X_2, Y_2\}$  where the sets are defined as follows: 
 \begin{center}

\begin{tikzpicture}

\node[rectangle,draw=black!80,fill=gray!30,opacity=0.7,minimum width=1.6in,minimum height=0.4in,rounded corners] at (0.65,0) {$ $}; 
\node[rectangle,draw=black!80,fill=gray!30,opacity=0.7,minimum width=1.6in,minimum height=0.4in,rounded corners] at (2.55,0) {$ $};

\node at (-1.2,-0.8) {$X_1$}; 
\node at (4,-0.8) {$Y_1$}; 

\draw[fill]    (-0.8,0) circle (2pt); 
\node at (-0.15,0) {$\neg p,\neg q$}; 

 \draw[fill]    (1.25,0) circle (2pt); 
 \node at (1.85,0) {$p,q$}; 

\draw[fill]    (3,0) circle (2pt); 
\node at (3.7,0) {$p,\neg q$};

\node[rectangle,draw=black!80,fill=gray!30,opacity=0.7,minimum width=1.6in,minimum height=0.4in,rounded corners] at (0.65,-2) {$ $}; 
\node[rectangle,draw=black!80,fill=gray!30,opacity=0.7,minimum width=1.6in,minimum height=0.4in,rounded corners] at (2.55,-2) {$ $}; 

\node at (-1.2,-2.8) {$X_2$}; 
\node at (4,-2.8) {$Y_2$}; 

\draw[fill]    (-0.8,-2) circle (2pt); 
\node at (-0.2,-2) {$p, \neg q$}; 

 \draw[fill]    (1.25,-2) circle (2pt); 
 \node at (1.85,-2) {$\neg p, q$}; 

\draw[fill]    (3,-2) circle (2pt); 
\node at (3.7,-2) {$\neg p, \neg q$}; 
 \end{tikzpicture}

\end{center}
Then, $\M,w\models \nB q$; however,  $\M,w\not\models \nB^p q\vee \nB^{\neg p} q$.   This counter-example is interesting, as our putative implication is indeed valid on {\em connected} plausibility models for conditional belief (cf. \cite{Board} for a complete modal logic of conditional belief on such models).  
\medskip
Now in principle, our earlier axiomatization captures conditional belief as well, since there is a definition for it in terms of safe belief that arises from relativizing the earlier definition of belief in terms of safe belief and the universal modality (cf. \cite{baltag-smets-loft06,boutilier,vanbenthem-newbook}). But things get more subtle when we consider the dynamics of evidence addition once more. Clearly, to keep the total system in harmony, we must provide a recursion law, not just for pure beliefs, but also for conditional beliefs. And then the earlier static base language requires extension once more:

\medskip
 The notion of conditional belief alone does not yet allow us to state a recursion axiom.   To see this, note that $B^{\phi}\psi$ may be true at a state $w$ without having any $w$-scenarios that imply $\phi$ (i.e., there is no $w$-scenario $\X$ such that $\bigcap\X\subseteq \truth{\phi}{\M}$).    A  more general form of conditioning appropriate to this setting is $B^{\phi,\alpha} \psi $ where   ``the agent believes that $\psi$, after having settled on $\alpha$ and conditional on $\phi$".   Formally, 

\begin{itemize}
\item $\M,w\models \nB^{\phi,\psi}\chi$ iff for all maximally $\phi$-compatible sets $\X\subseteq E(w)$, if $\bigcap\X\cap\truth{\phi}{\M}\subseteq \truth{\psi}{\M}$, then $\bigcap\X\cap\truth{\phi}{\M}\subseteq\truth{\chi}{\M}$.   
\end{itemize}
Note that   $\nB^{+\phi}$ can be defined as $B^{\phi,\top}$.  We also write $\nB^{-\phi}$ for $\nB^{\top,\neg\phi}$.   With this operator added, we can   indeed state  a recursion law for our operators of absolute and conditional belief (the proofs that these axioms are valid can be found in \cite{vBP-evidence}):   

\medskip

\begin{center}
 \begin{tikzpicture}[scale=0.7]
\node[rectangle,    rounded corners] { 
\scalebox{0.9}{ \begin{tabular}{rll}  
  $[+\phi] \nB\psi $ & $\leftrightarrow$ & $  (\langle A\rangle\phi\rightarrow (\nB^{+\phi}[+\phi]\psi \wedge \nB^{-\phi} [+\phi]\psi))$ \\[0.2cm]
   
  $[+\phi] \nB^{\psi,\alpha}\chi $ & $\leftrightarrow$ & $ (\langle A\rangle\phi\rightarrow (\nB^{\phi\wedge[+\phi]\psi,[+\phi]\alpha}[+\phi] \chi\wedge \nB^{[+\phi]\psi,\neg\phi\wedge[+\phi]\alpha}[+\phi] \chi)) $\\[0.2cm]
 
\end{tabular}}};
 \end{tikzpicture}
\end{center}

\medskip

Finally, recursion laws for evidence addition and our plausibility modalities also raise interesting new issues.  Recall that a condition on  our models (Definition \ref{ev-model}) is that the evidence sets are upwards closed under the plausibility relation.     This means that   the precondition for adding a new piece of evidence to an arbitrary model is that it is upwards closed under the plausibility ordering.  Alternatively, we can introduce a new evidence addition operation $E^{\oplus X}(w)=E(w)\cup\{X'\}$ where $X'=\{u\ |\ \text{ there is a $v\in X$  such that $v\peq u$}\}$.  We leave a full analysis of this new operation for future work.

\medskip

What we have shown so far is how a logical analysis of evidence dynamics that changes given neighborhood models for agents uncovers the need for new modalities in our base language of evidence models. Clearly, there are many new questions of axiomatization resulting from this, but other kinds of issues emerge as well. We conclude with just a simple illustration.

\medskip

There is a natural issue of ``harmony'' between two perspectives on evidence dynamics. As we have seen, on evidence models $\M=\langle W, E, V\rangle$, we can   {\em derive} the plausibility ordering directly from the evidence set (see Definition \ref{derived-plaus}). Viewed in this way, an addition to   the agent's evidence also changes the agent's plausibility ordering.  But this means that we can also look at evidence change in terms of transformations of models of a more familiar relational kind, described in Section \ref{derived}: namely, plausibility orderings of epistemic ranges. Belief change on models of the latter kind has been studied extensively: cf. \cite{baltag-smets,vanbenthem-newbook,fenrong-book}, and this suggests a parallel line of inquiry  finding dynamic operations on plausibility orderings matching changes in the agent's evidence.    

For evidence addition, the corresponding technical question is which operation makes the following diagram commute:  

\begin{center}\begin{tikzpicture}

\node (E1) at (0,1.5) {$\E$};
\node (preceq1) at (0,0) {$\peq_E$}; 
\node (E2) at (2,1.5) {$\E^+$};
\node (preceq2) at (2,0) {$\peq^{+}$}; 

\path[->,draw,densely dashed,rounded corners] (E1) to (preceq1); 
\path[->,draw,densely dashed,rounded corners] (E2) to (preceq2);

\path[->,double,draw] (E1) to node[above] {\footnotesize $+X$} (E2);
\path[->,double,draw] (preceq1) to node[below] {\footnotesize $??$} (preceq2);
\end{tikzpicture}
\end{center}
 And the answer is not hard to find. Suppose that $\peq^{+}=\peq - \{(w,v)\ |\ w\in X\text{ and } v\not\in X\}$.  Then, 
$$\peq^+=\peq_{\E^+}$$
This simple observation is in line with the study of ``two-level models'' for preference dynamics in \cite{fenrong-book}, to which we refer for further relevant results. The extent of these harmony phenomena remains to be investigated, where we expect our earlier distinction between general and intended models, suitably generalized to the dynamic setting, to be important once more.

\bigskip

Our  discussion of the dynamics of evidence change shows one further way in which neighborhood structures are a good vehicle for exploring finer epistemic and doxastic distinctions than those found in standard relational models for modal logic.  At the same time, this richness means that new techniques may be needed in the logical analysis of this richer form of neighborhood semantics. Model-theoretically, we need stronger notions of bisimulation and {$p$}-morphism matching the more expressive languages that emerged in the formulation of dynamic recursion laws, while proof-theoretically, we need to lift our earlier completeness technique to this richer setting.

\section{Concluding remarks}\label{conclusion}

Starting from earlier work on evidence logics in neighborhood models, this paper has offered two main new contributions. First, our completeness theorems contribute  to the general study of basic evidence logic and its dynamics. Second, in doing so, we develop a new perspective on neighborhood models that suggests extensions of the usual systems, even without any specific evidence interest in mind.  For the modal logician, the pleasant surprise of our analysis is that there is a lot of well-motivated new modal structure to be explored on models that are often considered totally explored.

Clearly, many open problems remain. These start at the base level of the motivations for our framework, touched upon lightly earlier on. For instance, our framework still needs to be related to other modal logics of evidence \cite{shafer,halpern-pucella} and argumentation \cite{davide}. Closer to what we have presented, however, here are a few more specific avenues for future research:

\begin{itemize}

\item {\em Further interpretations}:  By imposing additional constraints on the evidence relations (i.e., the neighborhood functions), we get evidence models that are topological spaces.  Can we also give a spatial interpretation to our belief operator and the new matching modalities discussed in Section \ref{lang-ext}? This would suggest new, richer  modal logics for reasoning about topological spaces (cf. \cite{spatial-logic}).

\item {\em Computational complexity}:  Neighborhood logics are often NP-complete, while basic modal logics on relational models are often Pspace-complete. What about mixtures of the two?  

\item {\em Extended model theory}:  Combining existing notions of bisimulation for relational models and neighborhood models takes us only so far. What new notions of bisimulation directly on evidence models will match our extended modal languages?  And in this setting, can the key representation method of this paper (Theorem \ref{reptheo}) be extended to deal with such richer languages?

\item {\em Extended proof theory}:  How can we axiomatize the richer modal logic of neighborhood models that arises in our dynamic analysis? Will the techniques of our completeness proofs carry over, or do we need a new style of analysis of canonical models?

\end{itemize}

Once again, we end with Sergei Artemov's work on justification logics. As we have said in the Introduction, our evidence models provide a coarser level of analysis somewhere above the syntactic detail of proof terms.  In conclusion, we briefly highlight two further comparisons.      In an evidence model,  the agent has evidence for $\phi$ provided there is a proposition (i.e., a set of states)  that the agent has identified as evidence which  logically implies $\phi$.  An important feature of justification logic is that a {\em proof term} not only records the fact that certain beliefs are grounded in evidence, but also the  proof of why that evidence justifies a particular belief.   A second point of comparison is that our  notion of evidence   is weaker than what is found in the justification logic literature.   Having a justification for $\phi$ means, in particular, that the agent believes that $\phi$ is true.   Contrast this with our setting in which an agent only believes what is implied by {\em all} of her evidence.    Clearly,  this only scratches the surface and there is much more to say about the connection with justification logic, but this will be left for future work.

\bibliographystyle{plain}

\begin{thebibliography}{10}

\bibitem{ARS}
Hajnal Andreka, Mark Ryan, and Pierre~Yves Schobbens.
\newblock Operators and laws for combining preference relations.
\newblock {\em Journal of Logic and Computation}, 12(1):13--53, 2002.

\bibitem{artemov:2005}
S.~Artemov and E.~Nogina.
\newblock Introducing justification into epistemic logic.
\newblock {\em Journal of Logic and Computation}, 15(6):1059--1073, 2005.

\bibitem{artemov}
Sergei Artemov.
\newblock Explicit provability and constructive semantics.
\newblock {\em The Bulletin of Symbolic Logic}, 7(1):1 -- 36, 2001.

\bibitem{sep-logic-justification}
Sergei Artemov and Melvin Fitting.
\newblock Justification logic.
\newblock In Edward~N. Zalta, editor, {\em The Stanford Encyclopedia of
  Philosophy}. Fall 2012 edition, 2012.

\bibitem{baltag-smets}
A.~Baltag and S.~Smets.
\newblock Conditional doxastic models: A qualitative approach to dynamic belief
  revision.
\newblock In G.~Mints and R.~de~Queiroz, editors, {\em Proceedings of WOLLIC
  2006, LNCS}, volume 165, pages 5--21, 2006.

\bibitem{baltag-etal:12:jbc-wollic}
Alexandru Baltag, Bryan Renne, and Sonja Smets.
\newblock The logic of justified belief change, soft evidence and defeasible
  knowledge.
\newblock In L.~Ong and R.~{de Queiroz}, editors, {\em Proceedings of the 19th
  Workshop on Logic, Language, Information, and Computation ({WoLLIC~2012})},
  volume 7456 of {\em Lecture Notes in Computer Science}, pages 168 -- 190.
  Springer-Verlag Berlin Heidelberg, 2012.

\bibitem{baltag-smets-loft06}
Alexandru Baltag and Sonja Smets.
\newblock Dynamic belief revision over multi-agent plausibility models.
\newblock In G.~Bonanno, W.~{van der Hoek}, and M.~Wooldridge, editors, {\em
  Proceedings of the 7th Conference on Logic and the Foundations of Game and
  Decision ({LOFT 2006})}, pages 11--24, 2006.

\bibitem{Board}
Oliver Board.
\newblock Dynamic interactive epistemology.
\newblock {\em Games and Economic Behavior}, 49:49--80, 2004.

\bibitem{boutilier}
Craig Boutilier.
\newblock {\em Conditional Logics for Default Reasoning and Belief Revision}.
\newblock PhD thesis, University of Toronto, 1992.

\bibitem{fitting-jl}
Melving Fitting.
\newblock The logic of proofs, semantically.
\newblock {\em Annals of Pure and Applied Logic}, 132:1 -- 25, 2005.

\bibitem{gardenfors}
Peter G\"{a}rdenfors.
\newblock {\em Knowledge in Flux: Modeling the Dynamics of Epistemic States}.
\newblock Bradford Books, {MIT} Press, 1988.

\bibitem{G:98}
J.~Gerbrandy.
\newblock {\em Bisimulations on Planet Kripke}.
\newblock PhD thesis, Institute for Logic, Language and Computation
  ({DS}-1999-01), 1999.

\bibitem{girard}
Patrick Girard.
\newblock {\em Modal Logic for Belief and Preference Change}.
\newblock PhD thesis, {ILLC} University of Amsterdam Dissertation Series
  {DS-2008-04}, 2008.

\bibitem{davide}
Davide Grossi.
\newblock On the logic of argumentation theory.
\newblock In W.~{van der Hoek}, G.~Kaminka, Y.~Lesperance, M.~Luck, and
  S.~Sandip, editors, {\em Proceedings of the 9th International Conference on
  Autonomous Agents and Multiagent Systems ({AAMAS'10})}, pages 409--416, 2010.

\bibitem{halpern-pucella}
Joe Halpern and Ricardo Pucella.
\newblock A logic for reasoning about evidence.
\newblock {\em Journal of {AI} Research}, 26:1--34, 2006.

\bibitem{hansen-thesis}
Helle~Hvid Hansen.
\newblock Monotonic modal logic.
\newblock Master's thesis, Universiteit van Amsterdam ({ILLC} technical report:
  PP-2003-24), 2003.

\bibitem{helle-clemens-ep}
Helle~Hvid Hansen, Clemens Kupke, and Eric Pacuit.
\newblock Neighbourhood structures: Bisimilarity and basic model theory.
\newblock {\em Logical Methods in Computer Science}, 5(2):1--38, 2009.

\bibitem{horty}
John~F. Horty.
\newblock Reasons as defaults.
\newblock {\em Philosophers' Imprint}, 7(3):1--28, 2007.

\bibitem{kratzer}
Angelika Kratzer.
\newblock What {\em must} and {\em can} must and can mean.
\newblock {\em Linguistics and Philosophy}, 1:337--355, 1977.

\bibitem{list-dietrich}
Christian List and Franz Dietrich.
\newblock Reasons for (prior) belief in bayesian epistemology.
\newblock Unpublished manuscript, 2012.

\bibitem{fenrong-book}
Fenrong Liu.
\newblock {\em Reasoning about Preference Dynamics}, volume 354 of {\em
  Synthese Library}.
\newblock Springer-Verlag, 2008.

\bibitem{pacuit-nbhd}
Eric Pacuit.
\newblock Neighborhood semantics for modal logic: An introduction.
\newblock 2007.

\bibitem{P:89}
J.~Plaza.
\newblock Logics of public communications.
\newblock {\em Synthese: Knowledge, Rationality, and Action}, 158(2):165--179,
  2007.

\bibitem{Pl:89}
Jan Plaza.
\newblock Logics of public communications.
\newblock In M.~L. Emrich, M.~S. Pfeifer, M.~Hadzikadic, and Z.W. Ras, editors,
  {\em Proceedings, 4th International Symposium on Methodologies for
  Intelligent Systems}, pages 201--216 (republished as \cite{P:89}), 1989.

\bibitem{rott}
Hans Rott.
\newblock {\em Change, Choice and Inference: A Study in Belief Revision and
  Nonmonotonic Reasoning}.
\newblock Oxford University Press, 2001.

\bibitem{Segerberg}
Krister Segerberg.
\newblock {\em An Essay in Classical Modal Logic}.
\newblock Filosofisska Stuier, Uppsala Universitet, 1971.

\bibitem{shafer}
Glen Shafer.
\newblock {\em A Mathematical Theory of Evidence}.
\newblock Princeton University Press, 1976.

\bibitem{masbook}
Yoav Shoham and Kevin Leyton-Brown.
\newblock {\em Multiagent Systems: Algorithmic, Game-Theoretic, and Logical
  Foundations}.
\newblock Cambridge University Press, 2009.

\bibitem{stalnaker-bi}
Robert Stalnaker.
\newblock Knowledge, belief and counterfactual reasoning in games.
\newblock {\em Economics and Philosophy}, 12(02):133--163, 1996.

\bibitem{stalnaker-logknowbel}
Robert Stalnaker.
\newblock On logics of knowledge and belief.
\newblock {\em Philosophical Studies}, 128(1):169--199, 2006.

\bibitem{su1}
Kaile Su, Abdul Sattar, Guido Governatori, and Qingliang Chen.
\newblock A computationally grounded logic of knowledge, belief and certainty.
\newblock In {\em Proceedings of the fourth international joint conference on
  autonomous agents and multiagent systems}, AAMAS '05, pages 149--156, 2005.

\bibitem{vb-logbelrev}
Johan {van Benthem}.
\newblock Dynamic logic for belief revision.
\newblock {\em Journal of Applied Non-Classical Logics}, 14(2):129--155, 2004.

\bibitem{vanbenthem-newbook}
Johan {van Benthem}.
\newblock {\em Logical Dynamics of Information and Interaction}.
\newblock Cambridge University Press, 2011.

\bibitem{spatial-logic}
Johan {van Benthem} and Guram Bezhanishvili.
\newblock Modal logics of space.
\newblock In M.~Aiello, I.~Pratt-Hartmann, and J.~{van Benthem}, editors, {\em
  Handbook of Spatial Reasoning}, pages 217--298, 2007.

\bibitem{vBFP-aiml}
Johan {van Benthem}, David Fern\'andez-Duque, and Eric Pacuit.
\newblock Evidence logic: A new look at neighborhood structures.
\newblock In {\em Advances in Modal Logic}. College Publications, 2012.

\bibitem{vBP-evidence}
Johan van Benthem and Eric Pacuit.
\newblock Dynamic logics of evidence-based beliefs.
\newblock {\em Studia Logica}, 99(1-3):61--92, 2011.

\bibitem{johan-fer-synthese}
Johan {van Benthem} and Fernando~Raymundo Vel\'azquez-Quesada.
\newblock The dynamics of awareness.
\newblock {\em Synthese}, 177:5--27, 2010.

\bibitem{vDvdHK}
H.~van Ditmarsch, W.~van~der Hoek, and B.~Kooi.
\newblock {\em Dynamic Epistemic Logic}.
\newblock Synthese Library. Springer, 2007.

\bibitem{veltman}
Frank Veltman.
\newblock Prejudices, presuppositions and the theory of conditionals.
\newblock In Jeroen Groenendijk and Martin Stokhof, editors, {\em Amsterdam
  Papers in Formal Grammar}, pages 248--281. University of Amsterdam Press,
  1976.

\end{thebibliography}

\end{document}